\let\I\undefined
\let\D\undefined
\def\addtoend#1#2{%
  \expandafter\addtoend@#1{#2}{#1}}
\def\addtoend@#1\eqlist#2#3{\def#3{#1{#2}\eqlist}}
\def\addtobegin#1#2{% 
  \expandafter\addtobegin@#1\endaddtobegin@{#2}{#1}}
\def\addtobegin@\bqlist#1\endaddtobegin@#2#3{\def#3{\bqlist{#2}#1}}
\def\addtolist@#1\endaddtolist@#2{\addtoend#2{#1}}
\def\addtocar#1#2{%
  \lisp@save\aux@
  \getfirst#1\aux@
  \expandafter\addtocar@\aux@#2\endaddtocar@#1
  \lisp@restore}
\def\addtocar@#1\endaddtocar@#2{\addtobegin#2{#1}}
\def\appendto#1#2{\expandafter\appendto@#1#2#1}
\def\appendto@#1\eqlist#2#3{\expandafter\appendto@@#2\endappendto@@{#1}#3}
\def\appendto@@\bqlist#1\endappendto@@#2#3{\def#3{#2#1}}
\def\newlist#1{\def#1{\bqlist\eqlist}}
\newlist\lisp@save@list
\def\lisp@savenum{0}
\def\inc#1{\lisp@count=#1\relax\advance\lisp@count by1\relax
  \edef#1{\number\lisp@count}}
\def\ginc#1{\lisp@count=#1\relax\advance\lisp@count by1\relax
  \xdef#1{\number\lisp@count}}
\def\dec#1{\lisp@count=#1\relax\advance\lisp@count by-1\relax
  \edef#1{\number\lisp@count}}
\def\add#1#2{\lisp@count=#1\relax\advance\lisp@count by #2\relax
  \edef#1{\number\lisp@count}}
\def\sub#1#2{\lisp@count=#1\relax\advance\lisp@count by -#2\relax
  \edef#1{\number\lisp@count}}
\def\lisp@save#1{%
  \inc\lisp@savenum
  \expandafter\lisp@save@\lisp@savenum\relax#1%
}
\def\lisp@save@#1\relax{%
  \expandafter\lisp@save@@\csname lisp@save@listitem#1\endcsname
}
\def\lisp@save@@#1#2{%
  \let#1=#2%
  \addtobegin\lisp@save@list{#2#1}%
}
\def\lisp@restore{%
  \mapfirst\lisp@save@list\lisp@let@
  \dec\lisp@savenum
}
\def\lisp@let@#1{\let#1}
\def\getfirst#1#2{\lisp@ifemptylist#1{\def#2{}}%
  {\expandafter\getfirst@#1\endgetfirst@#1#2}}
\def\getfirst@\bqlist#1#2\endgetfirst@#3#4{\def#3{\bqlist#2}\def#4{#1}}
\def\mapfirst#1#2{%
  \lisp@ifemptylist#1\relax{%
    \expandafter\mapfirst@#1\endmapfirst@#1#2%
  }%
}
\def\mapfirst@\bqlist#1#2\endmapfirst@#3#4{%
  \def#3{\bqlist#2}%
  #4{#1}%
}
\def\bqlist{\bqlist}
\def\eqlist{\eqlist}
\def\deflisp@ifdummy#1{%
  \expandafter\deflisp@ifdummy@\csname lisp@ifdummy#1\endcsname{#1}}
\def\deflisp@ifdummy@#1#2{%
  \expandafter\deflisp@ifdummy@@\csname #2\endcsname#1}
\def\deflisp@ifdummy@@#1#2{\def#2{%%\def\lisp@dummy{##1}%
    \ifx\lisp@dummy#1\expandafter\@firstoftwo
    \else\expandafter\@secondoftwo\fi}}
\def\lisp@ifempty#1{\let\lisp@dummy#1\lisp@ifdummyempty}
\def\lisp@ifemptylist#1{\expandafter\list@ifempty@#1\endlisp@ifempty}
\def\list@ifempty@\bqlist#1\endlisp@ifempty{\def\lisp@dummy{#1}%
  \lisp@ifdummyeqlist}
\def\@ifnum#1{%
  \ifnum#1\relax\expandafter\@firstoftwo\else\expandafter\@secondoftwo\fi
}
\def\isparam#1{\isparamm#1\relax}
\def\isparamm{\@ifnextchar{\bgroup}}
\def\delimht#1{{%
    \let\\\relax
    \let\underbrace\dummyub
    \let\overbrace\dummyob
    \vphantom{\replace@tab#1&\endreplace@tab}}}%
\long\def\replace@tab#1&#2\endreplace@tab{%
  \@ifempty{#2}{#1}{\replace@tab#1#2\endreplace@tab}}
\long\def\replace@vline#1|#2\endreplace@vline{%
  \@ifempty{#2}{#1}{\replace@vline#1\felt#2\endreplace@vline}}
\let\felt=|
\let\endreplace@vline=\relax
\let\replace@@vline=\relax
\def\replace@@@vline#1\endreplace@vline{\replace@vline#1|\endreplace@vline}
\def\delimlr#1#2#3{%%
  \lisp@save\felt
  \def\felt{\left|\delimht{#2}\right.}%
  \left#1\delimht{#2}\hskip-2\nulldelimiterspace\relax\right.\relax%
  \replace@@vline\relax#2\endreplace@vline
  \left.\hskip-2\nulldelimiterspace\relax\delimht{#2}\right#3%
  \lisp@restore}
\def\newdelim#1#2#3{\def#1##1{\@ifnextchar{*}{\@tempswatrue}{\@tempswafalse}\delimlr#2{##1}#3}}
\def\newdelim#1#2#3{\def#1{\@ifnextchar{*}{\delimlr@star#2#3}{\delimlr@#2#3}}}
\def\delimlr@#1#2#3{\delimlr#1{#3}#2}
\def\delimlr@star#1#2*#3{#1#3#2}
\def\dummyub#1_#2{#1}
\def\dummyob#1^#2{#1}
\def\ito/{It\^o}
\def\PEzjel#1{%
  \lisp@save\replace@@vline
  \let\replace@@vline=\replace@@@vline
  \delimlr{(}{#1}{)}%
  \lisp@restore}
\newdelim\bra[]
\let\event=\cbr
\let\smallset=\cbr
\def\set@internal#1#2#3{#1{#2\,:\,#3}}
\def\set{\@ifstar{\set@internal{\cbr*}}{\set@internal\cbr}}
\let\sign=\sgn
\let\br\bra
\def\eps{\varepsilon}
\def\io{\text{ i.o.}}
\def\Isymb{{\mathbb 1}}
\newcommand{\I}[1][]{\Isymb
  \@ifempty{#1}{\isparamm\subscr\subnozjel}{\subnozjel{#1}}} 
\def\subscr#1{_{(#1)}}
\def\subnozjel#1{_{#1}}
\def\real{{\mathbb R}}
\def\Z{{\mathbb Z}}
\def\B{{\mathcal B}}
\def\F{{\mathcal F}}
\def\G{{\mathcal G}}
\def\cC{{\mathcal C}}
\def\dtilde#1{ \breve{#1}}%\tilde\tilde{#1}}
\def\zfrac#1#2{\zjel{\frac{#1}{#2}}}
\def\lT{\mathbf{T}}
\def\PEfont{}
\newcommand{\PE}[1][]{\PEfont{\Pe}_{#1}%
  \@ifnextchar^{\xPE}{\isparam\PEzjel}}
\def\xPE^#1{^{#1}\isparam\PEzjel}
\renewcommand{\E}{\def\Pe{E}\PE}
\renewcommand{\P}{\def\Pe{P}\PE}
\newcommand{\D}{\def\Pe{D}\PE}
\def\toby#1#2{\,{\buildrel #2\over#1}\,}
\newcommand\dto[1][\to]{\toby{#1}{d}}
\newcommand\pto[1][\to]{\toby{#1}{p}}
\newcommand\wto[1][\to]{\toby{#1}{w}}
\def\tr{\operatorname{Tr}\isparam\zjel}
\def\iff{\Leftrightarrow}
\def\@bs{\relax}
\def\stripbs#1#2\relax#3{%
  \def\next{#1}%
  \ifx\@bs\next\relax\expandafter\@firstoftwo\else\expandafter\@secondoftwo\fi
  {\def#3{#2}}{\def#3{#1#2}}%
}
\def\setbs#1#2\relax{\def\@bs{#1}}
\def\defname#1#2#3#4{%
  \lisp@save\nn
  \expandafter\stripbs\string#1\relax\nn
  \edef\nn{\csname#3\nn#4\endcsname}%
  \expandafter\def\nn#2\relax%
  \lisp@restore
}
\def\deftx#1{%
  \defname{#1}{{\tilde{#1}}}{t}{}%
}
\newcommand\defxn[3][]{%
  \expandafter\newcommand\csname#2n\endcsname[1][n]{#3^{(##1)}#1}}
\def\argtosubscript{\@ifnextchar{(}{\tosub@scr}{\relax}}
\def\tosub@scr(#1){_{#1}}
\def\levy/{L\'evy}
\newtheorem{theorem}{Theorem}
\newtheorem{lemma}[theorem]{Lemma}
\newtheorem{corollary}[theorem]{Corollary}
\newtheorem{proposition}[theorem]{Proposition}
\theoremstyle{definition}
\newtheorem{df}[theorem]{Definition}
\theoremstyle{remark}
\newtheorem*{remark*}{Remark}
\newtheorem*{remarks*}{Remarks}
\newcounter{enuma}\newcounter{enumn}\newcounter{enumr}
\newenvironment{rlist}[1][0]{%
  \begin{list}{%
      \hbox to0pt{\hss(\theenumr)}%
      \unskip\ignorespaces
    }{\usecounter{enumr}\setcounter{enumr}{#1}}%
    \def\theenumr{\roman{enumr}}%
  }%
  {%
  \end{list}%
}
\newenvironment{alist}[1][]{%
  \begin{list}{%
      \hbox to0pt{\hss(\theenuma)}%
      \unskip\ignorespaces}{\usecounter{enuma}}\def\theenuma{\alph{enuma}#1}}{\end{list}}
\newenvironment{nlist}[1][]{\begin{list}{\hbox to
      0pt{\hss(\theenumn)}\unskip\ignorespaces}{\usecounter{enumn}}\def\theenumn{\arabic{enumn}#1}}{\end{list}}
\let\PEfont\mathbf
\def\bt{{\underline{t}}}
\newcommand{\rct}[1][c]{\lisp@save\urct\def\urct{#1}\rctt}
\newcommand{\rctt}[1][t]{\rho^{\urct}_{#1}\lisp@restore}
\def\d{\mathrm{d}}
\begin{document}

\title*{Some sufficient conditions for the ergodicity of the
  \levy/ transformation}
\titlerunning{Ergodicity of the \levy/ transform}
\author{Vilmos Prokaj\thanks{The European Union and the European Social
    Fund have provided financial support to the project under the
    grant agreement no. T\'AMOP 4.2.1./B-09/1/KMR-2010-0003.}}
\institute{Vilmos Prokaj \at
  E\"otv\"os Lor\'and University, Department of Probability and Statistics,\\
  P\'azm\'any P. s\'et\'any 1/C, Budapest, H-1117 Hungary\\ 
  \email{prokaj@cs.elte.hu}} 
\maketitle

\abstract{We propose a possible way of attacking the question posed
  originally by Daniel Revuz and Marc Yor in their book published
  in 1991.\nocite{revuz-yor} They asked whether
  the \levy/ transformation of the Wiener--space is ergodic.
  Our main results are formulated in terms of a strongly stationary sequence
  of random variables  obtained by evaluating the iterated paths at
  time one. 
  Roughly speaking, this sequence has to approach zero ``sufficiently
  fast''. For example, one of our results states that if the expected hitting
  time of small neighborhoods of the origin do not grow faster than the
  inverse of the size of these sets then the \levy/ transformation is strongly
  mixing, hence ergodic. 
}

\section{Introduction}
\label{sec:1}

We work on the canonical space for continuous processes, that is, on
the set of continuous functions $\cC[0,\infty)$ equipped with the Borel
$\sigma$--field $\B (\cC[0,\infty))$ and the Wiener measure $\P$. On this
space the canonical process $\beta_t (\omega)=\omega (t)$ is a Brownian motion
and the \levy/ transformation $\lT$, given by the formula
\begin{displaymath}
  (\lT\beta)_t=\int_0^t \sign(\beta_s) \d\beta_s,
\end{displaymath}
is almost everywhere defined and preserves the measure $\P$. 
A long standing open question is the ergodicity of this transformation.
It was probably first mentioned in written form in \citet{revuz-yor} (pp. 257).
Since then there were some work on the question, see
\citet{MR1231991,MR1308559,MR2417970,MR1318194,MR1975087}. One of the 
recent deep result of Marc Malric, see \cite{malric-2010}, is the
topological recurrence of the transformation, that is, the orbit of a typical
Brownian path meets any non empty open set almost surely.
\citet{Leuridan2009} provide an alternative presentation of the proof. 

In this paper we consider mainly the strong mixing property of the
\levy/ transformation.   Our main results are formulated in terms of a
strongly stationary sequence   of random variables  defined by evaluating the
iterated paths at  time one. Put $Z_n=\min_{0\leq k<n} \abs*{(\lT^k\beta)_1}$.
We show in Theorem \ref{prop:8} that if 
\begin{equation}
  \liminf_{n\to\infty} \frac{Z_{n+1}}{Z_n}<1,\quad\text{almost surely}, \tag{$*$}
\end{equation}
then $\lT$ is strongly mixing, hence ergodic. 

We will say that a family of real
valued variables $\set{\xi_i}{i\in I}$ is tight if the family of the
probability measures  $\set{\P\circ \xi_i^{-1}}{i\in  I}$ is tight, that is if
$\sup_{i\in I} \P{\abs{\xi_i}>K}\to0$ as $K\to\infty$.   

In Theorem \ref{thm:tightness} below, we will see
that the tightness of the family $\set{nZ_n}{n\geq1}$ 
implies ($*$), in
particular if $\E{Z_n}=O(1/n)$ then the \levy/ transformation is strongly
mixing, hence ergodic. Another way of expressing the same idea, uses the
hitting time $\nu(x)=\inf\set{n\geq0}{Z_n<x}$ of the $x$-neighborhood of zero by the sequence $((\lT^k\beta)_1)_{k\geq0}$ for $x>0$. 
In the same Theorem we will see that the tightness of
$\set{x\nu(x)}{x\in(0,1)}$ is also sufficient for ($*$). In particular, if 
$\E{\nu(x)}=O(1/x)$ as $x\to0$,  that is, the expected hitting time of small
neighborhoods of the origin  do not grow faster than the inverse of the size
of these sets, then the \levy/ transformation is strongly mixing, hence
ergodic.

It is natural to compare our result with the density theorem of Marc Malric.
We obtain that to settle the question of ergodicity one should focus on
specific open sets only, 
but for those sets deeper understanding of the  hitting time is
required.

In the next section we sketch  our argument, formulating the intermediate
steps. Most of the proofs are given in Section~\ref{sec:3}. Note, that we do
not use the topological recurrence theorem of Marc Malric, instead all of our
argument is based on his density result of the zeros of the iterated paths,
see \cite{MR1975087}. This theorem states that the set 
\begin{equation}
  \label{eq:density}
  \set{t\geq0}{\exists n,\, (\lT^n\beta)_t=0}\quad
  \text{is dense in $[0,\infty)$ almost surely}.
\end{equation}
Hence the argument given below may eventually lead to an alternative proof of
the topological recurrence theorem as well. 

\section{Results and tools}
\label{sec:2}
\subsection{Integral-type transformations}
\label{sec:2.1}
Recall, that a measure preserving transformation $T$ of a probability
space $(\Omega,\B,\P)$ is ergodic, if 
\begin{displaymath}
  \lim_{n\to\infty}\frac1n \sum_{k=0}^{n-1}\P{A\cap T^{-k}B}=\P{A}\P{B}, 
  \quad\text{for $A,B\in\B$},
\end{displaymath}
and strongly mixing provided that
\begin{displaymath}
  \lim_{n\to\infty}\P{A\cap T^{-n}B}=\P{A}\P{B},\quad
  \text{for $A,B\in\B$}.
\end{displaymath}

The next theorem, whose proof is given in subsection~\ref{sec:3.2}, uses that   
ergodicity and strong mixing can be interpreted as asymptotic
independence when the base set $\Omega$ is a Polish space.
Here the special form of the \levy/ transformation and the one--dimensional
setting are not essential, hence we will use the phrase {\em integral-type}
for the transformation of the $d$--dimensional Wiener space in the form
\begin{equation}
  \label{eq:Th}
  T\beta=\int_0^. h(s,\beta)\d\beta_s
\end{equation}
where $h$ is a progressive $d\times d$-matrix valued function. 
It is measure-preserving, that is, $T\beta$ is a $d$--dimensional Brownian
motion, if and only if $h(t,\omega)$ is an orthogonal matrix $dt\times d\P$
almost everywhere, that is, $h^Th=I_d$, where $h^T$ denotes the transpose of
$h$ and $I_d$ is the identity matrix of size $d\times d$. Recall that
$\norm{a}_{HS}=\tr{aa^T}^{1/2}$ is the Hilbert-Schmidt norm of the matrix $a$.  

\begin{theorem}\label{thm:cf}
  Let $T$ be an integral-type measure-preserving transformation of the
  $d$--dimensional Wiener--space as in \eqref{eq:Th}  and denote by $X_n(t)$
  the process 
  \begin{equation}
    \label{eq:Xn def}
    X_n(t)=\int_0^t \hn(s) \d s
    \quad\text{with}\quad
    \hn(s)=h(s,T^{n-1}\beta)\cdots h(s,T\beta) h(s,\beta).
  \end{equation}
  Then
  \begin{rlist}
  \item\label{it:1} 
    $T$ is strongly mixing if and only if $ X_n(t)\pto0$ for all $t\geq 0$.
    \smallskip
  \item\label{it:2} $T$ is ergodic if and only if 
    $\displaystyle\frac1N\sum_{n=1}^N\norm{X_n(t)}_{HS}^2 \pto0$ for all
    $t\geq0$. 
  \end{rlist}  
\end{theorem}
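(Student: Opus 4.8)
The plan is to translate the measure-theoretic statements of strong mixing and ergodicity into statements about asymptotic independence of $\sigma$-fields, and then to compute the relevant conditional characteristic functions using the Itô calculus and the Gaussian structure of the Wiener space. The starting observation is that on a Polish space $\Omega$ one has $\P{A\cap T^{-n}B}\to\P{A}\P{B}$ for all $A,B\in\B$ if and only if the same holds for $A,B$ ranging over a countable generating algebra; since $\B(\cC[0,\infty))$ is generated by the coordinate evaluations, it suffices to test against events built from finitely many increments of $\beta$ against events built from finitely many increments of $T^n\beta$. The cleanest way to package this is via characteristic functions: strong mixing is equivalent to
\begin{displaymath}
  \E{e^{i\langle u,\,\beta_{t_1},\dots\rangle}\,e^{i\langle v,\,(T^n\beta)_{s_1},\dots\rangle}}
  \longrightarrow
  \E{e^{i\langle u,\dots\rangle}}\,\E{e^{i\langle v,\dots\rangle}}
\end{displaymath}
for all finite families of times and all real vectors $u,v$, and similarly (Cesàro-averaged) for ergodicity.

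Next I would condition on $\F_t$, the $\sigma$-field generated by $\beta$ up to time $t$, chosen larger than all the $t_i$ and $s_j$ appearing. The key point is that $T^n\beta$ is adapted, and that for an integral-type transformation the increment $(T^n\beta)_{b}-(T^n\beta)_{a} = \int_a^b h^{(n)}(s)\,\d\beta_s$ has a very transparent structure \emph{given the past}: writing things in terms of the process $X_n$ from \eqref{eq:Xn def} one sees that the conditional law of the vector of future increments of $T^n\beta$, given $\F_t$ together with the future of the driving Brownian motion's ``orthogonal complement'', is Gaussian with a covariance controlled by the increments of $X_n$. Concretely, decompose the future Brownian increment $\d\beta_s$ for $s>t$ into its projection along the rows of $h^{(n)}(s)$ and the complement; the nontrivial correlation between $\beta$ and $T^n\beta$ in the future is carried entirely by the matrix $X_n(t')-X_n(t)$. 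After taking conditional expectations and using the tower property, the cross term $\E{\cdots}-\E{\cdots}\E{\cdots}$ reduces to an expression that vanishes precisely when the relevant increments of $X_n$ tend to $0$ in probability — giving part \ref{it:1} — and whose Cesàro average vanishes precisely when $\frac1N\sum_{n\le N}\norm{X_n(t')-X_n(t)}_{HS}^2\to0$, giving part \ref{it:2}, where the Hilbert–Schmidt norm appears because it is the natural quadratic functional measuring the total correlation across all $d$ coordinates.

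For the converse directions one runs the argument backwards: choosing the test vectors $u,v$ and the times appropriately (for instance linearizing $e^{ix}\approx 1+ix$ for small $x$, or differentiating the characteristic-function identity), the failure of $X_n(t)\pto0$ (resp.\ of the Cesàro condition) produces a pair of events witnessing failure of strong mixing (resp.\ ergodicity). Since $X_n$ has nondecreasing increments in the PSD order and $X_n(0)=0$, it suffices to work with $X_n(t)$ itself rather than general increments, which streamlines the statement to the form given.

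The main obstacle I expect is making the conditional-independence decomposition fully rigorous: one must handle the fact that $h(s,T^{n-1}\beta),\dots$ are themselves functionals of the path and hence the ``rotation'' applied to the future noise is random and $\F_t$-dependent only through an approximation as $t$ grows, so some care is needed with the progressive measurability and with an approximation/limiting argument (a monotone class or density argument) to pass from nice cylindrical test functionals to all of $\B$. The Gaussian computation itself and the identification of the Hilbert–Schmidt norm as the correct functional are, by contrast, routine once the conditioning is set up correctly.
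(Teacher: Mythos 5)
Your high-level strategy --- recasting strong mixing and ergodicity as asymptotic independence of the pair $(\beta,T^n\beta)$ on the Polish space $\cC[0,\infty)$ and detecting that independence through joint characteristic functions, with $X_n$ as the carrier of the cross-dependence --- coincides with the paper's. But the central step of your argument, as written, does not work, and the place where it breaks is exactly the point you set aside as ``the main obstacle''. You claim that, conditionally on $\F_t$, the future increments of $(\beta,T^n\beta)$ are jointly Gaussian with covariance governed by $X_n(t')-X_n(t)$. This is false: $h^{(n)}(s)$ for $s>t$ is $\F_s$-measurable, not $\F_t$-measurable, so the rotation applied to the future noise is itself driven by that noise; the pair $(\beta,T^n\beta)$ is a continuous martingale whose cross-variation is $X_n$, but it is not conditionally Gaussian, and its joint law is not a function of $X_n$ alone. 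Consequently ``taking conditional expectations and using the tower property'' does not reduce the cross term to an expression in $X_n$. The paper avoids any conditioning: it writes the joint characteristic function as $\E{\exp{i\int_0^\infty(f+gh^{(n)})\d\beta}}$ and compares it with the product of the marginals through the exact identity $\E{M_\infty}=1$ for the uniformly integrable exponential martingale built from $i\int(f+gh^{(n)})\d\beta$; the discrepancy is then bounded by $\E{\abs{\exp{\sum_j\tr{c_jX_n(t_j)}}-1}}$, which is precisely where $X_n$ enters. That identity is the missing idea in your sketch, and the ``approximation as $t$ grows'' you propose in its place has no evident implementation, since the test times are fixed.

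The converse directions are also essentially absent from your proposal. From convergence of finite-dimensional characteristic functions one cannot ``linearize'' or ``differentiate'' to recover $X_n(t)$, which is a pathwise quadratic-covariation functional rather than a function of finitely many marginals. The paper's route is: strong mixing yields $(\beta,T^n\beta)\dto{}$ a $2d$-dimensional Brownian motion as processes (tightness is automatic), and the Jacod--Shiryaev stability theorem (if $M^{(n)}\dto M$ for continuous local martingales then $(M^{(n)},\q*{M^{(n)}})\dto(M,\q{M})$) forces $\q*{T^n\beta,\beta}_t=X_n(t)\dto0$, hence $X_n(t)\pto0$ because the limit is deterministic; the ergodic case uses a uniformly distributed random index $\kappa_n$ and a conditioning on $\sigma(\beta)$ to produce the Ces\`aro average of $\norm{X_k(t)}_{HS}^2$. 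Finally, a small but genuine error: $X_n$ does \emph{not} have nondecreasing increments in the positive-semidefinite order, since $h^{(n)}(s)$ is orthogonal rather than positive semidefinite (already in $d=1$ it equals $\pm1$); the restriction to $X_n(t)$ instead of general increments is indeed harmless, but only for the trivial reason that $X_n(t')-X_n(t)\pto0$ whenever both endpoint values do.
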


The two parts of  Theorem~\ref{thm:cf} can be proved along similar lines, see
Subsection~\ref{sec:3.2}.  
Note, that the Hilbert-Schmidt norm of an orthogonal transformation in
dimension $d$ is $\sqrt{d}$ hence by \eqref{eq:Xn def} we have the trivial
bound: $\norm{X_n(t)}_{HS}\leq t\sqrt{d}$. By this boundedness the convergence
in probability is equivalent to the convergence in $L^1$ in both parts
of Theorem \ref{thm:cf}.  

\subsection{\levy/ transformation}

Throughout this section $\bn=\beta\circ \lT^n$ denotes the $n^{th}$ iterated
path under the \levy/ transformation $\lT$. Then 
$\hn(t)=\prod_{k=0}^{n-1}\sign (\bn[k]_t)$.

By boundedness, the convergence of $X_n(t)$ in probability is the same as the
convergence in $L^2$. Writing out $X_n^2(t)$ we obtain that:
\begin{equation}
  \label{eq:Xn^2}
  X_n^2(t)=2\int_{0<u<v<t}\hn(u)\hn(v)\d u\d v.
\end{equation}
Combining \eqref{eq:Xn^2} and (\ref{it:1})  of Theorem~\ref{thm:cf} we obtain
that $\lT$ is strongly mixing provided that   
\begin{equation}\label{eq:hn 1}
  \E{\hn(s)\hn(t)}\to0, \quad\text{for almost all $0<s<t$.}
\end{equation}
By scaling, $\E{\hn_s\hn_t}$  depends only on the ratio  $s/t$, and the 
sufficient condition \eqref{eq:hn 1} is even simplifies to
\begin{displaymath}
  \E{\hn(s)\hn(1)}\to0, \quad\text{for almost every $s\in(0,1)$.}
\end{displaymath}
Since $\hn(s)\hn(1)$ takes values in $\smallset{-1,+1}$ we actually have to
show that $\P{\hn(s)\hn(1)=1}-\P{\hn(s)\hn(1)=-1}\to0$. It is quite natural to
prove this limiting relation by a kind of coupling.
In the present setting this means a transformation $S$ of the state space
$\cC[0,\infty)$ preserving the Wiener measure and mapping most of the event
$\event*{\hn(s)\hn(1)=1}$ to $\event*{\hn(s)\hn(1)=-1}$ for $n$ large. 

The transformation $S$ will be  the reflection of the path after a suitably
chosen stopping time $\tau$, i.e.,
\begin{displaymath}
  (S\beta)_t =2\beta_{t\wedge \tau}-\beta_t.
\end{displaymath}

\begin{proposition}\label{prop:1}
  Let $C>0$ and $s\in (0,1)$. If there exists a stopping time $\tau$ such that
  \begin{alist}
  \item\label{it:a} $s<\tau<1$  almost surely,
  \item\label{it:b} $\nu=\inf\set{n\geq 0}{\bn_{\tau}=0}$ is finite almost
    surely, 
  \item\label{it:c} $\abs*{\bn[k]_\tau}>C\sqrt{1-\tau}$ for $0\leq k<\nu$
    almost surely. 
  \end{alist}
  then 
  \begin{displaymath}
    \limsup_{n\to\infty} \abs{\E{\hn(s)\hn(1)}}\leq 
    \P{\sup_{t\in [0,1]}\abs{\beta}>C}
  \end{displaymath}
\end{proposition}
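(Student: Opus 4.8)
The plan is to build an explicit coupling of the Wiener measure with itself, realized by the reflection map $S\beta = 2\beta_{\cdot\wedge\tau} - \beta_\cdot$, and to show that $S$ sends most of $\{\hn(s)\hn(1)=1\}$ into $\{\hn(s)\hn(1)=-1\}$, with an exceptional set controlled by $\P\{\sup_{[0,1]}\abs\beta > C\}$. The key observation is that the \levy/ transformation commutes nicely with the reflection after $\tau$ \emph{up to the finite time $\nu$}: since $\tau < 1$ a.s.\ and $s < \tau$, the sign process $\hn(s)$ is unaffected by what the path does after $\tau$ (it depends only on $\beta$ restricted to $[0,s]\subset[0,\tau]$), whereas the value $(\lT^n\beta)_1$ depends on the whole path on $[0,1]$. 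So reflecting after $\tau$ will flip $\hn(1)$ for $n$ in a suitable range while leaving $\hn(s)$ fixed, hence flips the product.

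First I would record how $S$ interacts with $\lT$. Reflection after $\tau$ negates $\mathrm{d}\beta_s$ for $s>\tau$, and one checks by the definition $(\lT\beta)_t = \int_0^t \sign(\beta_u)\,\mathrm{d}\beta_u$ that, because $\sign$ is odd and $\tau$ is a stopping time, $\lT(S\beta)$ agrees with $S(\lT\beta)$ on $[0,1]$ \emph{provided} the reflected path does not change sign across $\tau$ at any earlier iterate — precisely what condition \ref{it:c} is designed to guarantee, via the small-ball estimate $\abs{\bn[k]_\tau} > C\sqrt{1-\tau}$ versus the Brownian fluctuation on $[\tau,1]$ of size $\sqrt{1-\tau}$. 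So I would fix the good event
\begin{displaymath}
  G_n = \event*{\tfrac{\text{the path on }[\tau,1]\text{ stays within }C\sqrt{1-\tau}\text{ of }\beta_\tau\text{ at iterates }0,\dots,\nu\wedge n}},
\end{displaymath}
show $\P{G_n^c} \le \P{\sup_{[0,1]}\abs\beta > C}$ uniformly in $n$ by scaling on the interval $[\tau,1]$ and the strong Markov / independence of the increments after $\tau$ together with \ref{it:c}, and check that on $G_n$ the map $S$ satisfies $\bn[k](S\beta)_t = (S\,\bn[k]\beta)_t$ on $[0,1]$ for all $k \le \nu\wedge n$.

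Next, the counting step. On $G_n$, for $\nu \le k \le n$ we have $\bn[k]_\tau = 0$ by the definition of $\nu$ and \ref{it:b}, and reflection at a time where the path vanishes at that iterate flips the sign of the subsequent iterate's value at $1$: more carefully, $\hn(1) = \prod_{k=0}^{n-1}\sign(\bn[k]_1)$, and $S$ reflects $\bn[k]$ after $\tau$ for $k<\nu$ but, at $k=\nu-1 \to \nu$, the passage through the zero $\bn[\nu]_\tau = 0$ means $\sign((\lT S\beta)^{(\nu)}_1)$ is the sign of $-(\bn[\nu]_1 - 0) = -\bn[\nu]_1$, while all later iterates $k \ge \nu$ see an unreflected future and are unchanged. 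Hence exactly one factor in the product defining $\hn(1)$ is negated, so $\hn(1)(S\beta) = -\hn(1)(\beta)$ on $G_n$, while $\hn(s)(S\beta) = \hn(s)(\beta)$ because $s<\tau$. Therefore, since $S$ preserves $\P$,
\begin{displaymath}
  \abs{\E{\hn(s)\hn(1)}}
  = \abs[\big]{\tfrac12\E{\hn(s)\hn(1)(\beta) + \hn(s)\hn(1)(S\beta)}}
  \le \tfrac12\,\E[\big]{\abs{\hn(s)\hn(1)(\beta) + \hn(s)\hn(1)(S\beta)}\,\I[G_n^c]}
  \le \P{G_n^c},
\end{displaymath}
using that the integrand vanishes on $G_n$ and is bounded by $2$ on $G_n^c$. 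Letting $n\to\infty$ and bounding $\P{G_n^c}$ by $\P{\sup_{[0,1]}\abs\beta > C}$ gives the claim.

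The main obstacle I expect is the commutation identity $\lT(S\beta) = S(\lT\beta)$ on $[0,1]$ and its iterates on the good event — getting the bookkeeping right for which iterates are reflected, which pass through a zero at $\tau$, and which are left untouched, and verifying that condition \ref{it:c} with the explicit constant $C\sqrt{1-\tau}$ really does prevent spurious sign changes of $\bn[k]_\cdot$ on $[\tau,1]$ for $k<\nu$. This is where the $\sqrt{1-\tau}$ scaling and a uniform bound on the oscillation of each iterated Brownian path on $[\tau,1]$ — all iterates being Brownian motions, with the oscillation on $[\tau,1]$ distributed as $\sqrt{1-\tau}$ times a fixed functional of a standard path on $[0,1]$ — have to be combined carefully, with the supremum over $k \le \nu$ absorbed into a single event of probability $\le \P{\sup_{[0,1]}\abs\beta > C}$.
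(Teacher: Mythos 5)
This is essentially the paper's own argument: the same reflection coupling $S$ after $\tau$, the same oscillation event on $[\tau,1]$ controlled by $C\sqrt{1-\tau}$ (with condition (\ref{it:c}) ensuring that the factors $k<\nu$ keep their sign at time $1$ and that only the $\nu$-th factor of $\hn(1)$ flips), the same appeal to the measure preservation of $S$, and the same passage to the limit using the almost sure finiteness of $\nu$. Two inessential slips: $\bn[k]_\tau=0$ holds only for $k=\nu$, not for all $k\ge\nu$ (for $k>\nu$ one argues instead, as the paper does, that $\bn[k]\circ S$ and $\bn[k]$ coincide on $[0,1]$ because $\bn[\nu]\circ S$ and $\bn[\nu]$ agree in absolute value), and the identity $\hn(1)\circ S=-\hn(1)$ needs $n>\nu$ in addition to the good event, so the correct bound is $\P{G_n^c}+\P{n\leq \nu}$, which still yields the stated limit.
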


One can relax the requirement that $\tau$ is a stopping time in
Proposition~\ref{prop:1}. 

\begin{proposition}\label{prop:2}
  Assume that for any $s<1$ and $C>0$ time there  exists  a random time $\tau$ 
  with properties (\ref{it:a}), (\ref{it:b}) and (\ref{it:c}) in
  Proposition~\ref{prop:1}.   

  Then there are also a stopping times with these properties
  for any $s<1$, $C>0$.
\end{proposition}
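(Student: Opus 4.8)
The plan is to produce the required stopping time as the \emph{first} time the configuration certified by a random time can occur. Fix $s\in(0,1)$ and $C>0$, choose $s'\in(s,1)$ and $C'>C$, and, using the hypothesis, let $\tau$ be a random time satisfying (\ref{it:a})--(\ref{it:c}) of Proposition~\ref{prop:1} for the pair $(s',C')$; so $s'<\tau<1$ almost surely, $\nu:=\inf\set{n\geq0}{\bn_\tau=0}<\infty$ almost surely, $\bn[\nu]_\tau=0$, and $\abs*{\bn[k]_\tau}>C'\sqrt{1-\tau}$ for $0\leq k<\nu$. For $n\geq0$ put
\begin{displaymath}
  B_n=\set{t\in[s',1)}{\bn_t=0\ \text{ and }\ \abs*{\bn[k]_t}\geq C'\sqrt{1-t}\ \text{ for }0\leq k<n},
\end{displaymath}
let $\sigma_n=\inf B_n$ (with $\inf\emptyset=+\infty$), and let $\sigma=\inf_{n\geq0}\sigma_n$; I claim $\sigma$ is a stopping time enjoying (\ref{it:a})--(\ref{it:c}) for the pair $(s,C)$.

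First, $\sigma$ is a stopping time. The \levy/ transformation is adapted, hence $\bn[k]_t$ is $\F_t$-measurable for all $k,t$, and $t\mapsto\bn[k]_t$ is continuous; so each random set $B_n$ is progressively measurable and relatively closed in $[s',1)$, being the intersection of the zero set of the continuous process $\bn$ with the closed level sets $\set{t}{\abs*{\bn[k]_t}\geq C'\sqrt{1-t}}$, $k<n$. By the d\'ebut theorem each $\sigma_n$, and hence $\sigma$, is a stopping time. Next, (\ref{it:a}) for $\sigma$: the defining properties of $\tau$ say exactly that $\tau\in B_\nu$, so $\bigcup_n B_n\neq\emptyset$ and $\sigma\leq\tau<1$ almost surely, while $\sigma\geq s'>s$ by construction; thus $s<\sigma<1$ almost surely.

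Properties (\ref{it:b}) and (\ref{it:c}) then follow \emph{provided the infimum defining $\sigma$ is almost surely attained}. Indeed, since each $B_n$ is relatively closed in $[s',1)$, whenever $B_n\neq\emptyset$ the infimum $\sigma_n$ lies in $B_n$, i.e.\ $\bn_{\sigma_n}=0$ and $\abs*{\bn[k]_{\sigma_n}}\geq C'\sqrt{1-\sigma_n}$ for $k<n$; consequently, on the event that $\sigma=\sigma_m$ for some finite $m$ we get $\nu_\sigma:=\inf\set{n\geq0}{\bn_\sigma=0}\leq m<\infty$ and, for $0\leq k<\nu_\sigma\leq m$, $\abs*{\bn[k]_\sigma}\geq C'\sqrt{1-\sigma}>C\sqrt{1-\sigma}$ (the strict inequality uses $\sigma<1$ together with $C'>C$), which is exactly (\ref{it:b}) and (\ref{it:c}) for $(s,C)$.

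So the main obstacle is to show that the infimum is almost surely attained. If it were not, there would be $t_j\in\bigcup_n B_n$ with $t_j\downarrow\sigma$, $t_j\neq\sigma$, say $t_j\in B_{n_j}$; a subsequence along which $n_j$ stays bounded would force $\sigma\in B_{n_j}$ by closedness, so $n_j\to\infty$, and letting $j\to\infty$ in $\abs*{\bn[k]_{t_j}}\geq C'\sqrt{1-t_j}$ (valid for each $k<n_j$, hence eventually for each fixed $k$) would yield $\abs*{\bn[k]_\sigma}\geq C'\sqrt{1-\sigma}>0$ for \emph{all} $k$, whence $\nu_\sigma=\infty$. Thus everything reduces to excluding, almost surely, the degenerate event that at the stopping time $\sigma$ every iterated path stays bounded away from $0$ while zeros of iterated paths of unbounded index accumulate at $\sigma$ from the right. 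One natural line of attack is to condition on $\F_\sigma$ on the almost sure event $\{\sigma<1\}$ — where the increments of $\beta$ after $\sigma$ form a fresh Brownian motion and the whole tower $(\bn[n])_{n\geq0}$ on $[\sigma,\infty)$ is determined by them and by the $\F_\sigma$-measurable starting values $(\bn[n]_\sigma)_n$ — and to combine the density theorem \eqref{eq:density} with a fresh, conditioned, application of the hypothesis at the $\F_\sigma$-measurable threshold $C'\sqrt{1-\sigma}$ on arbitrarily short time horizons after $\sigma$, in order to contradict the degenerate picture. Making this last step rigorous — in particular reconciling \eqref{eq:density}, which produces zeros but no lower bounds on the earlier iterated paths, with the side condition (\ref{it:c}) — is where I expect the real work to lie.
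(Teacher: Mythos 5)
Your construction is essentially the paper's: you take the d\'ebut of the set of ``good'' configurations (with a strictly larger constant, so that the non-strict inequalities surviving the passage to the infimum still beat $C$), observe that the hypothesis guarantees the set is nonempty and its infimum is below $1$, and reduce everything to showing that the infimum over $n$ is attained at a finite index. Up to that point the argument is correct and matches the paper's proof step for step.

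The gap is the last step, which you explicitly leave open: excluding the event that at $\sigma$ one has $\abs*{\bn[k]_\sigma}\geq C'\sqrt{1-\sigma}>0$ for \emph{every} $k$. No conditioning on $\F_\sigma$, no strong Markov property, and no fresh application of the hypothesis after $\sigma$ is needed here: this event is contained in $\event{\exists t\geq 0:\ \inf_{k\geq0}\abs*{\bn[k]_t}>0}$, which is null by Corollary~\ref{cor:11}. That corollary is a pathwise statement holding simultaneously for all $t$ on a single almost sure event (it follows from Lemma~\ref{l1} together with Malric's density theorem \eqref{eq:density}: if all iterated paths were bounded away from $0$ at some $t$, none of them could vanish in a whole neighbourhood of $t$, contradicting the density of the union of their zero sets). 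Because it is quantified over all $t$ at once, it applies at the random time $\sigma$ with no measurability or independence issues, and your ``degenerate picture'' is ruled out immediately. The difficulty you anticipate --- reconciling \eqref{eq:density} with the side condition (\ref{it:c}) --- does not arise, since the contradiction only needs the zeros, not the lower bounds.
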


For a given $s\in(0,1)$ and  $C>0$, to prove the existence of the random time
$\tau$ with the prescribed properties it is natural to consider all time
points not only time one. That is, for a given path $\bn[0]$ how large is the
random set of ``good time points'', which will be denoted by $A(C,s)$:
\begin{multline}\label{eq:Adef}
  A(C,s)=\set{t>0}{\text{exist $n,\gamma,$ such that $s t<\gamma<t$,}\\
    \text{$\bn_\gamma=0$ and 
      $\inf_{0\leq k<n}\abs*{\bn[k]_\gamma}>C\sqrt{t-\gamma}$}}. 
\end{multline}
Note that it may happen that $n=0$ and then the infimum 
$\inf_{0\leq k<n}\abs*{\bn[k]_\gamma}$ is infinite.

Some basic properties of $A(C,s)$ for easier reference:
\begin{alist}
\item Invariance under scaling. For $x\neq 0$, let $\Theta_x$ denote the
  scaling of the path, $(\Theta_x\omega)(t)=x^{-1}\omega(x^2t)$. Then, since
  $\lT\Theta_x=\Theta_x\lT$ clearly holds for the \levy/ transformation
  $\lT$,  we have  
  \begin{equation}\label{eq:TxA}
    t\in A(C,s)(\omega)\quad\iff\quad x^{-2}t\in A(C,s)(\Theta_x\omega)
  \end{equation}
\item Since the scaling $\Theta_x$ preserves the Wiener--measure, the previous
  point implies that $\P{t\in A(C,s)}$ does not depend on $t>0$.  
\end{alist}
Observe that $A(C,s)$ contains an open interval on the right of every zero of
$\bn$ for all $n\geq0$. Indeed, if $\gamma$ is a zero of $\bn$ for some
$n\geq0$, then by choosing the smallest $n$ such that $\bn_\gamma=0$, one gets
that $t\in A (C,s)$ for all $t>\gamma$ such that $t-\gamma$ is small enough.
Since the union of the set of zeros of the iterated paths is dense, see
\cite{MR1975087}, we have that the set of good time points is a dense open
set. Unfortunately this is not enough for our purposes; a dense open set might
be of small Lebesgue measure.  
To prove that the set of good time points is of full Lebesgue measure, 
we borrow a notion from real analysis.
\begin{df}Let $H\subset\real$ and denote by $f(x,\eps)$ the supremum of
  the lengths of the  intervals contained in $(x-\eps,x+\eps)\setminus H$. 
  Then $H$ is \emph{porous} at $x$ if
  $\limsup _{\eps\to0+}f(x,\eps)/\eps>0$. 

  A set $H$ is called porous when it is porous at each point $x\in H$.    
\end{df}
Observe that if $H$ is porous at $x$ then its lower density 
\begin{displaymath}
  \liminf_{\eps\to0+}\frac{\lambda([x-\eps,x+\eps]\cap H)}{2\eps}\leq 
  1-\limsup_{\eps\to0+}\frac{f(x,\eps)}{2\eps}<1,
\end{displaymath}
where $\lambda$ denotes the Lebesgue measure. By Lebesgue's density theorem,
see \cite[pp. 13]{MR0071727}, the density of a measurable set exists and
equals to 1 at almost every point of the set. 
Since the closure of a porous set is also porous we obtain the well known fact
that a porous set is of zero Lebesgue measure.

\begin{lemma}\label{l7}
  Let $H$ be a random closed subset of $[0,\infty)$. If 
  $H$ is scaling invariant, that is $cH$ has the same law as $H$ for all
  $c>0$,
  then  
  \begin{displaymath}
    \event{1\not\in H}\subset \event{\text{$H$ is porous at 1}}
    \quad\text{and}\quad
    \P{\event{\text{$H$ is porous at 1}}\setminus\event{1\not\in H} }=0.
  \end{displaymath}
  That is,  the events $\event{1\not\in H}$ and
  $\event{\text{$H$ is porous at 1}}$ are equal up to a null sets.

  In particular, if $H$ is porous at $1$ almost surely, then 
  $\P{1\notin H}=1$.  
\end{lemma}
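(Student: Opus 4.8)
The plan is to establish the stated set inclusion, then the complementary null--set identity, and finally to read off the last assertion. For the inclusion we work on the event $\event{1\notin H}$: since $H$ is closed, there the distance $d(1,H)$ is strictly positive, so for every $\eps<1\wedge d(1,H)$ the whole interval $(1-\eps,1+\eps)$ lies in $[0,\infty)\setminus H$; hence $f(1,\eps)=2\eps$ and $\limsup_{\eps\to0+}f(1,\eps)/\eps=2>0$, i.e.\ $H$ is porous at $1$. This gives $\event{1\notin H}\subset\event{\text{$H$ is porous at }1}$.

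For the null--set identity it suffices to show $\P{\event{1\in H}\cap\event{\text{$H$ is porous at }1}}=0$, and the idea is to combine a deterministic ``Lebesgue--measure--zero'' fact with Fubini's theorem and with scaling invariance. Fix a realisation of $H$. By the inequality displayed in the remark preceding the lemma, at every point at which $H$ is porous the lower density of $H$ is strictly smaller than $1$, so Lebesgue's density theorem forces the deterministic set $B(H):=\set{x\geq0}{x\in H\ \text{and $H$ is porous at }x}$ to satisfy $\lambda(B(H))=0$, being a subset of $H$ consisting entirely of non--density points. Granting that $(x,\omega)\mapsto\Isymb_{B(H(\omega))}(x)$ is product--measurable, Fubini's theorem yields
\[
  \int_0^\infty \P{\event{x\in H}\cap\event{\text{$H$ is porous at }x}}\,\d x
  =\E{\lambda(B(H))}=0 ,
\]
so $p(x):=\P{\event{x\in H}\cap\event{\text{$H$ is porous at }x}}$ vanishes for Lebesgue--almost every $x>0$. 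Scaling invariance now promotes this to the single value $x=1$: for $c>0$ one has $x\in cH\iff x/c\in H$, and from $f_{cH}(x,\eps)=c\,f_{H}(x/c,\eps/c)$ it follows that $cH$ is porous at $x$ exactly when $H$ is porous at $x/c$; since $cH$ has the same law as $H$, this gives $p(x)=p(x/c)$ for all $x,c>0$, so $p$ is constant on $(0,\infty)$. A constant that vanishes almost everywhere vanishes identically, whence $p(1)=0$.

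The only step that genuinely requires care is the joint measurability used for Fubini: for fixed $\eps>0$ the map $(x,\omega)\mapsto f(x,\eps)$ is measurable because the complement of the closed set $H(\omega)$ is a countable union of open intervals whose lengths depend measurably on $H$, and, $\eps\mapsto f(x,\eps)$ being nondecreasing, porosity at $x$ can be detected through $\limsup f(x,\eps)/\eps$ taken along rational $\eps\downarrow0$. I expect this routine but slightly fiddly bookkeeping to be the main obstacle; everything else is soft. Putting the two parts together, the events $\event{1\notin H}$ and $\event{\text{$H$ is porous at }1}$ differ only by a null set, which is the assertion of the lemma. Finally, if $H$ is porous at $1$ almost surely then $\P{\event{\text{$H$ is porous at }1}}=1$, and since that event coincides with $\event{1\notin H}$ up to a null set, $\P{1\notin H}=1$.
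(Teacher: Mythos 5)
Your proof is correct and follows essentially the same route as the paper's: the closedness of $H$ gives the inclusion, the Lebesgue density theorem makes the set of porosity points of $H$ lying in $H$ a null set, Fubini turns this into $\int_0^\infty \P{x\in H,\ H\text{ porous at }x}\,\d x=0$, and scaling invariance shows the integrand is constant in $x$, hence zero at $x=1$. The measurability bookkeeping you flag is exactly the point the paper also handles (via the hit-or-miss $\sigma$-algebra on closed sets), so there is nothing further to add.
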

\begin{proof}
  Recall that a random closed set $H$ is a random element in the space of the
  closed subset of $[0,\infty)$ ---we denote it by $\F$---, endowed with the
  smallest $\sigma$-algebra  containing  the sets 
  $C_G=\set{F\in\F}{F\cap G\neq\emptyset}$, for all open
  $G\subset[0,\infty)$. Then it is easy to see, that 
  $\set{\omega}{\text{$H(\omega)$ is porous at 1}}$ is an event and
  \begin{align*}
    \mathbf{H}&=\set{(t,\omega)\in[0,\infty)\times\Omega}{t\in H(\omega)},\\
    \mathbf{H}_p&=
    \set{(t,\omega)\in[0,\infty)\times\Omega}
    {\text{$H(\omega)$ is porous at $t$}}
  \end{align*}
  are  measurable subsets of $[0,\infty)\times\Omega$. 
  We will also use the notation 
  \begin{displaymath}
    H_p(\omega)=
    \set{t\in[0,\infty)}{(t,\omega)\in\mathbf{H}_p}=
    \set{t\in[0,\infty)}{\text{$H(\omega)$ is porous at $t$}}. 
  \end{displaymath}

  Then for each $\omega\in\Omega$ the set $H(\omega)\cap H_p(\omega)$ is a
  porous set, hence of Lebesgue measure zero; see the remark before 
  Lemma \ref{l7}.  Whence Fubini theorem yields that
  \begin{displaymath}
    (\lambda\otimes\P)(\mathbf{H}\cap \mathbf{H}_p)=\E{\lambda(H\cap H_p)}=0.
  \end{displaymath}
  Using Fubini theorem again we get
  \begin{displaymath}
    0=(\lambda\otimes\P)(\mathbf{H}\cap\mathbf{H}_p)=
    \int_0^\infty \P{t\in H\cap H_p}dt.
  \end{displaymath}
  Since $\P{t\in H\cap H_p}$ does not depend on $t$ by the scaling invariance of
  $H$ we have that $\P{1\in H\cap H_p}=0$. 
  Now $\event{1\in H\cap H_p}=\event{1\in H_p}\setminus \event{1\not\in H}$,
  so we have shown that 
  \begin{displaymath}
    \P{\event{\text{$H$ is porous at 1}}\setminus\event{1\not\in H} }=0.
  \end{displaymath}

  The first part of the claim 
  $\event{1\not\in H}\subset\event{\text{$H$ is porous at $1$}}$ is obvious,
  since $H(\omega)$ is closed and if $1\not\in H(\omega)$ then there is an
  open interval containing 1 and disjoint from $H$. 
\end{proof}
We want to apply this Lemma to $[0,\infty)\setminus A(C,s)$, the random set of
bad time points. We have seen in \eqref{eq:TxA} that the law of
$[0,\infty)\setminus A(C,s)$ has the scaling property. 
For easier reference we state explicitly the corollary of the above argument,
that is the combination of (\ref{it:1}) in
Theorem~\ref{thm:cf}, Propositions~\ref{prop:1}--\ref{prop:2} and
Lemma~\ref{l7}: 
\begin{corollary}\label{cor:8}
  If $[0,\infty)\setminus A(C,s)$ is almost surely porous at 1 for any $C>0$
  and $s\in(0,1)$ then the \levy/ transformation is strongly mixing.
\end{corollary}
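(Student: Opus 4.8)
The plan is to run the chain of implications assembled above. By the sufficient condition \eqref{eq:hn 1} together with the scaling simplification stated just after it (and part~\ref{it:1} of Theorem~\ref{thm:cf}), it suffices to prove
\begin{displaymath}
  \E{\hn(s)\hn(1)}\to0\qquad\text{for every }s\in(0,1).
\end{displaymath}
So fix $s\in(0,1)$ and argue for each $C>0$ separately. For a given $C$ the goal is to produce, almost surely, a measurable random time $\tau$ with properties (\ref{it:a})--(\ref{it:c}) of Proposition~\ref{prop:1}. Once that is done, Proposition~\ref{prop:2} upgrades $\tau$ to a \emph{stopping time} with the same three properties, and then Proposition~\ref{prop:1} gives
\begin{displaymath}
  \limsup_{n\to\infty}\abs{\E{\hn(s)\hn(1)}}\le\P{\sup_{t\in[0,1]}\abs{\beta_t}>C}.
\end{displaymath}
The left-hand side does not involve $C$, while the right-hand side tends to $0$ as $C\to\infty$; hence $\E{\hn(s)\hn(1)}\to0$ — exactly what we need, provided such a $\tau$ exists for every $C$.

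To build $\tau$ for a fixed $C$, work on the event $\event{1\in A(C,s)}$. Taking $t=1$ in the definition \eqref{eq:Adef} produces an index $n$ and a time $\gamma$ with $s<\gamma<1$, $\bn_\gamma=0$ and $\abs*{\bn[k]_\gamma}>C\sqrt{1-\gamma}$ for all $0\le k<n$ (this choice can be made measurably, e.g.\ by selecting $\gamma$ from the zeros of the iterated paths lying in $(s,1)$). Put $\tau:=\gamma$ and let $n_0\le n$ be the smallest index with $\bn[n_0]_\gamma=0$. Then the index $\nu$ appearing in (\ref{it:b}) equals $n_0$, so $s<\tau<1$ and $\nu<\infty$; moreover $\{0,\dots,n_0-1\}\subseteq\{0,\dots,n-1\}$, so the bound above gives $\abs*{\bn[k]_\tau}>C\sqrt{1-\tau}$ for $0\le k<\nu$. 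Thus (\ref{it:a})--(\ref{it:c}) hold on $\event{1\in A(C,s)}$; on the complementary event we may define $\tau$ arbitrarily, since the three requirements need hold only almost surely.

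It remains to see that $\P{1\in A(C,s)}=1$. The set $H:=[0,\infty)\setminus A(C,s)$ is closed, since $A(C,s)$ is open (as observed before the definition of porosity). By \eqref{eq:TxA} and the invariance of the Wiener measure under the scalings $\Theta_x$, the law of $cH$ equals that of $H$ for every $c>0$; thus $H$ is a scaling-invariant random closed subset of $[0,\infty)$. The hypothesis of the Corollary says exactly that $H$ is porous at $1$ almost surely, so Lemma~\ref{l7} gives $\P{1\notin H}=1$, i.e.\ $\P{1\in A(C,s)}=1$. Hence, for every $s\in(0,1)$, the estimate $\limsup_n\abs{\E{\hn(s)\hn(1)}}\le\P{\sup_{t\in[0,1]}\abs{\beta_t}>C}$ holds for all $C$, so $\E{\hn(s)\hn(1)}\to0$; by scaling $\E{\hn(u)\hn(v)}\to0$ for all $0<u<v$, so $\E{X_n^2(t)}\to0$ for each $t$ by dominated convergence in \eqref{eq:Xn^2}, so $X_n(t)\pto0$, and $\lT$ is strongly mixing by Theorem~\ref{thm:cf}. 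Being a corollary, this has no real obstacle; the two spots deserving a moment's care are the passage from the index $n$ of \eqref{eq:Adef} to the first vanishing index $\nu$ — needed so that (\ref{it:c}) holds for \emph{all} $k<\nu$ and not merely for the originally chosen $n$ — and the uniformity in $n$ of the bound in Proposition~\ref{prop:1}, which is what makes the limit $C\to\infty$ legitimate.
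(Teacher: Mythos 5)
Your proposal is correct and follows essentially the same route as the paper: Corollary~\ref{cor:8} is stated there precisely as the combination of Lemma~\ref{l7} (applied to the scaling-invariant closed set $[0,\infty)\setminus A(C,s)$ to get $\P{1\in A(C,s)}=1$), the construction of a random time from the definition of $A(C,s)$ at $t=1$, Propositions~\ref{prop:1}--\ref{prop:2}, and part~(\ref{it:1}) of Theorem~\ref{thm:cf} via \eqref{eq:Xn^2}. The two points you flag at the end (passing to the first vanishing index $\nu$, and the uniformity in $n$ that lets $C\to\infty$) are handled exactly as you describe.
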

The condition formulated in terms $A(C,s)$ requires that small neighborhoods
of time 1 contain sufficiently large subintervals of $A(C,s)$. Looking at only
the left and only the right neighborhoods we can obtain Theorem~\ref{prop:7}
and \ref{prop:8} below, respectively.  

To state these results we introduce the following notations, for $t>0$
\begin{itemize}
\item
  \begin{displaymath}
    \gamma_n(t)=\max\set{s\leq t}{\bn_s=0}
  \end{displaymath}
  is the last zero before $t$,
\item 
  \begin{displaymath}
    \gamma_n^*(t)=\max_{0\leq k \leq n} \gamma_k(t),
  \end{displaymath}
  the last time $s$ before $t$ such that $\bn[0],\dots,\bn[n]$ has no zero in
  $(s,t]$, 
\item 
  \begin{displaymath}
    Z_n(t)=\min_{0\leq k < n} \abs*{\bn[k]_t}.
  \end{displaymath}
\end{itemize}
When $t=1$ we omit it from the notation, that is, $\gamma_n=\gamma_n(1)$,
$\gamma^*_n=\gamma^*_n(1)$  and $Z_n=Z_n(1)$.

\begin{theorem}\label{prop:7}
  Let 
  \begin{equation}
    \label{eq:Y def}
    Y=\limsup_{n\to\infty} \frac{Z_n(\gamma^*_n)}{\sqrt{1-\gamma^*_n}}.
  \end{equation}
  Then $Y$ is a $\lT$ invariant, $\smallset{0,\infty}$ valued random variable
  and 
  \begin{rlist}
  \item either $\P{Y=0}=1$;
  \item or $0<\P{Y=0}<1$, and then $\lT$ is not  ergodic; 
  \item or $\P{Y=0}=0$, that is $Y=\infty$ almost surely, and
    $\lT$ is strongly mixing. 
  \end{rlist}
\end{theorem}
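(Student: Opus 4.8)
The plan is to establish three claims in turn: that $Y$ is $\lT$-invariant; that $Y$ takes only the values $0$ and $\infty$; and then the trichotomy, in which (i) is vacuous, (ii) is immediate from invariance, and (iii) follows from the first two claims together with Corollary~\ref{cor:8}.

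\emph{Invariance.} Applying $\lT$ replaces $\bn[k]$ by $\bn[k+1]$, hence $\gamma_k(1)$ by $\gamma_{k+1}(1)$, so $\gamma_n^*(1)$ is replaced by $\max_{1\le k\le n+1}\gamma_k(1)$; this differs from $\gamma_{n+1}^*(1)=\max_{0\le k\le n+1}\gamma_k(1)$ only through the term $\gamma_0(1)$. The density result \eqref{eq:density} gives $\gamma_n^*(1)\uparrow 1$ almost surely, while $\gamma_0(1)<1$ almost surely, so $\gamma_{n+1}^*(1)=\gamma_n^*(1)\circ\lT$ for all large $n$. For these $n$, $Z_{n+1}(\gamma_{n+1}^*)=\min\bigl(\abs*{\bn[0]_{\gamma_{n+1}^*}},\,Z_n(\gamma_n^*)\circ\lT\bigr)$, and after dividing by $\sqrt{1-\gamma_{n+1}^*}$ the first term tends to $\infty$ (since $\gamma_{n+1}^*\to 1$ and $\bn[0]_1\neq 0$ a.s.) and is irrelevant to the $\limsup$; hence $Y\circ\lT=Y$ a.s. Once $Y\in\smallset{0,\infty}$ a.s.\ is known, $\event{Y=0}$ is the only non-trivial $\lT$-invariant set in play, and the three cases are precisely $\P{Y=0}=1$, $\P{Y=0}\in(0,1)$, $\P{Y=0}=0$; in case (ii), $\event{Y=0}$ is a $\lT$-invariant set of measure strictly between $0$ and $1$, so $\lT$ is not ergodic.

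\emph{$Y$ is $\smallset{0,\infty}$-valued --- the main obstacle.} Put $b_n=Z_n(\gamma_n^*)/\sqrt{1-\gamma_n^*}$, so $Y=\limsup_n b_n$. Note that $b_n>0$ forces $\gamma_n^*=\gamma_n(1)$ (otherwise $\bn[k]_{\gamma_n^*}=0$ for some $k<n$ and $Z_n(\gamma_n^*)=0$). Hence, for $C>0$ and $s\in(0,1)$, the event $\event{Y\ge C}$ coincides, up to a null set, with the event that $1$ belongs to the random set $H_{C,s}$ of $t>0$ for which there are infinitely many $n$ with $\gamma_n^*(t)=\gamma_n(t)\in(st,t)$ and $Z_n(\gamma_n^*(t))>C\sqrt{t-\gamma_n^*(t)}$. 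Exactly the scaling computation behind \eqref{eq:TxA} shows $H_{C,s}(\Theta_x\omega)=x^{-2}H_{C,s}(\omega)$, so $\P{t\in H_{C,s}}$ does not depend on $t$. The crux is that it does not depend on $C\in(0,\infty)$ either; I would try to obtain this from a self-similarity argument at the last zero $\gamma_n^*$ --- on $(\gamma_n^*,t)$ all of $\bn[0],\dots,\bn[n]$ keep a constant sign, so this interval carries a scaling structure that should let one convert a configuration witnessing $b_n>C$ into one witnessing a larger value $b_{n'}>C'$. Granting this, $\P{Y\ge C}$ is constant on $(0,\infty)$, whence $\P{0<Y<\infty}=\lim_{C\downarrow0}\P{Y\ge C}-\lim_{C\uparrow\infty}\P{Y\ge C}=0$. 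I expect this to be the delicate point, because $\gamma_n^*$ is a random time and the iterated paths are strongly dependent; an alternative would be the direct self-improvement bound $\P{Y\ge C}\le\P{Y\ge 2C}$, obtained by showing that whenever $\bn[0],\dots,\bn[n-1]$ all lie at distance $\gg\sqrt{1-\gamma_n^*}$ from $0$ at time $\gamma_n^*$, one can find a later zero at which they lie even further from $0$ relative to the shorter remaining time to $1$.

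\emph{The trichotomy.} Case (i), $\P{Y=0}=1$, requires nothing; case (ii) was handled above. For case (iii), $\P{Y=0}=0$, the second claim gives $Y=\infty$ a.s.; fix $C>0$, $s\in(0,1)$. On $\event{Y=\infty}$ choose $n_j\uparrow\infty$ with $b_{n_j}\to\infty$. For large $j$ (using \eqref{eq:density} for $\gamma_n^*(1)\to1$) the time $\gamma_j:=\gamma_{n_j}^*(1)=\gamma_{n_j}(1)$ satisfies $s<\gamma_j<1$, is a zero of $\bn[n_j]$ but of no earlier path, and $R_j:=\inf_{0\le k<n_j}\abs*{\bn[k]_{\gamma_j}}=Z_{n_j}(\gamma_{n_j}^*)$ has $R_j/\sqrt{1-\gamma_j}\to\infty$. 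For every $t\in\bigl(\gamma_j,\,\min(\gamma_j/s,\ \gamma_j+R_j^2/C^2)\bigr)$ the pair $(n_j,\gamma_j)$ witnesses $t\in A(C,s)$; and for large $j$ both $\gamma_j/s>1$ and $\gamma_j+R_j^2/C^2>1$, so this interval contains $1$. Thus $1\in A(C,s)$ almost surely, hence $[0,\infty)\setminus A(C,s)$ is a closed set not containing $1$ and so is trivially porous at $1$, almost surely, for every $C$ and $s$; by Corollary~\ref{cor:8}, $\lT$ is strongly mixing. The only substantial difficulty is therefore the $\smallset{0,\infty}$-dichotomy of the second step.
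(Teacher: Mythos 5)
Your treatment of the invariance of $Y$, of cases (i)--(ii), and of the implication ``$Y=\infty$ a.s. $\Rightarrow$ strong mixing'' is correct; indeed your direct verification that $Y=\infty$ forces $1\in A(C,s)$ (choose $n_j$ with $b_{n_j}\to\infty$ and check that the witness $(n_j,\gamma_{n_j}^*)$ covers $t=1$ for large $j$) is a legitimate shortcut past the porosity machinery for that particular implication. But there is a genuine gap exactly where you flag one: the claim that $Y$ is $\smallset{0,\infty}$-valued is never established, and neither of your two sketches amounts to a proof. The ``self-similarity at $\gamma_n^*$'' idea founders because Brownian scaling moves the reference time $1$, so it cannot compare $\P{Y\geq C}$ with $\P{Y\geq C'}$ directly; and the self-improvement bound $\P{Y\ge C}\le\P{Y\ge 2C}$ would require producing, from a configuration with $Z_n(\gamma_n^*)>C\sqrt{1-\gamma_n^*}$, a later index $m$ and last zero $\gamma_m^*$ with a strictly better ratio --- precisely the quantitative control over the zeros of higher iterates that is not available (Malric's theorem gives density of zeros but says nothing about $\min_{k<m}\abs*{\bn[k]}$ at them relative to $\sqrt{1-t}$). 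Everything in your cases (ii) and (iii) that relies on ``$\P{Y=0}=0\Rightarrow Y=\infty$ a.s.'' therefore remains unsupported.

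The paper closes this gap by a measure-theoretic detour rather than a pathwise one. Proposition~\ref{prop:7new} shows that on $\event{Y>0}$ (not merely $\event{Y=\infty}$) the complement of $\tilde{A}(C,s)$ is porous at $1$: for each of the infinitely many $n$ with $Y_n>Y/2$ one exhibits an explicit interval $I_n=(\gamma_n^*,\gamma_n^*+r_n)\subset\tilde{A}(C,s)$ with $r_n/(1-\gamma_n^*)$ bounded below. Lemma~\ref{l7} --- the Fubini plus Lebesgue-density argument for scaling-invariant random closed sets --- then upgrades ``porous at $1$ a.s. on $\event{Y>0}$'' to ``$1\in\tilde{A}(C,s)$ a.s. on $\event{Y>0}$''. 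Since $1\in\tilde{A}(C,s)$ forces $\sup_{n\geq1} Y_n\geq C$, intersecting over the countable family $C=k$, $s=1-1/k$ yields $Y=\infty$ a.s. on $\event{Y>0}$ (Proposition~\ref{prop:Y>0}). This is the one idea your proposal is missing: porosity combined with the scaling invariance of the law of the bad set substitutes for the unavailable pathwise self-improvement. With Lemma~\ref{l7} and the interval construction of Proposition~\ref{prop:7new} added, the remainder of your argument goes through.
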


\begin{theorem}\label{prop:8}
  Let 
  \begin{equation}
    \label{eq:X def}
    X=\liminf_{n\to\infty} \frac{Z_{n+1}}{Z_n}.
  \end{equation}
  Then $X$ is a $\lT$ invariant, $\smallset{0,1}$ valued random variable and
  \begin{rlist}
  \item either $\P{X=1}=1$;
  \item or $0<\P{X=1}<1$, and then $\lT$ is not ergodic;
  \item or $\P{X=1}=0$, that is $X=0$ almost surely, and  $\lT$ is strongly
    mixing.  
  \end{rlist}
\end{theorem}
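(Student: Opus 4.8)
The plan is to reduce everything to Corollary \ref{cor:8}, i.e.\ to show that $X=0$ almost surely forces $[0,\infty)\setminus A(C,s)$ to be porous at $1$ for every $C>0$ and $s\in(0,1)$. First I would establish that $X$ is $\{0,1\}$-valued and $\lT$-invariant. Invariance is essentially immediate: replacing $\beta$ by $\lT\beta$ shifts the index, $Z_n(\lT\beta)=\min_{1\le k<n+1}\abs*{\bn[k]_1}$, and since $Z_{n+1}/Z_n\to$-behaviour of the $\liminf$ is unchanged by dropping the single term $\abs*{\beta_1}$ at the front (one checks that inserting or removing one factor in a $\min$ perturbs the ratio in a way that vanishes along a subsequence realizing the $\liminf$), so $X\circ\lT=X$ a.s. For the $\{0,1\}$ dichotomy, note $Z_{n+1}\le Z_n$ always, so $X\le 1$; and $X$ is a tail-type functional of the stationary sequence $(\abs*{\bn[k]_1})_{k\ge0}$, so by ergodic decomposition it suffices to argue that on any ergodic component $X$ is a.s.\ constant, hence equals either its essential infimum or we must rule out intermediate values — here I would use a zero-one argument: $\{X<1\}$ is $\lT$-invariant up to null sets, and on $\{X<1\}$ one can iterate to force $X=0$ (if $\liminf Z_{n+1}/Z_n<1$ then along the realizing subsequence the $Z_n$ drop multiplicatively infinitely often, and combined with stationarity this pushes the $\liminf$ all the way to $0$). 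This gives the trichotomy (i)--(iii) once we know case (iii) implies strong mixing, which is the real content.

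So assume $X=0$ a.s.; I must verify the hypothesis of Corollary \ref{cor:8}. Fix $C>0$, $s\in(0,1)$. Porosity of the bad set at $1$ means: there exist arbitrarily small $\eps>0$ and subintervals of $(1-\eps,1+\eps)$ of length $\gtrsim\eps$ lying entirely in $A(C,s)$. The natural candidate intervals sit just to the \emph{right} of a common zero: if $\gamma<1$ is such that $\bn[n]_\gamma=0$ for some $n$ and $\inf_{0\le k<n}\abs*{\bn[k]_\gamma}>0$, then by continuity $t\in A(C,s)$ for all $t$ slightly larger than $\gamma$ — more precisely for $t\in(\gamma,\gamma+\delta)$ with $\delta$ small enough that $st<\gamma$ and $C\sqrt{t-\gamma}<\inf_{0\le k<n}\abs*{\bn[k]_\gamma}$. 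The issue is quantitative: to get porosity at the specific point $1$ I need such a zero $\gamma$ close to $1$, with the gap $\inf_{0\le k<n}\abs*{\bn[k]_\gamma}$ not too small relative to $1-\gamma$, so that the resulting good interval reaches back near $1$ with length comparable to $1-\gamma$. This is exactly where $X=0$ enters. Along a subsequence $n_j$ with $Z_{n_j+1}/Z_{n_j}\to 0$, the path $\bn[n_j]$ has $\abs*{\bn[n_j]_1}=Z_{n_j+1}$ very small compared to $Z_{n_j}=\min_{k<n_j}\abs*{\bn[k]_1}$; hence $\bn[n_j]$ is near $0$ at time $1$ while all earlier paths are bounded away from $0$. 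By Malric's density theorem \eqref{eq:density} applied to the path $\bn[n_j]$ (or rather: since $\bn[n_j]_1$ is small, the last zero $\gamma_{n_j}(1)$ of $\bn[n_j]$ before time $1$ is close to $1$, with $1-\gamma_{n_j}$ controlled by $Z_{n_j+1}^2$ up to the usual Brownian fluctuation), one obtains a zero $\gamma=\gamma_{n_j}\in(s,1)$ of $\bn[n_j]$ with $1-\gamma$ small; and by continuity of the finitely many earlier paths together with $Z_{n_j}$ staying away from $0$, $\inf_{0\le k<n_j}\abs*{\bn[k]_\gamma}$ is of order $Z_{n_j}$, which dominates $\sqrt{1-\gamma}\asymp Z_{n_j+1}=o(Z_{n_j})$. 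Thus $(\gamma,1]$ — or a definite fraction of it — lies in $A(C,s)$, and since $1-\gamma\to0$ this is precisely porosity of the bad set at $1$.

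The main obstacle I anticipate is making the relation "$1-\gamma_{n_j}$ is comparable to $Z_{n_j+1}^2$" rigorous and uniform enough: a priori knowing $\abs*{\bn[n_j]_1}$ is small only says $\bn[n_j]$ visited $0$ recently \emph{in quadratic-variation time}, which for a Brownian path is recent in real time, but the constants fluctuate and one must pass to a further subsequence or use a Borel--Cantelli / scaling argument (Lemma \ref{l7}'s scaling invariance is tailor-made for this) to control them almost surely rather than in probability. A secondary subtlety is handling the quantifier over $n$ in the definition \eqref{eq:Adef} of $A(C,s)$ cleanly — we only need \emph{some} $n$ to work, and $n=n_j$ with the zero $\gamma=\gamma_{n_j}$ does, but one should check the inequality $st<\gamma$ genuinely holds, which it does once $1-\gamma<(1-s)$, automatic for $j$ large. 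Once porosity at $1$ is in hand for all $C,s$, Corollary \ref{cor:8} delivers strong mixing, completing case (iii); cases (i) and (ii) follow from the trichotomy together with the fact that a nontrivial $\lT$-invariant set obstructs ergodicity by definition.
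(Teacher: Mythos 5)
Your reduction to Corollary~\ref{cor:8} is the right frame, and you have correctly identified where $X=0$ must enter, but there are two genuine gaps. The first and most serious is in the core of case (iii). You propose to place the good intervals just to the \emph{left} of $1$, at the last zero $\gamma_{n_j}(1)$ of $\bn[n_j]$ before time $1$, and you need $1-\gamma_{n_j}(1)$ to be of order $Z_{n_j+1}^2$ along your subsequence. This is exactly the step you flag as an ``obstacle'', and it does not go through by a Borel--Cantelli or further-subsequence argument: the indices $n_j$ and the values $\abs*{\bn[n_j]_1}$ are determined by the whole path up to time $1$, so the events ``the last zero of $\bn[n_j]$ before $1$ is within $O(Z_{n_j+1}^2)$ of $1$'' are not independent of the data selecting $j$, and a small value of $\abs*{\bn[n_j]_1}$ does not almost surely force a recent zero (the path may have approached $0$ without crossing). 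The paper's proof (Proposition~\ref{prop:100}) sidesteps this by working to the \emph{right} of time $1$: the quantities $n_k$, $x_k=\hn[n_k](1)\bn[n_k]_1$ and $\xi$ are all $\B_1$-measurable, the increment $B=\beta_{1+\cdot}-\beta_1$ is independent of $\B_1$, and Corollary~\ref{cor:14} (a Blumenthal $0$--$1$ law argument) shows that $\Theta_{x_k}B$ lands in a fixed positive-probability set $D(\xi,C)$ infinitely often; this \emph{creates} a zero of $\bn[n_k]$ in $(1,1+x_k^2]$ with all the quantitative control needed, and the good intervals $I_k$ sit in $(1,1+2x_k^2)$. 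Without this use of the fresh post-$1$ randomness your construction has no almost-sure control of $1-\gamma_{n_j}(1)$.

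The second gap is the $\smallset{0,1}$-dichotomy. Your claim that on $\event{X<1}$ the $Z_n$ ``drop multiplicatively infinitely often'' and that ``stationarity pushes the liminf all the way to $0$'' is not an argument: $\liminf_n Z_{n+1}/Z_n=c\in(0,1)$ is a priori consistent with invariance ($X$ could be constant equal to $c$ on an ergodic component), and the multiplicative drops compound in $Z_n$, not in the ratio. In the paper this is Proposition~\ref{prop:X<1}, and it is deduced \emph{from} the porosity result rather than proved beforehand: up to null sets $\event{X<1}\subset\event{1\in\dtilde{A}_L(C,s)}$ for every $C$ by Proposition~\ref{prop:100} and Lemma~\ref{l7}; on $\event{1\in\dtilde{A}_L(C,s)}$ one extracts an index $n$ with $Z_{n+1}/Z_n\le L/(C-L)$; letting $C\to\infty$ gives $\inf_n Z_{n+1}/Z_n=0$, hence $X=0$ since each individual ratio is a.s.\ positive. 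Finally, a minor point: your invariance argument needs $Z_n\to0$ (Corollary~\ref{cor:11}) to guarantee that the term $\abs*{\bn[0]_1}$ eventually drops out of the minimum, so that $(Z_{n+1}/Z_n)\circ\lT=Z_{n+2}/Z_{n+1}$ holds exactly for all large $n$; without that, ``removing one factor in a $\min$'' can change the $\liminf$.
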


\begin{remark*}\label{rem:XY}
  In Theorem~\ref{prop:8}, the first possibility $X=1$ looks very unlikely.
  If one  is able to exclude it, then the \levy/ $\lT$ transformation is
  either strongly mixing or not ergodic and the invariant random
  variable $X$ witnesses it. 
\end{remark*}

The statements in Theorems~\ref{prop:7} and \ref{prop:8} have similar structure, and the easy parts, the invariance
of $X$ and $Y$ are proved in subsection~\ref{sec:3.5.0}, while the more
difficult parts are proved in subsection~\ref{sec:3.4} and \ref{sec:3.5},
respectively. 

We can complement Theorems~\ref{prop:7} and \ref{prop:8} with the next
statement, which shows that $X$, $Y$ and the goodness of time $1$ for all
$C>0$ and $s\in(0,1)$ are strongly connected. Its proof is  defered to
subsection~\ref{sec:3.6} since it uses the side results of the proofs of
Theorems~\ref{prop:7} and \ref{prop:8}.

\begin{theorem}\label{thm:XY}Set
  \begin{displaymath}
    A=\bigcap_{s\in(0,1)}\bigcap_{C>0} A(C,s).
  \end{displaymath}
  Then the events $\event{1\in A}$, $\event{Y=\infty}$ and $\event{X=0}$ are
  equal up to null events.
  In particular, $X=1/(1+Y)$ almost surely. 
\end{theorem}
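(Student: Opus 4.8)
The plan is to show that the three events $\{1\in A\}$, $\{Y=\infty\}$ and $\{X=0\}$ are equal up to null sets; the concluding identity $X=1/(1+Y)$ is then automatic. Indeed, by Theorems~\ref{prop:7} and~\ref{prop:8} the variable $X$ takes values in $\{0,1\}$ and $Y$ takes values in $\{0,\infty\}$, so $1/(1+Y)$ takes values in $\{0,1\}$, and the almost sure equality $X=1/(1+Y)$ is nothing but the statement that $\{X=0\}$ and $\{Y=\infty\}$ differ by a null set. I would establish the three-way coincidence by first proving the genuine inclusion $\{Y=\infty\}\subseteq\{1\in A\}$ and then the reverse inclusions $\{1\in A\}\subseteq\{Y=\infty\}$ and $\{1\in A\}\subseteq\{X=0\}$ modulo null sets.

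For the first inclusion, fix $C>0$ and $s\in(0,1)$ and work on the almost sure event where \eqref{eq:density} holds; there $\gamma^*_n\uparrow 1$. For every $n$, the quotient $Z_n(\gamma^*_n)/\sqrt{1-\gamma^*_n}$ is strictly positive precisely when none of $\beta^{(0)},\dots,\beta^{(n-1)}$ vanishes at $\gamma^*_n$, and this forces the maximum defining $\gamma^*_n$ to be attained at the single index $n$, so that $\gamma^*_n=\gamma_n$ is a zero of $\beta^{(n)}$ while $\beta^{(0)},\dots,\beta^{(n-1)}$ stay away from $0$ on $[\gamma^*_n,1]$. On $\{Y=\infty\}$ there are infinitely many $n$ with $Z_n(\gamma^*_n)/\sqrt{1-\gamma^*_n}>C$, and since $\gamma^*_n>s$ from some index on we may choose such an $n$ with $\gamma^*_n>s$ too; the pair $(n,\gamma^*_n)$ is then exactly a witness for $1\in A(C,s)$. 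As $C$ and $s$ are arbitrary this yields $1\in A$, hence $\{Y=\infty\}\subseteq\{1\in A\}$ up to the null set on which \eqref{eq:density} fails.

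For the reverse inclusions I would use scaling invariance together with Lemma~\ref{l7}. For each $C>0$ and $s\in(0,1)$ the closed set $H_{C,s}=[0,\infty)\setminus A(C,s)$ has a scaling invariant law by \eqref{eq:TxA}, so Lemma~\ref{l7} gives that $\{1\in A(C,s)\}=\{1\notin H_{C,s}\}$ and $\{H_{C,s}\text{ is porous at }1\}$ differ by a null set. Since a closed set avoiding $1$ avoids a whole neighbourhood of it, there are the eventwise inclusions $\{1\notin H_{C,s}\}\subseteq\{H_{C,s}\text{ is porous from the left at }1\}\subseteq\{H_{C,s}\text{ is porous at }1\}$, and likewise with ``left'' replaced by ``right''; as the two outer events agree up to a null set, all four coincide up to null sets. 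I would then import the side results established in the proofs of Theorems~\ref{prop:7} and~\ref{prop:8}: the left-neighbourhood part of the proof of Theorem~\ref{prop:7} shows that $\{Y=\infty\}$ coincides, up to a null set, with $\bigcap_{C,s}\{H_{C,s}\text{ is porous from the left at }1\}$, and the right-neighbourhood part of the proof of Theorem~\ref{prop:8} shows that $\{X=0\}$ coincides, up to a null set, with $\bigcap_{C,s}\{H_{C,s}\text{ is porous from the right at }1\}$. Since $A(C,s)$ decreases in $C$ and in $s$, these intersections can be taken over a single countable cofinal family, so coincidence up to null sets is preserved by them; combining everything with $\bigcap_{C,s}\{1\in A(C,s)\}=\{1\in A\}$ shows that $\{Y=\infty\}$, $\{X=0\}$ and $\{1\in A\}$ all agree up to null sets, which is the theorem.

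The witness construction of the second paragraph, the sandwichings and the measurability of the one-sided porosity events (checked exactly as in the proof of Lemma~\ref{l7}), and the cofinality bookkeeping are all routine. The substantive work is to read out of the proofs of Theorems~\ref{prop:7} and~\ref{prop:8} the precise two-sided statements that $\{Y=\infty\}$, respectively $\{X=0\}$, is the event that $H_{C,s}$ is one-sidedly porous at $1$ for all $C$ and $s$, up to a null set: those proofs only need, for their strong mixing conclusions, the inclusions $\{Y=\infty\}\subseteq\{1\in A(C,s)\}$ and $\{X=0\}\subseteq\{1\in A(C,s)\}$ up to null, so one has to check that the arguments there in fact give the converse inclusions as well (for $Y$ the converse also follows from the second paragraph via Lemma~\ref{l7}). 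I expect pinning these converse inclusions down to be the main obstacle.
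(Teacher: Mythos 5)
The easy parts of your plan are sound: the reduction of $X=1/(1+Y)$ to the coincidence of $\{X=0\}$ and $\{Y=\infty\}$, the direct witness argument for $\{Y=\infty\}\subseteq\{1\in A\}$, and the sandwich $\{1\notin H_{C,s}\}\subseteq\{H_{C,s}\text{ one-sidedly porous at }1\}\subseteq\{H_{C,s}\text{ porous at }1\}$ combined with Lemma~\ref{l7}. But the substantive content of the theorem is the inclusion $\{1\in A\}\subseteq\{X=0\}$ (modulo null sets), and your proposal does not prove it; you defer it to ``reading the converse inclusions out of the proofs of Theorems~\ref{prop:7} and~\ref{prop:8}'', and that cannot work as stated. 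Those proofs (Propositions~\ref{prop:Y>0} and~\ref{prop:X<1}) yield $\{1\in\tilde A(s)\}\subseteq\{Y=\infty\}$ and $\{1\in\breve A_L(s)\}\subseteq\{X=0\}$ only for the strictly smaller sets $\tilde A$ and $\breve A_L$, whose defining witness is forced to be a \emph{last} zero ($\gamma_n(t)=\gamma_n^*(t)$) and, for $\breve A_L$, to come with a controlled fluctuation of $\beta$ on $[\gamma_n(t),t]$. A witness for $1\in A(C,s)$ is an arbitrary zero of an arbitrary iterate, and nothing in those proofs converts such a witness into a bound on $Z_{n+1}/Z_n$ at time $1$: the path $\beta$ may oscillate wildly between that zero and time $1$ and destroy the comparison. (Your parenthetical that ``for $Y$ the converse also follows from the second paragraph'' is backwards: the second paragraph gives $\{Y=\infty\}\subseteq\{1\in A\}$, which is the direction you already have, not the one you need.)

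The paper closes exactly this gap with an argument that is absent from your proposal and is not an eventwise inclusion at all. It introduces the stopping time $\tau_C$, the first time after $1/2$ at which some iterate vanishes while all earlier iterates exceed $C\sqrt{1-t}$ in modulus, and shows $\{1\in A\}\subseteq\bigcap_{C>0}\{\tau_C<1\}$ up to a null set. Then, on $\{\tau_C<1\}$ intersected with the event that the post-$\tau_C$ fluctuation of $\beta$ up to time $1$ stays below $L\sqrt{1-\gamma}$, Lemma~\ref{l1} upgrades the witness at $\tau_C$ to a witness for $1\in\breve A_L(C,\tfrac12)$; the strong Markov property at $\tau_C$ and Brownian scaling turn this into the inequality $\P(\bigcap_C\{\tau_C<1\})\cdot\P(\max_{[0,1]}|\beta|\le L\sqrt{1-\gamma_0})\le\P(1\in\breve A_L(\tfrac12))=\P(X=0)$, and letting $L\to\infty$ gives $\P(\bigcap_C\{\tau_C<1\})\le\P(X=0)$. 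The theorem then follows by comparing the probabilities of the two ends of the chain $\{X=0\}\subseteq\{Y=\infty\}\subseteq\{1\in A\}\subseteq\bigcap_C\{\tau_C<1\}$. Without this coupling/Markov step (or a substitute for it), your outline does not yield the theorem.
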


We close this section with a sufficient condition for $X<1$ almost surely.
For $x>0$, let $\nu(x)=\inf\set*{n\geq 0}{\abs*{\bn_1}<x}$. By the next
Corollary of the density theorem of \citet{MR1975087}, recalled in
\eqref{eq:density}, $\nu(x)$ is finite almost surely for all $x>0$.   
\begin{corollary}\label{cor:11} $\inf_n \abs*{\bn}$ is identically zero
  almost surely, that is
  \begin{displaymath}
    \P{ \inf_{n\geq 0}\abs*{\bn_t}=0,\,\forall t\geq0}=1
  \end{displaymath}
\end{corollary}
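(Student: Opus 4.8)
The plan is to deduce the statement from the density theorem \eqref{eq:density} together with the scaling structure already exploited in Lemma~\ref{l7}. First I would fix $t>0$ and show $\P{\inf_{n\geq0}\abs*{\bn_t}=0}=1$; the ``$\forall t$'' version then follows by a separate density/continuity argument. Consider the random set
\begin{displaymath}
  H=\set*{t\geq0}{\inf_{n\geq0}\abs*{\bn_t}>0}.
\end{displaymath}
The complement $[0,\infty)\setminus H=\set{t\geq0}{\inf_n\abs*{\bn_t}=0}$ contains every zero of every iterated path $\bn$, hence by \eqref{eq:density} it is dense; equivalently $H$ has empty interior. I would next argue that $H$ is a countable union of closed sets — indeed $H=\bigcup_{m\geq1}\set{t}{\abs*{\bn[k]_t}\geq1/m\ \forall k}$, and each of these sets is closed by continuity of the (countably many) paths $\bn[k]$ — so each piece is closed with empty interior, i.e.\ nowhere dense. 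That alone gives a meagre set but not a null set, so this is not yet enough, exactly the difficulty flagged in the discussion before the Definition of porosity.

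The key step is therefore to establish that $H$ (or each closed piece of it) is \emph{porous} at each of its points, and then invoke Lemma~\ref{l7}. Here is where the density theorem must be used quantitatively. Suppose $t\in H$, so $\inf_{0\leq k<n}\abs*{\bn[k]_t}\geq c>0$ for some $c$ and in fact $\abs*{\bn[k]_t}\geq c$ for all $k\geq0$. For $\eps$ small, pick by \eqref{eq:density} a zero $\gamma$ of some $\bn$ with $\gamma\in(t-\eps,t)$; choosing $n$ minimal with $\bn_\gamma=0$ we have $\inf_{0\leq k<n}\abs*{\bn[k]_\gamma}>0$, and then by continuity $\abs*{\bn_s}$ stays small — say below $c/2$ — on an interval to the right of $\gamma$ of some length $\delta$, while the finitely many paths $\bn[0],\dots,\bn[n-1]$ stay bounded away from $0$ there. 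On that subinterval, $\inf_{j\geq0}\abs*{\bn[j]_s}$ need not be $0$, since paths with index $\geq n$ could vanish — so the honest statement is that one must iterate: apply the density theorem inside that subinterval again. The cleanest route is to show directly that $[0,\infty)\setminus A(C,s)$-type reasoning already present in the paper gives porosity: I would try to identify $H$ with (a subset of) $[0,\infty)\setminus A(C,s)$ for suitable $C,s$, or more precisely note that for any $t$ with $\inf_n\abs*{\bn_t}>0$ and any $\eps$, Malric's density of zeros produces a zero $\gamma$ of some $\bn$ in $(t-\eps,t)$ with all lower-indexed paths nonzero at $\gamma$, hence (choosing $n$ minimal) a nondegenerate interval of ``good'' points to the right of $\gamma$ inside $(t-\eps,t)$ — where ``good'' is in the sense that some $\bn[m]$ vanishes — and this interval is a definite fraction of $\eps$, giving $f(t,\eps)/\eps$ bounded below. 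That is the porosity of $H$ at $t$.

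With porosity in hand, Lemma~\ref{l7} applies once I check scaling invariance: since $\lT\Theta_x=\Theta_x\lT$, the event $\inf_n\abs*{\bn_t}=0$ transforms correctly under $\Theta_x$, so the law of $[0,\infty)\setminus H$ is scaling invariant, hence so is the law of $H$. Then Lemma~\ref{l7} yields $\P{t\notin H}=1$ for each fixed $t$, i.e.\ $\P{\inf_n\abs*{\bn_t}=0}=1$. For the uniform-in-$t$ conclusion, I would use that $t\mapsto\inf_n\abs*{\bn_t}$ is upper semicontinuous (an infimum of continuous functions), so the set where it is zero is a $G_\delta$; having full measure and being invariant under the dense family of zeros of the $\bn$'s, together with the fact that the zero set of each $\bn$ accumulates everywhere on $[0,\infty)\setminus H$, should force it to be all of $[0,\infty)$ almost surely. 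The main obstacle I anticipate is making the porosity estimate genuinely uniform — extracting a \emph{definite} fraction $f(t,\eps)/\eps\geq\rho>0$ rather than merely a positive length — which is where one has to be careful about the minimality of $n$ and the fact that the ``good interval'' length $\delta$ produced by continuity could a priori shrink faster than $\eps$; the resolution is that one is free to also shrink $\eps$ after locating $\gamma$, or to rescale using the scaling invariance, so that only a single ratio needs to be controlled.
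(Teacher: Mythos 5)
There is a genuine gap, and the route you chose is both harder than necessary and not actually completed. The porosity step is the problem: porosity of $H=\set{t}{\inf_{n\geq0}\abs*{\bn_t}>0}$ at a point $t\in H$ requires exhibiting, inside $(t-\eps,t+\eps)$, a subinterval of definite relative length contained \emph{entirely} in the complement $\set{s}{\inf_{n\geq0}\abs*{\bn_s}=0}$. But locating a zero $\gamma$ of some $\bn$ near $t$ only makes the single term $\abs*{\bn_s}$ small (not zero) for $s$ near $\gamma$, so no point of that subinterval is shown to lie in the complement of $H$. You notice this yourself and propose to ``iterate,'' but that iteration is never carried out, and it is essentially circular: producing an interval on which the infimum vanishes identically at every point is already a statement of the same nature as the corollary you are trying to prove (the complement of $H$ is a dense $G_\delta$, and dense $G_\delta$'s need not contain any interval). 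A second, independent gap is the final passage from ``$\P{t\in H}=0$ for each fixed $t$'' to ``$H=\emptyset$ almost surely'': full Lebesgue measure plus the $G_\delta$ structure does not force the set to be all of $[0,\infty)$, and the sentence ``should force it to be all of $[0,\infty)$'' is not an argument.

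The paper's proof avoids all of this and is purely pathwise, using Lemma~\ref{l1} rather than Lemma~\ref{l7}. If $\inf_{n\geq0}\abs*{\bn_t}>0$ at some $t$ for a given $\omega$, choose a neighborhood $I$ of $t$ with $\sup_{s\in I}\abs*{\beta_s-\beta_t}<\inf_{k}\abs*{\bn[k]_t}$; Lemma~\ref{l1} then says that \emph{no} iterated path $\bn[k]$, $k\geq0$, has a zero in $I$. This contradicts the almost sure density \eqref{eq:density} of the union of the zero sets, so such $\omega$ lie in a single null set, and the conclusion holds simultaneously for all $t\geq0$. The lesson is that Lemma~\ref{l1} converts the hypothesis ``$\inf_n\abs*{\bn_t}>0$'' into the much stronger zero-free-interval statement in one step, which is exactly the quantitative leverage your sketch was missing; no scaling invariance, porosity, or Fubini argument is needed here.
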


Recall that a family of real valued variables $\set{\xi_i}{i\in I}$ is
tight if $\sup_{i\in I}\P{\abs{\xi_i}>K}\to0$ as $K\to\infty$.  

\begin{theorem}\label{thm:tightness}
  The tightness of the families $\set{x\nu(x)}{x\in(0,1)}$ and
  $\set{nZ_n}{n\geq1}$ are equivalent and both imply $X<1$ almost surely,
  hence also the strong mixing property of the the \levy/
  transformation.
\end{theorem}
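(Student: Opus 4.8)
The plan is to prove the equivalence of the two tightness conditions first, and then derive $X<1$ a.s.\ from either of them, invoking Theorem~\ref{prop:8} for the final conclusion. The key link between the families $\set{nZ_n}{n\geq1}$ and $\set{x\nu(x)}{x\in(0,1)}$ is that $\nu(x)$ and $Z_n$ are, roughly, generalized inverses of one another along the stationary sequence $(\abs*{\bn_1})_{n\geq0}$: since $Z_n=\min_{0\leq k<n}\abs*{\bn[k]_1}$ is nonincreasing in $n$ and $\nu(x)=\inf\set*{n\geq0}{\abs*{\bn_1}<x}$, one has the duality $\set{\nu(x)>n}=\set{Z_{n+1}\geq x}$ (up to the harmless distinction between $<$ and $\leq$, which I will have to track carefully but which causes no real trouble because $\abs*{\bn_1}$ has a continuous distribution). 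Consequently $Z_{\nu(x)+1}<x\leq Z_{\nu(x)}$, and symmetrically $\nu(Z_{n}-)\leq n\leq\nu(Z_{n+1})$ or something in that spirit. First I would use this to show $\set{x\nu(x)>K}$ and $\set{nZ_n>K'}$ are, for suitably related $K,K'$, contained in one another's translates, so that $\sup_x\P{x\nu(x)>K}$ and $\sup_n\P{nZ_n>K'}$ go to zero together. A clean way to package this: on the event $\set{nZ_n>\lambda}$ we have $Z_n>\lambda/n$, hence $\nu(\lambda/n)>n-1$ (since $Z_n>\lambda/n$ forces $\abs*{\bn[k]_1}>\lambda/n$ for $k<n$), i.e.\ $(\lambda/n)\nu(\lambda/n)>(\lambda/n)(n-1)=\lambda(1-1/n)$; taking suprema and letting $\lambda\to\infty$ gives one direction, and the reverse is entirely analogous starting from $\set{x\nu(x)>\lambda}$.

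Next, assuming tightness of $\set{nZ_n}{n\geq1}$, I would prove $X=\liminf_n Z_{n+1}/Z_n<1$ almost surely. The natural route is by contradiction via the $0$–$1$ structure of Theorem~\ref{prop:8}: $X$ is $\lT$-invariant and $\smallset{0,1}$-valued, so if $X<1$ fails on a positive-measure set then $X=1$ a.s. Suppose $X=1$ a.s.\ but tightness holds. Because $(Z_n)$ is nonincreasing and positive (by Corollary~\ref{cor:11} applied at $t=1$, $\inf_n\abs*{\bn_1}$ could be $0$; but $Z_n>0$ for each fixed $n$ a.s.\ since $\abs*{\bn[k]_1}>0$ a.s.), $X=1$ means $Z_{n+1}/Z_n\to1$, so $\log Z_n$ decreases with increments $\log(Z_n/Z_{n+1})\to0$. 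I want to show this forces $nZ_n$ to be non-tight, i.e.\ $nZ_n\to\infty$ in probability along a subsequence, or rather that $\sup_n\P{nZ_n>K}$ cannot tend to $0$. The idea: if the multiplicative increments $\rho_n:=Z_n/Z_{n+1}\geq1$ satisfy $\rho_n\to1$ a.s., then $Z_n=Z_1/\prod_{k=1}^{n-1}\rho_k$ and $nZ_n = nZ_1/\prod_{k=1}^{n-1}\rho_k$; for $nZ_n$ to remain bounded in probability we would need $\prod_{k=1}^{n-1}\rho_k$ to grow at least linearly, i.e.\ $\frac1{n}\sum_{k=1}^{n-1}\log\rho_k\gtrsim \frac{\log n}{n}$ on a typical path — but I expect the stationarity of the underlying sequence to be used here through an ergodic-type or Borel–Cantelli argument to show that if $\rho_n\to1$ then $\sum\log\rho_k=o(\log n)$ with positive probability, contradicting tightness. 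I would make this precise by exploiting that $(\abs*{\bn[k]_1})_{k\geq0}$ is strongly stationary, so the ``record-like'' quantity $Z_n$ and the hitting times $\nu(x)$ inherit regularity; in particular, I would translate $X=1$ into a statement that $\nu(x)/\nu(x/2)\to1$ or that $\nu$ grows faster than any power of $1/x$ along a.e.\ path, contradicting $\sup_x\P{x\nu(x)>K}\to0$.

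The main obstacle, as I see it, is exactly the implication ``tightness $\Rightarrow X<1$ a.s.'', i.e.\ ruling out the degenerate case $X=1$ under a tightness hypothesis. The equivalence of the two families and the reduction ``$X<1$ a.s.\ $\Rightarrow$ strong mixing'' (the latter is immediate from Theorem~\ref{prop:8}, cases (i)–(iii)) are bookkeeping; the real content is a quantitative statement that a tight family $\set{nZ_n}$ is incompatible with $Z_{n+1}/Z_n\to1$. I would try to argue it as follows: fix $K$ with $p:=\sup_n\P{nZ_n>K}<1/2$ (possible by tightness). Then $\P{Z_n\leq K/n}\geq 1/2$ for all $n$. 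On the other hand $\set{Z_n\leq K/n}\subset\set{\nu(K/n+)\leq n}$ roughly, so $\P{\nu(K/n)\leq n}\geq1/2$, meaning the sequence $(\abs*{\bn[k]_1})_k$ enters the $(K/n)$-ball by time $n$ with probability $\geq1/2$. Combined with stationarity and a subadditivity/renewal argument on $\nu$, this should give $\nu(x)=O_{\P}(1/x)$ which then yields, via the duality, that $Z_n\cdot n$ is tight and more importantly that $\liminf Z_{n+1}/Z_n<1$: for if the ratio tended to $1$, the inter-record gaps of the stationary sequence would be $o(1)$ multiplicatively, forcing $\nu(x)$ to blow up super-polynomially in $1/x$ on a positive-probability set. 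Turning this heuristic into a rigorous contradiction — pinning down the precise renewal/stationarity input — is where the work lies; I would look to the arguments already developed for Theorems~\ref{prop:7} and~\ref{prop:8} (and Theorem~\ref{thm:XY}, which gives $X=1/(1+Y)$) to supply the missing estimates, since those proofs analyze precisely the rate at which $Z_n$ and $1-\gamma_n^*$ shrink.
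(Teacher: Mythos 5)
Your first paragraph (the equivalence of the two tightness conditions via the duality $\event{\nu(x)>n}=\event{Z_{n+1}\geq x}$ and the monotonicity of $\nu$) matches the paper's argument and is fine. The gap is in the second step, and it is not merely technical: the statement you are trying to establish there --- that tightness of $\set{nZ_n}{n\geq1}$ is incompatible with $Z_{n+1}/Z_n\to1$ --- is false at the level of deterministic sequences. Take $Z_n=c/n$: then $Z_{n+1}/Z_n=n/(n+1)\to1$, so ``$X=1$'', while $nZ_n\equiv c$ is perfectly tight. Consequently no argument of the form ``$\rho_n\to1$ forces $\sum_k\log\rho_k=o(\log n)$, hence $nZ_n$ cannot stay bounded in probability'' can succeed; the implication $\rho_n\to1\Rightarrow\sum_k\log\rho_k=o(\log n)$ is itself wrong (e.g.\ $\log\rho_k=1/k$). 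Any correct proof must therefore use the distribution of the underlying sequence, not just monotonicity, stationarity and a renewal heuristic. A smaller structural slip: you cannot reduce to ``$X=1$ a.s.'' via the $0$--$1$ structure of Theorem~\ref{prop:8}, since its case (ii) explicitly allows $0<\P{X=1}<1$; one must show $\P{X=1}=0$ directly.

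The mechanism the paper uses, and which is absent from your plan, is anti-concentration plus a union bound, exploiting that each $\bn[k]_1$ is marginally standard normal. Through the deterministic identity $X=\liminf_{x\to0^+}\abs*{\bn[\nu(x)]_1}/x$ (your generalized-inverse duality pushed one step further, since $Z_{\nu(x)+1}=\abs*{\bn[\nu(x)]_1}$), the event $\event{X>1-\delta}$ forces, for all small $x$, the first iterate entering $(-x,x)$ to land in the thin annulus $(x(1-\delta),x)\cup(-x,-x(1-\delta))$ of total length $2\delta x$. On $\event{\nu(x)\leq K/x}$ there are at most $K/x+1$ candidates for which iterate this is, and each satisfies $\P{1-\delta<\abs*{\bn[k]_1}/x<1}\leq\delta x$ by the bound $1/\sqrt{2\pi}$ on the normal density; hence $\P{X>1-\delta}\leq\sup_{x\in(0,1)}\P{x\nu(x)>K}+(K+1)\delta$. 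Choosing $K=K_\eps$ by tightness and letting $\delta\to0$ gives $\P{X=1}\leq\eps$ for every $\eps>0$. This quantitative input is exactly what your ergodic/renewal heuristic lacks, and without it the second half of your plan does not go through.
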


For the sake of completeness we state the next corollary, which is just an
easy application of the Markov inequality.
\begin{corollary}\label{cor:tightness}
  If there exists an unbounded, increasing function
  $f:[0,\infty)\to[0,\infty)$  such that $\sup_{x\in(0,1)}
  \E{f(x\nu(x))}<\infty$ or $\sup_n \E{f(nZ_n)}<\infty$  
  then the \levy/ transformation is strongly mixing.

  In particular, if $\sup_{x\in(0,1)} \E{x\nu(x)}<\infty$ or
  $\sup_n\E{nZ_n}<\infty$ then the \levy/ transformation is strongly mixing.
\end{corollary}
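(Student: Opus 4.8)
The plan is to read this corollary off directly from Theorem~\ref{thm:tightness}: the hypothesis of that theorem is the tightness of one of the two families, so it suffices to show that each moment bound forces the corresponding family to be tight. The only tool needed is Markov's inequality, exploiting that $f$ is \emph{increasing} and \emph{unbounded}; the hypothesis about strong mixing is then inherited from Theorem~\ref{thm:tightness} without further work.

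First I would treat the family $\set{x\nu(x)}{x\in(0,1)}$ under the assumption $\sup_{x\in(0,1)}\E{f(x\nu(x))}=C<\infty$. Since $x\nu(x)\geq0$ and $f$ is non-decreasing, for every level $K$ one has the inclusion of events $\event{x\nu(x)>K}\subseteq\event{f(x\nu(x))\geq f(K)}$, whence Markov's inequality yields
\begin{displaymath}
  \P{x\nu(x)>K}\leq\frac{\E{f(x\nu(x))}}{f(K)}\leq\frac{C}{f(K)}
\end{displaymath}
for all $K$ large enough that $f(K)>0$. Taking the supremum over $x\in(0,1)$ and letting $K\to\infty$, the monotonicity and unboundedness of $f$ give $f(K)\to\infty$, so $\sup_{x}\P{x\nu(x)>K}\to0$. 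This is precisely the tightness of $\set{x\nu(x)}{x\in(0,1)}$, and Theorem~\ref{thm:tightness} then gives $X<1$ almost surely and hence strong mixing. The family $\set{nZ_n}{n\geq1}$ is handled by the identical computation; moreover Theorem~\ref{thm:tightness} already records that the two tightness properties are equivalent, so either moment hypothesis is enough.

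For the ``in particular'' clause I would simply apply the above with the choice $f(t)=t$, which is increasing and unbounded, so that $\E{f(x\nu(x))}=\E{x\nu(x)}$ and the hypothesis reduces to $\sup_{x\in(0,1)}\E{x\nu(x)}<\infty$ (respectively $\sup_n\E{nZ_n}<\infty$).

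There is essentially no obstacle here; the statement is routine once Theorem~\ref{thm:tightness} is in hand. The only points requiring a little care are that $f$ is assumed merely non-decreasing (not strictly increasing or continuous), which is why I use the event inclusion rather than an equality $\event{x\nu(x)>K}=\event{f(x\nu(x))>f(K)}$, and that $f(K)$ is eventually strictly positive, which is forced by $f$ being increasing and unbounded—a non-decreasing function that vanished identically could not be unbounded.
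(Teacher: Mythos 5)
Your proof is correct and is exactly the argument the paper intends: the paper gives no separate proof of this corollary, remarking only that it is ``just an easy application of the Markov inequality'' to deduce tightness and invoke Theorem~\ref{thm:tightness}, which is precisely what you do. Your care about $f$ being merely non-decreasing and eventually strictly positive is sound but not a point of divergence.
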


\section{Proofs}\label{sec:3}

\subsection{General results}\label{sec:3.1}  

First, we characterize strong mixing
and ergodicity of measure-preserving transformation over a Polish
space. This will be the key to prove Theorem~\ref{thm:cf}.
Although it seems 
to be natural, the author was not able to locate it in the literature.
\begin{proposition}\label{prop:12}
  Let $(\Omega,\B,\P,T)$ be a measure-preserving system, where $\Omega$ is a
  Polish space and $\B$ is its Borel $\sigma$-field. Then 
  \begin{rlist}
  \item $T$ is strongly
    mixing if and only if $\P\circ(T^0,T^n)^{-1}\toby\to{w} \P\otimes\P$.
    \smallskip
  \item $T$ is ergodic 
    if and only if $\frac1n\sum_{k=0}^{n-1} \P\circ(T^0,T^{k})^{-1}\toby\to{w}
    \P\otimes\P$. 
  \end{rlist}
\end{proposition}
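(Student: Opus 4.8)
The plan is to work with the standard characterizations of strong mixing and ergodicity in terms of $L^2$-correlations and then upgrade/reformulate them as statements about weak convergence of the joint laws on $\Omega\times\Omega$. Recall that $T$ is strongly mixing iff $\E{f\cdot(g\circ T^n)}\to\E f\,\E g$ for all $f,g\in L^2(\P)$ (equivalently for all bounded measurable $f,g$), and $T$ is ergodic iff the Ces\`aro averages $\frac1n\sum_{k=0}^{n-1}\E{f\cdot(g\circ T^k)}\to\E f\,\E g$ for all such $f,g$. So I would first record these classical facts (they follow from density of simple functions and the definition via sets $A,B\in\B$). The quantity $\E{f\cdot(g\circ T^n)}$ is exactly $\int_{\Omega\times\Omega} f(x)g(y)\,\d\bigl(\P\circ(T^0,T^n)^{-1}\bigr)(x,y)$, and $\E f\,\E g=\int f\otimes g\,\d(\P\otimes\P)$, so the correlation condition says precisely that $\int f\otimes g\,\d\mu_n\to\int f\otimes g\,\d(\P\otimes\P)$ where $\mu_n=\P\circ(T^0,T^n)^{-1}$; similarly for the Ces\`aro-averaged measures $\bar\mu_n=\frac1n\sum_{k=0}^{n-1}\P\circ(T^0,T^k)^{-1}$ in the ergodic case.

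For the direction ``mixing/ergodic $\Rightarrow$ weak convergence'': the measures $\mu_n$ all have both marginals equal to $\P$ (since $T$ is measure-preserving), hence the family $\{\mu_n\}$ is tight on $\Omega\times\Omega$ — this is where Polishness of $\Omega$ enters, via tightness of the single measure $\P$ (Ulam's theorem) and the fact that a product of tight families is tight. Given tightness, to prove $\mu_n\wto\P\otimes\P$ it suffices to check $\int\varphi\,\d\mu_n\to\int\varphi\,\d(\P\otimes\P)$ for $\varphi$ in a class that is convergence-determining together with tightness; functions of the form $\varphi(x,y)=f(x)g(y)$ with $f,g$ bounded continuous generate (by Stone–Weierstrass on compacts, combined with tightness) enough to conclude, and for those $\varphi$ the convergence is exactly the mixing hypothesis. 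The Ces\`aro case is identical since each $\bar\mu_n$ again has marginals $\P$, so $\{\bar\mu_n\}$ is tight, and the ergodicity hypothesis gives $\int f\otimes g\,\d\bar\mu_n\to\E f\,\E g$.

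For the converse direction ``weak convergence $\Rightarrow$ mixing/ergodic'': if $\mu_n\wto\P\otimes\P$ then $\int f\otimes g\,\d\mu_n\to\E f\,\E g$ for all bounded \emph{continuous} $f,g$; one then extends from bounded continuous to bounded measurable (equivalently, to indicators of Borel sets) by a routine approximation argument — e.g. using that $\P$ is a Radon measure on a Polish space so indicators of Borel sets are $L^1(\P)$-approximable by bounded continuous functions, together with the uniform bound $\abs{\int f\otimes g\,\d\mu_n}\le\norm f_\infty\norm g_\infty$ and the equality of marginals to control the error terms. This recovers the defining condition $\P{A\cap T^{-n}B}\to\P A\,\P B$; the Ces\`aro version is the same. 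I expect the main obstacle, such as it is, to be the bookkeeping in this last approximation step — passing from the test-function class of weak convergence to arbitrary Borel sets uniformly in $n$ — rather than anything conceptually deep; tightness in the forward direction is immediate once one invokes Polishness, and everything else is a restatement.
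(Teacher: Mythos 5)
Your proposal is correct, but the forward implication travels a genuinely different road from the paper's. The paper isolates a standalone lemma (Proposition~\ref{prop:16}): for probability measures on $\Omega\times\Omega$ with fixed marginals, weak convergence is equivalent to convergence on all measurable rectangles. Its forward direction verifies the portmanteau condition $\limsup_n\mu_n(F)\leq\mu(F)$ directly --- covering a compact $F$ by finitely many open rectangles inside an open neighbourhood, disjointifying them to use the rectangle hypothesis, then passing to general closed $F$ via tightness of the marginals. You instead invoke Prokhorov compactness (tightness of $\smallset{\mu_n}$ again coming from the fixed marginals) and identify every subsequential limit through the convergence-determining algebra of products $f\otimes g$ of bounded continuous functions. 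Your route trades the fiddly rectangle-covering combinatorics for Prokhorov's theorem plus a Stone--Weierstrass/measure-determining-class argument; the paper's is self-contained modulo the portmanteau theorem and is stated at the natural generality (arbitrary limit $\mu$ with matching marginals, not only $\P\otimes\P$). Your converse --- approximating indicators by bounded continuous functions in $L^1(\P)$ and controlling the error uniformly in $n$ through the marginals --- is the same idea as the paper's regularity sandwich $F_i\subset A_i\subset G_i$, phrased with functions instead of sets; just make sure to record the uniform bound coming from the marginals so the approximation really is uniform in $n$, as you indicate.
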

Both part of the statement follows obviously from the following common
generalization.

\begin{proposition}\label{prop:16}
  Let $\Omega$ be a Polish space and $\mu_n,\mu$ be probability measures
  on the product $(\Omega\times\Omega,\B\times\B)$, where $\B$ is a Borel
  $\sigma$--field of $\Omega$. 

  Assume that for all $n$ the marginals of
  $\mu_n$ and $\mu$ are the same, that is for $A\in\B$ we have
  $\mu_n(A\times\Omega)=\mu(A\times\Omega)$ and
  $\mu_n(\Omega\times A)=\mu(\Omega\times A)$.
  
  Then $\mu_n\wto\mu$ if and only if $\mu_n(A\times B)\to\mu(A\times B)$
  for all $A,B\in\B$.
\end{proposition}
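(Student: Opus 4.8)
The plan is to establish the forward implication first, since it is the substantive one, and then to deduce the converse from it by a routine compactness argument. Throughout, write $\mu^1$ and $\mu^2$ for the common --- and, crucially, fixed --- marginals of the $\mu_n$ and of $\mu$. For the forward implication I would assume $\mu_n\wto\mu$ and reduce the assertion to the Portmanteau theorem by approximating the arbitrary Borel sets $A,B$ by sets with topologically negligible boundary. The auxiliary lemma I would isolate is: for a finite Borel measure $m$ on $\Omega$, every Borel set $A$, and every $\eps>0$, there is a relatively open set $A'$ with $m(\partial A')=0$ and $m(A\triangle A')<\eps$. To prove it, use that $C_b(\Omega)$ is dense in $L^1(m)$ (finite Borel measures on a metric space are outer regular by closed sets) to pick a continuous $g\colon\Omega\to[0,1]$ with $\int|\Isymb_A-g|\,\d m<\eps/3$; the sets $\{g=t\}$ for $t\in[0,1]$ are pairwise disjoint, so $m\{g=t\}=0$ for some $t\in(1/3,2/3)$, and then $A'=\{g>t\}$ is open with $\partial A'\subset\{g=t\}$ null, while $A\triangle A'\subset\{\,|\Isymb_A-g|\ge 1/3\,\}$, so that $m(A\triangle A')\le 3\int|\Isymb_A-g|\,\d m<\eps$ by Markov's inequality.

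Granting this lemma, I would fix $A,B$ and $\eps>0$, and choose $A'$ for $m=\mu^1$ and $B'$ for $m=\mu^2$. Since \emph{every} $\mu_n$ and $\mu$ has marginals $\mu^1,\mu^2$, one gets the bound $|\mu_n(A\times B)-\mu_n(A'\times B')|\le\mu^1(A\triangle A')+\mu^2(B\triangle B')<2\eps$, uniformly in $n$, and likewise with $\mu$ in place of $\mu_n$. Moreover $\partial(A'\times B')\subset(\partial A'\times\Omega)\cup(\Omega\times\partial B')$, which has $\mu$-measure $\le\mu^1(\partial A')+\mu^2(\partial B')=0$; hence $A'\times B'$ is a $\mu$-continuity set and Portmanteau gives $\mu_n(A'\times B')\to\mu(A'\times B')$. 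Combining the three estimates, $\limsup_n|\mu_n(A\times B)-\mu(A\times B)|\le 4\eps$, and letting $\eps\to0$ completes this direction.

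For the converse, assume $\mu_n(A\times B)\to\mu(A\times B)$ for all $A,B\in\B$. The single measures $\mu^1,\mu^2$ on the Polish space $\Omega$ are tight, so choosing compacts $K_i$ with $\mu^i(K_i^c)<\eps/2$ gives $\mu_n((K_1\times K_2)^c)<\eps$ for all $n$; thus $\{\mu_n\}$ is tight and, by Prokhorov's theorem, relatively compact. If $\mu_{n_k}\wto\nu$ along some subsequence, then $\nu$ has the same marginals $\mu^1,\mu^2$ (push the marginals through the weak limit), so the forward implication already proved yields $\mu_{n_k}(A\times B)\to\nu(A\times B)$ for all $A,B$; comparing with the hypothesis, $\nu$ and $\mu$ agree on the $\pi$-system of measurable rectangles, hence $\nu=\mu$ by Dynkin's lemma. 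Since every subsequential weak limit equals $\mu$, relative compactness gives $\mu_n\wto\mu$.

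The only genuine work lies in the approximation lemma, together with the point that the \emph{common, fixed} marginals are precisely what make the error in replacing $A\times B$ by $A'\times B'$ uniform in $n$ --- without equal marginals the proposition is false. A minor but real subtlety, handled above, is that convergence on the generating $\pi$-system of rectangles does not on its own force weak convergence, which is why the converse is routed through tightness and the forward implication rather than proved directly.
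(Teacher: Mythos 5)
Your proof is correct, but it takes a genuinely different route from the paper in both directions. For the implication ``$\mu_n\wto\mu$ $\Rightarrow$ convergence on rectangles'', the paper uses inner/outer regularity of the marginals to sandwich $A\times B$ between a closed rectangle $F_1\times F_2$ and an open rectangle $G_1\times G_2$ of nearly the same measure, and then applies the two one-sided portmanteau inequalities; you instead replace $A$ and $B$ by open $\mu^i$-continuity sets via an $L^1$-approximation of the indicator by a continuous function and a well-chosen level set, so that $A'\times B'$ is a $\mu$-continuity set and a single portmanteau clause gives two-sided convergence directly. Both arguments hinge on the same essential point, which you correctly identify: the fixed common marginals make the replacement error uniform in $n$. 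For the converse, the paper argues directly: it verifies $\limsup_n\mu_n(F)\le\mu(F)$ for compact $F$ by covering $F$ with finitely many open rectangles inside an open $G\supset F$, rewriting the union as a disjoint union of rectangles, and passing to the limit term by term, then extends to closed $F$ by tightness of the marginals. Your route is softer: tightness of $\{\mu_n\}$ from the marginals, Prokhorov, identification of every subsequential weak limit with $\mu$ on the $\pi$-system of rectangles (after checking the limit has the same marginals so that the already-proved direction applies). Your argument is shorter and correctly flags the subtlety that convergence on a generating $\pi$-system alone does not imply weak convergence, but it is logically dependent on the other implication, whereas the paper's covering argument is self-contained and more elementary; both are valid.
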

\begin{proof}

  Assume first that $\mu_n(A\times B)\to\mu(A\times B)$ for $A,B\in\B$.
  By portmanteau theorem, see
  \citet[Theorem 2.1]{MR0233396}, it is enough to show that for closed sets
  $F\subset\Omega\times\Omega$ the limiting relation
  \begin{equation}
    \label{eq:portmanteau}
    \limsup_{n\to\infty}\mu_n(F)\leq \mu(F) 
  \end{equation}
  holds.
  To see this, consider first a compact subset $F$ of
  $\Omega\times\Omega$ and an open set $G$ such that $F\subset G$.  
  We can take a finite covering of $F$ with open
  rectangles $F\subset \cup_{i=1}^r A_i\times B_i\subset G$, where
  $A_i,B_i\subset \Omega$ are open. 
  Since the difference of rectangles can be written as finite disjoint union
  of rectangles we can write
  \begin{displaymath}
    (A_i\times B_i)\setminus \bigcup_{k<i} (A_k\times B_k)=
    \bigcup_j (A'_{i,j}\times B'_{i,j}),     
  \end{displaymath}
  where $\set{A'_{i,j}\times B'_{i,j}}{i,j}$ is a finite collection of
  disjoint rectangles. By assumption 
  \begin{displaymath}
    \lim_{n\to\infty} \mu_n\zjel{A'_{i,j}\times B'_{i,j}} =
    \mu\zjel{A'_{i,j}\times B'_{i,j}},
  \end{displaymath}
  which yields
  \begin{displaymath}
    \limsup_{n\to\infty} \mu_n(F)\leq
    \lim_{n\to\infty}\mu_n\zjel{\bigcup_{i}(A_{i}\times B_{i})}=
    \mu\zjel{\bigcup_{i}(A_{i}\times B_{i})}\leq \mu(G).
  \end{displaymath}
  Taking infimum over $G\supset F$, \eqref{eq:portmanteau} follows for compact
  sets. 

  For a  general closed $F$, let $\eps>0$ and denote by
  $\mu^1(A)=\mu(A\times\Omega)$,  
  $\mu^2(A)=\mu(\Omega \times A)$ the marginals of $\mu$.
  By the tightness of $\smallset{\mu^1,\mu^2}$, one can find a compact set $C$
  such that $\mu^1(C^c)=\mu(C^c\times\Omega)\leq \eps$ and 
  $\mu^2(C^c)=\mu(\Omega\times C^c)\leq \eps$.
  Then
  \begin{displaymath}
    \mu_n(F)\leq \mu_n(F\cap (C\times C)) +2\eps. 
  \end{displaymath}
  Since $F'=F\cap (C\times C)$ is compact, we have that
  \begin{displaymath}
    \limsup_{n\to\infty} \mu_n(F)\leq 
    \limsup_{n\to\infty}\mu_n(F') + 2\eps
    \leq \mu(F')+2\eps \leq \mu(F)+2\eps.
  \end{displaymath}
  Letting $\eps\to0$ finishes this part of the proof.
  
  \bigskip
  
  For the converse, note that  $\mu^1$ and $\mu^2$ are regular since $\Omega$
  is a Polish space and $\mu^1$, $\mu^2$ are probability measures on its Borel
  $\sigma$-field.
  
  Fix $\eps>0$. For $A_i\in\B$ one can find, using the regularity of
  $\mu^i$, closed sets  $F_i$ and open sets $G_i$ 
  such that $F_i\subset A_i\subset G_i$ and $\mu^i(G_i\setminus F_i)\leq
  \eps$. Then
  \begin{displaymath}
    (G_1\times G_2) \setminus (F_1\times F_2)\subset
    ((G_1\setminus F_1)\times\Omega) \cup (\Omega\times(G_2\setminus F_2))
  \end{displaymath}
  yields that
  \begin{align*}
    \mu_n(A_1\times A_2)\leq \mu_n(G_1&\times G_2)\leq \mu_n(F_1\times
    F_2)+2\eps,\\
    \mu_n(A_1\times A_2)\geq \mu_n(F_1&\times F_2)\geq \mu_n(G_1\times
    G_2)-2\eps,
  \end{align*}
  hence by portmanteau theorem $\mu_n\wto \mu$ gives
  \begin{align*}
    \limsup_{n\to\infty}\mu_n(A_1\times A_2)\leq
    \mu(F_1&\times F_2)+2\eps\leq \mu(A_1\times A_2) +2\eps\\
    \liminf_{n\to\infty}\mu_n(A_1\times A_2)\geq
    \mu(G_1&\times G_2)-2\eps\geq\mu(A_1\times A_2)-2\eps.
  \end{align*}
  Letting $\eps\to 0$ we get $\lim_{n\to\infty}\mu_n(A_1\times
  A_2)=\mu(A_1\times A_2)$. 
\end{proof}

\subsection{Proof of Theorem~\ref{thm:cf}}
\label{sec:3.2}

\begin{proof}[Proof of the sufficiency of the conditions in
  Theorem~\ref{thm:cf}]  
  We start with the strong mixing 
  case. We want to show that 
  \begin{equation}\label{eq:X_n def}
    X_n(t)=\int_0^t \hn(s)\d s\pto 0,\quad\text{for all $t\geq0$}, 
  \end{equation}
  where $\hn(s)$ is given by \eqref{eq:Xn def}, implies the strong mixing of
  the integral-type measure-preserving  transformation $T$.  

  Actually, we show by characteristic function method that \eqref{eq:X_n def} 
  implies that the finite dimensional marginals of $(\beta,\bn)$  converge in
  distribution to the appropriate marginals of a $2d$--dimensional Brownian
  motion. 
  Then, since the sequence $(\beta,\bn)_{n\geq 0}$ is tight, not only the
  finite dimensional marginals but the sequence of
  processes $(\beta,\bn)$ converges in distribution to a  $2d$--dimensional
  Brownian motion. By Proposition~\ref{prop:12} this is equivalent with the
  strong mixing property of $T$. 

  Let $\bt=(t_1,\dots,t_k)$ be a finite subset of $[0,\infty)$. 
  Then the characteristic
  function of
  $(\beta_{t_1},\dots,\beta_{t_k},\bn_{t_1},\dots,\bn_{t_k})$ 
  can be written as 
  \begin{equation}\label{eq:11.1/3}
    \begin{split}
      \phi_n(\alpha)&
      =\E{\exp{i\int_0^\infty f\d\beta+i\int_0^\infty g\d\bn}}
      \\ &
      =\E{\exp{i\int_0^\infty (f+g\hn)\d\beta}},  
    \end{split}
  \end{equation}
  where $f,g$ are deterministic step function obtained from the time vector
  $\bt$ and $\alpha=(a_1,\dots,a_k,b_1,\dots,b_k)$; here $a_i,b_j$ are
  $d$-dimensional row vectors and
  \begin{displaymath}
    f=\sum_{j=1}^k a_j \I[{[0,t_j]}],\quad\text{and}\quad    
    g=\sum_{j=1}^k b_j \I[{[0,t_j]}] .
  \end{displaymath}
  We have to  show that  
  \begin{displaymath}
    \phi_n(\alpha)\to \phi(\alpha)=
    \exp{-\frac12\int_0^\infty  (\abs{f}^2+\abs{g}^2)} \quad
    \text{as  $n\to\infty$.} 
  \end{displaymath}
  Using that $\bn=\int \hn \d\beta$ and
  \begin{displaymath}
    M_t=\exp{i \int_0^t (f(s)+g(s)\hn(s))\d\beta_s+
      \frac12\int_0^t \abs{f(s)+g(s)\hn(s)}^2\d s} 
  \end{displaymath}
  is a uniformly integrable martingale starting from 1, 
  we obtain that 
  $\E{M_\infty}=1$ and
  \begin{multline}
    \label{eq:11.2/3}
    \phi(\alpha)=\phi(\alpha)\E{M_\infty}=\\
    \E{\exp{i \int_0^\infty
        (f(s)+g(s)\hn(s))\d\beta_s+\int_0^\infty g(s)\hn(s)f^T(s)\d s}}
  \end{multline}
  As $\exp\cbr*{i\int_{[0\infty)}(f+g\hn)\d \beta}$ is of modulus one, we
  get from \eqref{eq:11.1/3} and \eqref{eq:11.2/3} that
  \begin{equation}
    \label{eq:11.3/3}
    \abs{\phi(\alpha)-\phi_n(\alpha)}\leq
    \E{\abs{\exp{\int_0^\infty g(s)\hn(s)f^T(s)\d s}-1}}.
  \end{equation}
  Note that $f^Tg$ is a matrix valued  function of the form
  $f^Tg=\sum_{j=1}^k c_j\I[{[0,t_j]}]$, hence
  \begin{displaymath}
    \int_0^\infty g(s)\hn(s)f^T(s)\d s=
    \int_0^\infty \tr{f^T(s)g(s)\hn(s)}\d s=\sum_{j=1}^k \tr{c_jX_n(t_j)},
  \end{displaymath}
  and $\abs*{\int_0^\infty g(s)\hn(s)f^T(s)\d s}\leq
  M=\int_0^\infty\abs{f(s)}\abs{g(s)}\d s<\infty$. 
  With this notation, 
  using $\abs{e^x-1}\leq \abs{x}e^{\abs{x}}$ for
  $x\in\real$ and $\abs{\tr(ab)}\leq \norm{a}_{HS}\norm{b}_{HS}$, 
  we can continue \eqref{eq:11.3/3} to get 
  \begin{equation}\label{eq:phi_n-phi}
    \begin{split}
      \abs{\phi_n(\alpha)-\phi(\alpha)}& \leq 
      \E{\abs{\exp{\int_0^\infty g(s)\hn(s)f^T(s)\d s}-1}}\\ & \leq
      e^M\E{\abs{\sum_{j=1}^k \tr{c_jX_n(t_j)}}} 
      \\ &\leq
      e^M\sum_{j=1}^k \norm{c_j}_{HS}\E{\norm{X_n(t_j)}_{HS}}.  
    \end{split}
  \end{equation}
  Since $\norm{X_n(t_j)}_{HS}\leq t_j\sqrt{d}$ and $X_n(t_j)\pto0$ by
  assumption, we obtained that  $\phi_n(\alpha)\to\phi(\alpha)$ and the
  statement follows. 

  To prove (\ref{it:2}) we use the same method. We introduce
  $\kappa_n$ which is a random variable independent of the sequence
  $(\bn)_{n\in\Z}$ and uniformly distributed  on
  $\smallset{0,1,\dots,n-1}$. Ergodicity can be formulated as
  $(\beta,\bn[\kappa_n])$ converges in distribution to a $2d$--dimensional
  Brownian motion. The joint characteristic function $\psi_n$ of
  $(\beta_{t_1},\dots,\beta_{t_k},\bn[\kappa_n]_{t_1},\dots,\bn[\kappa_n]_{t_k})$ 
  can be expressed, similarly as above,
  \begin{displaymath}
    \psi_n=\frac1n\sum_{\ell=0}^{n-1}\phi_\ell
  \end{displaymath}
  where $\phi_\ell$ is as in the first part of the proof.
  Using the estimation \eqref{eq:phi_n-phi} obtained in the first part
  \begin{align*}
    \abs{\phi(\alpha)-\psi_n(\alpha)}& \leq  
    \frac1n\sum_{\ell=0}^{n-1}\abs{\phi(\alpha)-\phi_\ell(\alpha)} \\ & \leq
    \frac{e^M}{n}\sum_{\ell=0}^{n-1}\sum_{j=0}^k
    \norm{c_j}_{HS}\E{\norm{X_{\ell}(t_j)}_{HS}} \\ & =
    e^M\sum_{j=1}^k\norm{c_j}_{HS}
    \E{\frac{1}{n}\sum_{\ell=0}^{n-1}{\norm{X_{\ell}(t_j)}_{HS}}}.
  \end{align*}
  Now $\abs{\phi(\alpha)-\psi_n(\alpha)}\to0$ follows from our
  condition in part
  (\ref{it:2}) by the Cauchy--Schwartz inequality, since 
  \begin{displaymath}
    \zjel{\frac1n\sum_{\ell=0}^{n-1}\norm{X_\ell(t_j)}_{HS}}^2\leq
    \frac1n\sum_{\ell=0}^{n-1}\norm{X_\ell(t_j)}_{HS}^2 
    \pto 0.
  \end{displaymath}
  and $\frac1n\sum_{\ell=0}^{n-1}\norm{X_\ell(t_j)}^2_{HS}\leq t^2_jd$.
\end{proof}

\begin{proof}[Proof of the necessity of the conditions in Theorem~\ref{thm:cf}]
  Recall that the quad\-ratic variation of an $m$--dimensional martingale
  $M=(M_1,\dots,M_m)$ is a matrix valued process whose $(j,k)$ entry is
  $\q{M_j,M_k}$.  
  The proof of the following fact can be found in \cite{MR959133}, see
  corollary 6.6 of Chapter VI. 

  \smallskip  

  { \narrower\it
    \noindent Let $(\Mn)$ be a  sequence of $m$--dimensional,  continuous
    local martingales. If $\Mn\dto M$ then  $(\Mn,\q{\Mn})\dto (M,\q{M})$.  
    \par
  }
  
  \medskip
  
  By enlarging the probability space, we may assume that there is a random
  variable $U$, which  is uniform on $(0,1)$ and independent of $\beta$. Denote
  by $\kappa_n=[nU]$ the integer part of $nU$. Let $\G$ be the smallest
  filtration satisfying the usual hypotheses, making $U$ $\G_0$ measurable and
  $\beta$ adapted to $\G$.  
  Then $\beta$ is a Brownian motion in $\G$;  $(\beta,\bn)$ and
  $(\beta,\bn[\kappa_n])$ are continuous martingales in $\G$.
  The quadratic covariations are
  \begin{displaymath}
    \q*{\bn,\beta}_t=\int_0^t \hn(s)ds=X_n(t),\quad\text{and}\quad
    \q*{\bn[\kappa_n],\beta}_t=\sum_{k=0}^{n-1}\I{\kappa_n=k}X_k(t).
  \end{displaymath}
  
  By Proposition \ref{prop:2}, the strong mixing property and the ergodicity
  of $T$ are respectively equivalent to the convergence in distribution of 
  $(\beta, \bn)$ and $(\beta, \bn[\kappa_n])$  to a $2d$--dimensional Brownian
  motion. 

  By the fact just recalled, the strong mixing property of $T$ implies that
  $\q*{\bn,\beta}_t\dto 0$, while its ergodicity ensures that  
  $\q*{\bn[\kappa_n],\beta}_t\dto0$ for every $t \geq0$.  Since the limit is
  deterministic, the convergence also holds in probability.  
  The ``only if'' part of (\ref{it:1}) follows immediately.

  For the ``only if'' part of (\ref{it:2}) we add that
  \begin{displaymath}
    \norm*{\q*{\bn[\kappa_n],\beta}_t}_{HS}^2=
    \norm{\sum_{k=0}^{n-1} \I{\kappa_n=k} X_k(t)}^2_{HS}=
    \sum_{k=0}^{n-1} \I{\kappa_n=k}\norm{X_k(t)}_{HS}^2
  \end{displaymath}
  Since $\norm*{X_k(t)}^2_{HS}\leq t^2 d$ the convergence in probability of
  $\q*{\bn[\kappa_n],\beta}_t$ to zero is also a convergence of
  $\norm{\q*{\bn[\kappa_n],\beta}_t}_{HS}^2$ to zero in $L^1(\P)$, which
  implies the convergence in $L^1(\P)$ to zero of the conditional expectation
  \begin{displaymath}
    \E{\norm*{\q*{\bn[\kappa_n],\beta}_t}_{HS}^2|\sigma(\beta)}=
    \frac1n\sum_{k=0}^{n-1} \norm{X_k(t)}_{HS}^2.
  \end{displaymath}
  The ``only if'' part of (\ref{it:2}) follows.
\end{proof}

\subsection{First results for the \levy/ transformation}
\label{sec:3.3}

We will use the following property of the \levy/ transformation many times.
Recall that $\lT^n\beta=\beta\circ \lT^n$ is also denoted by $\bn$.
We will also use the notation $\hn(t)=\prod_{k=0}^{n-1} \sign (\bn[k]_t)$
for $n\geq 1$ and $\hn[0]=1$. 

\begin{lemma}\label{l1}On an almost sure event the following property holds:
  
  \noindent For any interval $I\subset[0,\infty)$, point $a\in I$ and integer
  $n>0$,  if  
  \begin{equation}\label{l1:cond}
    \sup_{t\in I}\abs*{\beta_t-\beta_a}< \min_{0\leq k<n}\abs*{(\lT^k\beta)_a}   
  \end{equation}
  then
  \begin{rlist}
  \item\label{l1:it1} $\lT^k\beta$ has no zero in $I$,  for $0\leq k\leq n-1$,
  \item\label{l1:it2}
    $(\lT^k\beta)_t-(\lT^k\beta)_a=\hn[k] (a)\zjel{\beta_t-\beta_a}$ for
    $t\in I$ 
    and $0\leq k\leq n$.  

    In particular,
    $\abs*{(\lT^k\beta)_t-(\lT^k\beta)_a}=\abs*{\beta_t-\beta_a}$ for  $t\in I$
    and $0\leq k\leq n$. 
  \end{rlist}
\end{lemma}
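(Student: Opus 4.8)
The plan is to build the full-measure event on which the stated implication holds, and then to argue by induction on $k$. The natural almost-sure event to work on is the one on which the density theorem \eqref{eq:density} holds, i.e. the zeros of the iterated paths $\bigl(\bigcup_n\{t:(\lT^n\beta)_t=0\}\bigr)$ are dense in $[0,\infty)$; actually for this lemma I expect we only need the (trivial) fact that each individual path $\lT^k\beta$ is continuous, so the exceptional null set is the one where some $\lT^k\beta$ fails to be a well-defined continuous function — a single null set independent of $I$, $a$, $n$. Fix such an $\omega$, an interval $I$, a point $a\in I$ and an integer $n>0$ satisfying the hypothesis \eqref{l1:cond}.

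The heart of the matter is the identity $(\lT^{k+1}\beta)_t-(\lT^{k+1}\beta)_a=\int_a^t\sign\bigl((\lT^k\beta)_s\bigr)\,\d(\lT^k\beta)_s$, which holds because $\lT^{k+1}\beta=\int_0^\cdot\sign\bigl((\lT^k\beta)_s\bigr)\d(\lT^k\beta)_s$. The induction hypothesis at level $k$ (for $k<n$) will be the two assertions: $\lT^j\beta$ has no zero in $I$ for $j<k$, and $(\lT^j\beta)_t-(\lT^j\beta)_a=\hn[j](a)\bigl(\beta_t-\beta_a\bigr)$ for $t\in I$ and $j\le k$. The base case $k=0$ is the trivial identity $\beta_t-\beta_a=\hn[0](a)(\beta_t-\beta_a)$ with $\hn[0]=1$, and there is nothing to check about zeros. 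For the inductive step, assume the hypothesis holds at level $k<n$. First, from $(\lT^k\beta)_t-(\lT^k\beta)_a=\hn[k](a)(\beta_t-\beta_a)$ and $|\hn[k](a)|=1$ we get $\sup_{t\in I}|(\lT^k\beta)_t-(\lT^k\beta)_a|=\sup_{t\in I}|\beta_t-\beta_a|<|(\lT^k\beta)_a|$ by \eqref{l1:cond} (using $k<n$); hence $(\lT^k\beta)_t$ never vanishes on $I$ — it stays strictly on the same side of $0$ as $(\lT^k\beta)_a$ — which gives conclusion \ref{l1:it1} for index $k$ and shows $\sign\bigl((\lT^k\beta)_s\bigr)=\sign\bigl((\lT^k\beta)_a\bigr)$ for all $s\in I$. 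Plugging this constant value of the sign into the stochastic integral above turns it into an ordinary (pathwise) computation: for $t\in I$,
\begin{displaymath}
  (\lT^{k+1}\beta)_t-(\lT^{k+1}\beta)_a
  =\sign\bigl((\lT^k\beta)_a\bigr)\bigl((\lT^k\beta)_t-(\lT^k\beta)_a\bigr)
  =\sign\bigl((\lT^k\beta)_a\bigr)\,\hn[k](a)\,(\beta_t-\beta_a),
\end{displaymath}
and since $\hn[k+1](a)=\sign\bigl((\lT^k\beta)_a\bigr)\hn[k](a)$ this is exactly the claim at level $k+1$. Running the induction up to $k=n$ yields \ref{l1:it1} for $0\le k\le n-1$ and \ref{l1:it2} for $0\le k\le n$; the ``in particular'' statement is immediate since $|\hn[k](a)|=1$.

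The main point requiring care is the step where the stochastic integral $\int_a^t\sign\bigl((\lT^k\beta)_s\bigr)\d(\lT^k\beta)_s$ is replaced by a pathwise Stieltjes (in fact, scalar-multiple) integral: this is legitimate on the event in question because on $I$ the integrand is a.s.\ constant, but one should phrase it so that the exceptional null set does not depend on the interval $I$ — e.g.\ by noting that two continuous semimartingales which agree up to a constant multiple on a (random) interval have their It\^o integrals against a constant integrand agreeing there, a statement that can be made once and for all. A secondary subtlety is that \eqref{l1:cond} is used in the form ``$\sup_{t\in I}|\beta_t-\beta_a|<\min_{0\le k<n}|(\lT^k\beta)_a|\le|(\lT^k\beta)_a|$'' for each individual $k<n$, so the single inequality with the minimum on the right feeds the induction at every level. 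No topological recurrence or density input is actually needed here; the lemma is purely a pathwise consequence of the definition of $\lT$ together with continuity of the iterated paths.
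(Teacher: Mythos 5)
Your proof is correct in substance, but it routes the key technical step differently from the paper, and the difference is worth spelling out. You work directly from the stochastic-integral definition $\lT^{k+1}\beta=\int_0^\cdot \sign((\lT^k\beta)_s)\,\d(\lT^k\beta)_s$ and replace the integral by $\sign((\lT^k\beta)_a)\bigl((\lT^k\beta)_t-(\lT^k\beta)_a\bigr)$ once the sign is known to be constant on $I$; as you yourself note, this forces you to manage a null set that a priori depends on the interval $I$ (and the base point $a$), and your proposed repair --- locality of the It\^o integral on stochastic intervals, applied to countably many rational intervals and then extended by continuity --- would have to be carried out explicitly for the ``for any interval $I$'' quantifier to be legitimate. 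The paper avoids this issue entirely by invoking Tanaka's formula: it uses only that $\lT\beta=\abs{\beta}-L$ as an identity of continuous processes and that the points of increase of the local time $L$ are exactly the zeros of $\beta$, both holding for every iterate simultaneously on a single full-measure event $\Omega'$. On $\Omega'$ the argument is then purely deterministic: if $\lT^k\beta$ has no zero in $I$ then $L^{(k)}$ is constant there, so $\lT^{k+1}\beta_t=\sign((\lT^k\beta)_a)(\lT^k\beta)_t-L^{(k)}_a$ pathwise, for every interval at once, with no further null-set bookkeeping. The two proofs also organize the induction slightly differently (the paper proves the increment identity up to the first index $N(I)$ whose iterate vanishes in $I$ and then shows by contradiction that \eqref{l1:cond} forces $n\leq N$, whereas you interleave the no-zero claim and the identity level by level), but that is cosmetic; the substantive divergence is Tanaka plus support of local time versus stochastic-integral locality, and the former is what makes the single exceptional null set come for free.
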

\begin{proof} In the next argument we only use that if $\beta$ is a Brownian
  motion and $L$ is its local time at level zero then the points of increase
  for $L$ is exactly the zero set of $\beta$ and $\lT\beta=\abs{\beta}-L$
  almost surely. Then there is $\Omega'$ of full probability such that on
  $\Omega'$ both properties hold for $\lT^n\beta$ for all $n\geq0$
  simultaneously.

  Let $N=N(I)=\inf\set{n\geq 0}{\text{$\lT^n\beta$ has a zero in $I$}}$.
  Since $\lT$ acts as $\lT\beta=\abs{\beta}-L$, if $\beta$ has no zero in $I$
  we have
  \begin{displaymath}
    \lT\beta_t=\sign(\beta_a)\beta_t-L_a,\quad\text{for $t\in I$}.
  \end{displaymath}
  But, then $\lT\beta_t-\lT\beta_a=\sign(\beta_a)(\beta_t-\beta_a)$ and
  $\abs{\lT\beta_t-\lT\beta_a}=\abs{\beta_t-\beta_a}$  
  for $t\in I$. Iterating it we obtain that
  \begin{equation}\label{eq:n N}
    \begin{split}
      (\lT^k\beta)_t-(\lT^k\beta)_a&=\hn[k](a)\zjel{\beta_t-\beta_a},\\
      \abs{(\lT^k\beta)_t-(\lT^k\beta)_a}&=\abs{\beta_t-\beta_a},  
    \end{split}
    \quad\text{on $\event{k\leq N}$ and for $t\in I$}. 
  \end{equation}
  Now assume that \eqref{l1:cond} holds. Then, necessarily $n\leq N$ as the
  other  possibility  would lead to a contradiction. Indeed, if $N< n$ then
  $N$ is finite, $\lT^N\beta$ has a zero $t_0$ in $I$ and  
  \begin{displaymath}
    0 =
    \abs{\lT^N\beta_{t_0}}=
    \abs{\lT^N\beta_{a}}-\abs{\lT^N\beta_{t_0}-\lT^N\beta_{a}}\geq
    \min_{0\leq k< n} \abs{\lT^k\beta_a}-\sup_{t\in I}\abs{\beta_t-\beta_a}>0.
  \end{displaymath}
  So \eqref{l1:cond} implies that $n\leq N$, which proves (\ref{l1:it1}) by
  the definition of $N$ and also (\ref{l1:it2}) by \eqref{eq:n N}. 
\end{proof}

Combined with the densities of zeros, Lemma~\ref{l1} implies
Corollary~\ref{cor:11} stated above. 
\begin{proof}[Proof of Corollary~\ref{cor:11}]The statement here is that
  $\inf_{n\geq 0} \abs*{(\lT^n\beta)_t}=0$ for all $t\geq0$. 

  Assume  that
  for $\omega\in\Omega$ there is some $t>0$, such that  $\inf_{n\geq 0}
  \abs*{(\lT^n\beta)_t}$ is not zero at $\omega$.
  Then there is a neighborhood $I$ of $t$ such that 
  \begin{displaymath}
    \sup_{s\in I}\abs{\beta_s-\beta_t}<\inf_k\abs{(\lT^k\beta)_t}.  
  \end{displaymath}
  Using Lemma~\ref{l1}, we would get that for this $\omega$ the iterated paths
  $\lT^k\beta(\omega)$, $k\geq 0$ has no zero in $I$. However, 
  since 
  \begin{displaymath}
    \set{t\geq 0}{\exists k,\, (\lT^k\beta)_t=0}
  \end{displaymath}
  is dense in $[0,\infty)$ almost surely by the result of \citet{MR1975087},
  $\omega$ belongs to the exceptional negligible set.
\end{proof}

\begin{proof}[Proof of Proposition~\ref{prop:1}]
  Let $C>0$ and $s\in(0,1)$ as in the statement and assume that $\tau$ is a
  stopping time satisfying (\ref{it:a})-(\ref{it:c}), that is, $s<\tau<1$, and
  for the almost surely finite random index $\nu$ we have $\bn[\nu]_\tau=0$
  and $\min_{0\leq k<\nu} \abs*{\bn[k]_\tau}>C\sqrt{1-\tau}$. Recall that $S$
  denotes the reflection of the trajectories after $\tau$.

  Set $\eps_n=\hn(s)\hn(1)$ for $n>0$  and 
  \begin{displaymath}
    A_{C}=\event{\sup_{t\in[\tau,1]}\abs*{\bn[0]_t-\bn[0]_\tau}\leq
      C\sqrt{1-\tau}}.  
  \end{displaymath}
  We show below that on the event $A_C\cap\event{n>\nu}$, we have
  $\eps_n=-\eps_n\circ S$. Since $S$ preserves the Wiener measure 
  $\P$, this implies that  
  \begin{align*}
    \abs{\E{\eps_n}}=\frac12 \abs{\E{\eps_n+\eps_n\circ S}}
    &\leq \frac12\E{\abs{\eps_n+\eps_n\circ S}}\\
    &=\P{\eps_n=\eps_n\circ S}\\ 
    &\leq \P{A_C^c\cup \event{n\leq \nu}}\leq
    \P{A_C^c}+\P{n\leq \nu}
  \end{align*}
  When $n\to\infty$, this yields
  \begin{displaymath}
    \limsup_{n\to\infty} \abs{\E{\hn(s)\hn(1)}}\leq \P{A_C^c}=
    \P{\sup_{s\in[0,1]}\abs{\beta_s}>C},
  \end{displaymath}
  by the Markov property and the scaling property of the Brownian motion.

  It remains to show that on $A_{C}\cap\event{n>\nu}$ the identity 
  $\eps_n=-\eps_n\circ S$ holds. By definition of $S$, the trajectory of
  $\beta$ and $\beta\circ S$ coincide on $[0,\tau]$, hence $\hn[k]$ and
  $\hn[k]\circ S$ coincide on $[0,\tau]$ for $k> 0$. In particular,
  $\hn[k](\tau)=\hn[k](\tau)\circ S$ and $\hn[k](s)=\hn[k](s)\circ S$ for all
  $k$ since $\tau>s$.

  On the event $A_C$ we can apply Lemma~\ref{l1} with $I=[\tau,1]$, $a=\tau$
  and $n=\nu$ to both $\beta$ and $S\circ\beta$ to get that
  \begin{equation}
    \label{eq:tau,1}
    \begin{split}
      \bn[k]_t-\bn[k]_\tau&=\hn[k](\tau)(\beta_t-\beta_\tau),\\
      \bn[k]_t\circ S-\bn[k]_\tau\circ S&=-\hn[k](\tau)(\beta_t-\beta_\tau),
    \end{split}
    \quad k\leq \nu,\,t\in[\tau,1].
  \end{equation}
  We have used that $\hn[k]_\tau=\hn[k]_\tau\circ S $ and $\beta_t\circ
  S_t-\beta_\tau\circ S=-(\beta_t-\beta_\tau)$ for $t\geq\tau$ by the
  definition of $S$.

  Using that on $A_C$ 
  \begin{displaymath}
    \abs*{\bn[k]_\tau}> C\sqrt{1-\tau}\geq \abs{\beta_1-\beta_\tau},
    \quad\text{for $k<\nu$}
  \end{displaymath}
  we get immediately from \eqref{eq:tau,1} that
  $\sign(\bn[k]_1)=\sign(\bn[k]_1)\circ S $ for $k<\nu$.

  Since $\bn[\nu]_\tau=(\bn[\nu]_\tau)\circ S=0$, for $k=\nu$
  \eqref{eq:tau,1} gives that $\bn[\nu]$ and $\bn[\nu]\circ S$ coincide on
  $[0,\tau]$ and are opposite of each other on $[\tau,1]$. 
  Hence, $\bn[k]$ and $\bn[k]\circ S$ coincide on
  $[0,1]$ for  every $k>\nu$.
  
  As a result on the event $A_C$,
  \begin{displaymath}
    \sign(\bn[k]_1)\circ S =
    \begin{cases}
      \sign(\bn[k]_1),&\text{if $k\neq\nu$},\\
      -\sign(\bn[k]_1), &\text{if $k=\nu$}  
    \end{cases}
  \end{displaymath}
  hence $\hn(1)\circ S=-\hn(1)$  on $A_C\cap\event{n>\nu}$. Since 
  $\hn(s)\circ S =\hn(s)$ for all $n$ we are done.
\end{proof}

\begin{proof}[Proof of Proposition~\ref{prop:2}]
  Let $C>0$ and $s\in(0,1)$. Call $\tau$ the infimum of those time points that
  satisfy (\ref{it:b}) and (\ref{it:c}) of Proposition~\ref{prop:1}
  with $C$ replaced by $2C$, namely $\tau=\inf_n\tau_n$, where
  \begin{align*}
    \tau_n&=\inf\set{t>s}{\bn_t=0,\,\forall k<n,\,
      \abs*{\bn[k]_t}> 2C\sqrt{(1-t)\vee0}}. 
  \end{align*}

  By assumption $\tau_n<1$ for some $n\geq 0$. Furthermore, there exists some
  finite index $\nu$ such that $\tau=\tau_\nu$. Otherwise, there would exist
  a subsequence $(\tau_n)_{n\in D}$ bounded by 1 and converging
  to $\tau$. For every $k$ one has $k<n$  for infinitely many $n\in D$,
  hence $\abs*{\bn[k]_{\tau_n}}\geq 2C\sqrt{1-\tau_n}$ by the choice of $D$. 
  Letting $n\to\infty$
  yields $\abs{\bn[k]_\tau}\geq2C\sqrt{1-\tau}>0$ for every $k$. This can
  happen only with probability zero by Corollary~\ref{cor:11}. 

  As $\nu$ is almost surely finite and $\tau=\tau_\nu$ we get that
  $\bn[\nu]_{\tau}=0$ and 
  \begin{displaymath}
    \inf\set*{\abs*{\bn[k]_\tau}}{k<\nu}\geq
    2C\sqrt{1-\tau}>C\sqrt{1-\tau}.
  \end{displaymath}
  We have that $\tau>s$ holds almost surely, since $s$ is not a zero
  of any $\bn$ almost surely, so $\tau$ satisfies (\ref{it:a})-(\ref{it:c}) of
  Lemma~\ref{prop:1}. 
\end{proof}

\subsection{Easy steps of the proof of Theorem~\ref{prop:7} and \ref{prop:8}}
\label{sec:3.5.0}

The main step of the proof of these theorems, that will be given in
subsection~\ref{sec:3.5} and \ref{sec:3.6}, is that if $Y>0$ almost surely
(or $X<1$ almost 
surely), then for any $C>0$, $s\in(0,1)$ the set of the bad time points 
$[0,\infty)\setminus A(C,s)$ is almost surely porous at 1.
Then Corollary~\ref{cor:8} applies and  the \levy/ transformation
$\lT$ is strongly mixing. 

If $Y>0$ does not hold almost surely, then either $Y=0$ or $Y$ is a
non-constant variable invariant for $\lT$, hence in latter case the \levy/
transformation $\lT$ is not ergodic. These are the first two cases in
Theorem~\ref{prop:7}. Similar analysis applies to $X$ and
Theorem~\ref{prop:8}.   

To show the invariance of $Y$ recall that $\gamma_n^*\to 1$ by  the 
density theorem of the zeros due to  \citet{MR1975087} and $\gamma_0<1$, both
property holding almost surely. 
Hence, for every large enough $n$, $\gamma_{n+1}^* >\gamma_0$, therefore 
$\gamma_{n+1}^*=\gamma_n^*\circ\lT$, 
\begin{displaymath}
  Z_n(\gamma^*_n)\circ \lT = \min_{0\leq k<n} \abs*{\bn[k+1]_{\gamma_n^*\circ
      \lT}}=
  \min_{1\leq k<n+1} \abs*{\bn[k]_{\gamma^*_{n+1}}}\geq
  Z_{n+1}(\gamma^*_{n+1}),
\end{displaymath}
and
\begin{displaymath}
  \frac{Z_n(\gamma^*_n)}{\sqrt{1-\gamma^*_n}}\circ \lT \geq
  \frac{Z_{n+1}(\gamma^*_{n+1})}{\sqrt{1-\gamma^*_{n+1}}}. 
\end{displaymath}
Taking limit superior we obtain that $Y\circ \lT \geq Y$. 
Using that $\lT$ is measure--preserving we conclude 
$Y\circ \lT=Y$ almost surely, that is, $Y$ is  $\lT$ invariant. 

To show the invariance of $X$ directly, without referring to Theorem
\ref{thm:XY},  we use Corollary \ref{cor:11}, which says that 
almost surely $\inf_{n\geq 0}\abs*{\bn_t}=0$ for all $t\geq
0$.  Thus  $Z_n\to 0$ and since $\abs*{\bn[0]_1}>0$ almost surely, for every
large enough $n$,  $Z_n<\abs*{\bn[0]_1}$, therefore
$(Z_{n+1}/Z_n)\circ\lT=(Z_{n+2}/Z_{n+1})$. Hence  $X\circ \lT=X$.

\subsection{Proof of Theorem~\ref{prop:7}}
\label{sec:3.4}

Fix $C>0$ and  $s\in(0,1)$ and consider the random set
\begin{multline}\label{tilde ACs} 
  \tilde A(C,s)=
  \set{t>0}{\text{exist $n\geq1$ such that  $s t<\gamma_n(t)=\gamma_n^*(t)$ and
    }\\ 
    \text{$\min_{0\leq k<n}\abs*{\bn[k]_{\gamma_n(t)}}>C\sqrt{t-\gamma_n(t)}$}
  }
  \subset A(C,s).
\end{multline}
The difference between $A(C,s)$ and $\tilde A(C,s)$ is that in the latter
case we only consider last zeros satisfying $\gamma_n(t)>\gamma_k(t)$ for
$k=0,\dots,n-1$, whereas in the case of $A(C,s)$ we consider any zero of
the iterated paths. Note also, that here $n>0$, so the zeros of $\beta$
itself are not used, while $n$ can be zero in the definition of $A(C,s)$.

We prove below the next proposition.
\begin{proposition}\label{prop:7new}
  Almost surely on the event $\event{Y>0}$, the closed set
  $[0,\infty)\setminus\tilde{A}(C,s)$ is porous at $1$ for any $C>0$ and
  $s\in(0,1)$. 
\end{proposition}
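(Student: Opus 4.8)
The plan is to work at a fixed $\omega$ in the almost sure event on which: (a) the density theorem of \citet{MR1975087} holds, so the zeros of the iterated paths are dense; (b) the almost sure property of Lemma~\ref{l1} holds; and (c) $Y(\omega)>0$. Under these conditions I want to produce, for every $\eps>0$, an interval to the left of $1$ of length at least $\mathrm{const}\cdot\eps$ which is entirely contained in $\tilde A(C,s)$, with the constant independent of $\eps$; this is exactly what porosity of the complement at $1$ demands, and it suffices to look only to the left by \eqref{eq:Y def}. The natural source of such intervals is the definition of $Y$: since $Y(\omega)=\limsup_n Z_n(\gamma^*_n)/\sqrt{1-\gamma^*_n}>0$, there is a constant $c>0$ and infinitely many $n$ with $Z_n(\gamma^*_n)>c\sqrt{1-\gamma^*_n}$, and along such $n$ we also have $\gamma^*_n\to1$ by the density theorem, so $1-\gamma^*_n\to0$.

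The key step is then to take such an $n$ with $1-\gamma^*_n$ small and to show that a whole interval $(\gamma^*_n, \gamma^*_n + \delta)$, or perhaps an interval slightly further right, lands in $\tilde A(C,s)$. For $t$ in such an interval I would set $\gamma_n(t)=\gamma^*_n$: indeed $\gamma^*_n=\gamma_n(1)$ (the last zero of $\bn$ before $1$ among those that are also later than all the zeros of $\bn[0],\dots,\bn[n-1]$), and provided $t$ stays to the left of the next zero of $\bn$ after $\gamma^*_n$, we get $\gamma_n(t)=\gamma_n^*(t)=\gamma^*_n$. To control that "next zero," I would invoke Lemma~\ref{l1} applied on an interval $I$ containing $\gamma^*_n$: as long as $\sup_{u\in I}|\beta_u-\beta_{\gamma^*_n}|< Z_n(\gamma^*_n)$, none of $\bn[0],\dots,\bn[n-1]$ vanishes on $I$, which pins down $\gamma_n(t)=\gamma^*_n$ for $t\in I$. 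The requirement $st<\gamma^*_n$ holds for $t$ close to $1$ since $\gamma^*_n\to1>s$. The remaining requirement $Z_n(\gamma^*_n)>C\sqrt{t-\gamma^*_n}$ is where the quantitative gain from $Y>0$ enters: since $Z_n(\gamma^*_n)>c\sqrt{1-\gamma^*_n}$, it is enough that $t-\gamma^*_n < (c/C)^2 (1-\gamma^*_n)$, i.e., $t$ lies in an interval of length proportional to $1-\gamma^*_n$ to the right of $\gamma^*_n$.

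The main obstacle — and the point requiring care — is coordinating two competing smallness constraints on the interval I extract. On one hand I need $t-\gamma^*_n$ of order $1-\gamma^*_n$ to keep $Z_n(\gamma^*_n)>C\sqrt{t-\gamma^*_n}$; on the other I need the interval short enough that the Brownian oscillation of $\beta$ over it stays below $Z_n(\gamma^*_n)$ so that Lemma~\ref{l1} keeps $\gamma_n(t)$ frozen at $\gamma^*_n$ and prevents an earlier zero of some $\bn[k]$, $k<n$, from intervening. Both constraints are compatible because $Z_n(\gamma^*_n)\asymp \sqrt{1-\gamma^*_n}$ while the oscillation over an interval of length $O(1-\gamma^*_n)$ is typically of order $\sqrt{1-\gamma^*_n}$ as well — so one must choose the proportionality constant in $t-\gamma^*_n \le \eta(1-\gamma^*_n)$ small enough, depending on $c$ and $C$ but not on $n$, and then argue that for the infinitely many good $n$ the oscillation bound indeed holds; if necessary one passes to a further subsequence of good $n$ along which $\sup_{|u-\gamma^*_n|\le \eta(1-\gamma^*_n)}|\beta_u-\beta_{\gamma^*_n}|<Z_n(\gamma^*_n)$, using Brownian scaling and Borel--Cantelli to see such $n$ are still infinitely many. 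Finally, taking $\eps = 1-\gamma^*_n$ along this subsequence, the interval $\big(\gamma^*_n,\ \gamma^*_n+\eta\eps\big)\subset \tilde A(C,s)$ sits inside $(1-\eps,1+\eps)$ and witnesses $\limsup_{\eps\to0+} f(1,\eps)/\eps \ge \eta >0$, i.e., porosity of $[0,\infty)\setminus\tilde A(C,s)$ at $1$, which is the assertion.
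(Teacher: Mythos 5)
Your core construction coincides with the paper's: along the infinitely many $n$ for which $Z_n(\gamma_n^*)>c\sqrt{1-\gamma_n^*}$ (with $0<c<Y$, e.g.\ $c=Y/2$), you place the interval $I_n=(\gamma_n^*,\gamma_n^*+\eta(1-\gamma_n^*))$ with $\eta$ of order $((c\wedge C)/C)^2$, note that $\gamma_n^*\to1>s$ handles the requirement $st<\gamma_n^*$, and observe that $t-\gamma_n^*<\eta(1-\gamma_n^*)$ makes $\min_{0\le k<n}\abs*{\bn[k]_{\gamma_n^*}}>C\sqrt{t-\gamma_n^*}$ automatic. All of that is exactly the paper's argument.

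The gap is the step you single out as the main obstacle. To freeze $\gamma_n(t)=\gamma_n^*(t)=\gamma_n^*$ on $I_n$ you invoke Lemma~\ref{l1} and therefore need an oscillation bound $\sup_{u\in I_n}\abs*{\beta_u-\beta_{\gamma_n^*}}<Z_n(\gamma_n^*)$, which you propose to secure along a further subsequence of the good $n$ ``by Brownian scaling and Borel--Cantelli''. As described this would not go through: $\gamma_n^*$ is a last-exit time rather than a stopping time, and the set of good indices is random and determined by the very portion of the path whose oscillation you are trying to control, so there is no independence to feed into a Borel--Cantelli argument and no summable estimate is offered; nothing guarantees that infinitely many good $n$ survive the extra requirement. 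Fortunately the entire step is superfluous. Since $\gamma_n^*=\gamma_n^*(1)=\max_{0\le k\le n}\gamma_k(1)$, none of $\bn[0],\dots,\bn[n]$ has a zero in $(\gamma_n^*,1]$, so $\gamma_k(t)=\gamma_k(1)$ for all $k\le n$ and all $t\in(\gamma_n^*,1]$, whence $\gamma_n^*(t)=\gamma_n^*$; and $Z_n(\gamma_n^*)>0$ forces the index attaining this maximum to be $k=n$, i.e.\ $\bn[n]_{\gamma_n^*}=0$ and hence $\gamma_n(t)=\gamma_n^*=\gamma_n^*(t)$. Since $\eta\le1$ keeps $I_n\subset(\gamma_n^*,1)$, both requirements in \eqref{tilde ACs} hold with no oscillation control and no appeal to Lemma~\ref{l1}; with this substitution your proof closes and becomes the paper's proof.
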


This result readily implies that if $Y>0$ almost surely, then 
$[0,\infty)\setminus\tilde{A}(C,s)$ and the smaller
random closed set 
$[0,\infty)\setminus A(C,s)$ are both almost surely porous at 1 for any $C>0$
and $s\in(0,1)$. Then the strong mixing property of $\lT$ follows by
Corollary~\ref{cor:8}.  

It remains to show that $Y=\infty$ almost surely on the event $\event{Y>0}$,
which proves that $Y\in\smallset{0,\infty}$ almost surely. 
This is the content of the next Proposition.

\begin{proposition}\label{prop:Y>0}Set
  \begin{displaymath}
    \tilde{A}(s)=\bigcap _{C>0} \tilde{A}(C,s),
    \quad\text{for $s\in(0,1)$ and}\quad
    \tilde{A}=\bigcap _{s\in(0,1)} \tilde{A}(s).
  \end{displaymath}
  Then the events $\event{Y>0}$, $\event{Y=\infty}$,
  $\event*{1\in\tilde{A}(s)}$, $s\in(0,1)$ and $\event*{1\in \tilde{A}}$ are
  equal up to null sets. 
\end{proposition}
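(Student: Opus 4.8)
The plan is to establish the chain of set inclusions (up to null events)
\[
  \event{Y=\infty}\subset\event{1\in\tilde A}\subset\event*{1\in\tilde A(s)}\subset\event*{1\in\tilde A(C,s)}\subset\event{Y>0}\subset\event{Y=\infty},
\]
which then forces all of these to be equal modulo null sets. Several of these inclusions are trivial from the definitions: $\tilde A\subset\tilde A(s)\subset\tilde A(C,s)$ by the nested intersections, so the middle inclusions hold pointwise. The inclusion $\event{Y>0}\subset\event{Y=\infty}$ up to null sets is the statement that $Y\in\{0,\infty\}$ a.s., and since $Y$ is $\lT$-invariant (shown in subsection~\ref{sec:3.5.0}) it suffices to argue that $\event{Y>0}$ is, up to null sets, a tail-type event or else to show directly that $Y>0$ implies $Y=\infty$; I would try the direct route, using the self-similar/recurrent structure of the iterated paths to show that once the ratio $Z_n(\gamma_n^*)/\sqrt{1-\gamma_n^*}$ stays bounded away from zero along a subsequence, a scaling argument (zooming in at the relevant last zeros, where the future increment of $\beta$ is an independent rescaled Brownian motion) lets one make the ratio as large as desired, so the $\limsup$ is actually $+\infty$.

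The two substantive inclusions are $\event{Y=\infty}\subset\event{1\in\tilde A}$ and $\event*{1\in\tilde A(C,s)}\subset\event{Y>0}$. For the latter: if $1\in\tilde A(C,s)$ then by definition there is some $n\geq1$ with $s<\gamma_n=\gamma_n^*$ and $Z_n(\gamma_n^*)=\min_{0\leq k<n}\abs*{\bn[k]_{\gamma_n}}>C\sqrt{1-\gamma_n}$, i.e. a single index $n$ already witnesses $Z_n(\gamma_n^*)/\sqrt{1-\gamma_n^*}>C$; but to conclude $Y=\limsup_m Z_m(\gamma_m^*)/\sqrt{1-\gamma_m^*}>0$ I need this to persist for infinitely many $m$, so I would use the density theorem of \citet{MR1975087} to locate a zero of some $\bn[m]$ arbitrarily close to $\gamma_n$ from the left with $m>n$, together with Lemma~\ref{l1} to propagate the lower bound on $\abs*{\bn[k]_{\gamma_n}}$ backward to these nearby points, keeping the ratio bounded below; taking $C\to\infty$ in $\tilde A(s)$, and then $s\uparrow1$, upgrades $Y>0$ to the full statement. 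For the inclusion $\event{Y=\infty}\subset\event{1\in\tilde A}$: fix $C>0$ and $s\in(0,1)$; since $Y=\infty$, there are infinitely many $n$ with $Z_n(\gamma_n^*)>2C\sqrt{1-\gamma_n^*}$, and since $\gamma_n^*\to1$ a.s. we may take such an $n$ with $\gamma_n^*>s$ and $\gamma_n^*=\gamma_n>\gamma_k$ for all $k<n$ (by passing to the subsequence where the max defining $\gamma_n^*$ is attained at index $n$, or by an elementary reindexing). This $n$ then witnesses $1\in\tilde A(C,s)$ directly; intersecting over $C>0$ and $s\in(0,1)$ (a countable intersection suffices by monotonicity) gives $1\in\tilde A$.

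The main obstacle I anticipate is the step $Y>0\Rightarrow Y=\infty$, i.e. proving the $\{0,\infty\}$-dichotomy rather than merely $\lT$-invariance. The clean way is to exploit Proposition~\ref{prop:7new}, which I am allowed to assume: it says that on $\event{Y>0}$ the set $[0,\infty)\setminus\tilde A(C,s)$ is a.s. porous at $1$, hence by Lemma~\ref{l7} (applied to the scaling-invariant closed set $[0,\infty)\setminus\tilde A(C,s)$, whose scaling invariance follows as for $A(C,s)$ in \eqref{eq:TxA}) we get $\P\bigl(\event{1\in\tilde A(C,s)}\triangle\event{Y>0}\bigr)=0$ for every $C>0$; letting $C\to\infty$ through a sequence shows $\event{1\in\tilde A(s)}$ agrees with $\event{Y>0}$ up to null sets, and then the inclusion $\event{1\in\tilde A(C,s)}\subset\event{Y\ge C}$ (which is essentially the definition together with the persistence argument above, now read quantitatively) forces $Y\ge C$ a.s. on $\event{Y>0}$ for every $C$, i.e. $Y=\infty$ a.s. on $\event{Y>0}$. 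Thus the whole proposition reduces to carefully assembling Proposition~\ref{prop:7new}, Lemma~\ref{l7}, and the density theorem, and I would organize the write-up exactly in that order.
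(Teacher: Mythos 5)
Your easy inclusions are fine, and your use of Proposition~\ref{prop:7new} together with Lemma~\ref{l7} correctly yields $\event{Y>0}\subset\event{1\in\tilde A(C,s)}$ up to a null set (and hence, intersecting over the countable family $\tilde A(k,1-1/k)$, $\event{Y>0}\subset\event{1\in\tilde A}$), which is exactly how the paper obtains that half of the chain. But there is a genuine gap in the other half. First, Proposition~\ref{prop:7new} plus Lemma~\ref{l7} give only the \emph{inclusion} $\event{Y>0}\subset\event{1\in\tilde A(C,s)}$, not the equality of the two events up to a null set that you assert; the reverse inclusion for a \emph{fixed} $C$ is precisely what would still need proof, the paper never establishes it, and the proposition does not even claim that $\event{1\in\tilde A(C,s)}$ for fixed $C$ belongs to the list of a.s.\ equal events. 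Second, your substitute for it --- the ``persistence'' argument that a single index $n$ witnessing $Y_n>C$ propagates to infinitely many indices by locating zeros of higher iterates near $\gamma_n$ and invoking Lemma~\ref{l1} --- does not work as sketched: for $m>n$ the relevant time $\gamma_m^*$ moves to the \emph{right} of $\gamma_n$ (towards $1$), the minimum defining $Y_m$ runs over ever more paths, and neither Lemma~\ref{l1} nor the density theorem gives a lower bound on $\min_{0\leq k<m}\abs*{\bn[k]_{\gamma_m^*}}$ at these new times. The same objection applies to your first-paragraph sketch of $Y>0\Rightarrow Y=\infty$ by ``zooming in at the last zeros'': $\gamma_n^*$ is not a stopping time, so the post-$\gamma_n^*$ increment is not an independent Brownian motion.

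The device you are missing is to intersect over $C$ \emph{before} saying anything about the $\limsup$. Writing $Y_n=\min_{0\leq k<n}\abs*{\bn[k]_{\gamma_n^*}}/\sqrt{1-\gamma_n^*}$, the definition of $\tilde A(C,s)$ gives $\sup_{n\geq1}Y_n\geq C$ on $\event{1\in\tilde A(C,s)}$, hence $\sup_{n\geq1}Y_n=\infty$ on $\event{1\in\tilde A(s)}=\bigcap_{C>0}\event{1\in\tilde A(C,s)}$. Since each $Y_n$ with $n\geq1$ is almost surely finite, an infinite supremum can only come from the tail of the sequence, so $Y=\limsup_nY_n=\infty$ almost surely on $\event{1\in\tilde A(s)}$; this is also where the restriction $n\geq1$ in the definition of $\tilde A(C,s)$ is used. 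No persistence of any fixed witnessing index is needed, and the chain closes as $\event{Y>0}\subset\event{1\in\tilde A}\subset\event{1\in\tilde A(s)}\subset\event{Y=\infty}\subset\event{Y>0}$, all up to null sets.
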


\begin{proof}[Proof of Proposition~\ref{prop:Y>0}]
  Recall that $Y=\limsup_{n\to\infty} Y_n$ with
  \begin{displaymath}
    Y_n=\frac{\min_{0\leq k < n}\abs*{\bn[k]_{\gamma_n^*}}}{\sqrt{1-\gamma_n^*}}.
  \end{displaymath}
  With this notation, on $\event*{1\in \tilde{A}(C,s)}$ there is a random $n\geq 1$
  such that $Y_n>C$. Here, the restriction $n\geq1$  
  in the definition of
  $\tilde{A}(C,s)$ is useful. 
  This way, we get that $\sup_{n\geq 1} Y_n\geq C$ on $\event*{1\in
    \tilde{A}(C,s)}$ and $\sup_{n\geq1}Y_n=\infty$ on
  $\event*{1\in \tilde{A}(s)}$. 
  Since $Y_n<\infty$  almost surely for all $n\geq 1$, we
  also have that $Y=\infty$ almost surely on   $\event*{1\in \tilde{A}(s)}$.  

  Next, the law of the random closed set $[0,\infty)\setminus\tilde{A}(C,s)$
  is invariant by scaling, hence by Proposition \ref{prop:7new} and Lemma
  \ref{l7}, 
  \begin{displaymath}
    \event{Y>0}\subset\event{\text{$[0,\infty)\setminus\tilde{A}(C,s)$ is
        porous at $1$}}\subset \event{1\in\tilde{A}(C,s)},
    \quad\text{almost surely}.
  \end{displaymath}
  The inclusions $\tilde{A}(C,s)\subset\tilde{A}(C',s)$ for $C>C'$ and
  $\tilde{A}(C,s)\subset\tilde{A}(C,s')$ for $1>s'>s>0$ yield
  \begin{displaymath}
    \tilde{A}=\bigcap_{k=1}^{\infty} \tilde{A}(k,1-1/k).
  \end{displaymath}
  Thus, $\event{Y>0}\subset\event*{1\in\tilde{A}}$ almost surely.

  Hence, up to null events,
  \begin{displaymath}
    \event{Y>0}\subset\event*{1\in\tilde{A}}\subset
    \event*{1\in \tilde{A}(s)}\subset
    \event{Y=\infty}\subset\event{Y>0}
  \end{displaymath}
  for any $s\in(0,1)$, which completes the proof.
\end{proof}

\begin{proof}[Proof of Proposition~\ref{prop:7new}]
  By Malric's density theorem of zeros, recalled in \eqref{eq:density},
  $\gamma_n^*\to 1^{-}$ almost surely.  
  Hence 
  it is enough to show that on the event 
  $\event{Y>0}\cap\event{\gamma_n^*\to 1^{-}}$ the set 
  $\tilde{H}=[0,\infty)\setminus\tilde{A}(C,s)$ is porous at 1. 

  Let $\xi=Y/2$ and
  \begin{displaymath}
    I_n=(\gamma^*_n,\gamma^*_n+r_n)
    ,\quad\text{where}\quad r_n=\zfrac{\xi\wedge C}{C}^2(1-\gamma^*_n). 
  \end{displaymath}
  We claim that if 
  \begin{equation}
    \label{eq:gamma abs bn}
    \xi>0,\quad\gamma_n=\gamma_n^*>s,\quad\text{and}\quad
    \abs*{\bn[k]_{\gamma_n}}>\xi\sqrt{1-\gamma_n},\quad\text{for $0\leq k<n$}.   
  \end{equation}
  then $I_n\subset \tilde{A}(C,s)\cap( \gamma_n^*,1)$ with
  $r_n/(1-\gamma^*_n)>0$ not  depending on $n$. Since on 
  $\event{Y>0}\cap\event{\gamma_n^*\to 1^{-}}$ the condition \eqref{eq:gamma
    abs bn} holds for infinitely many $n$, we obtain the porosity at 1. 

  So assume that \eqref{eq:gamma abs bn} holds 
  for $n$ at a given $\omega$. As $I_n\subset(\gamma_n^*,1)$, for $t\in
  I_n$ we have that $s<t<1$ and
  $st<s<\gamma_n(t)=\gamma_n^*(t)=\gamma_n=\gamma_n^*$, 
  that is, the first 
  requirement in \eqref{tilde ACs}: $st<\gamma_n(t)=\gamma_n^*(t)$ holds for
  any $t\in I_n$. 
  For the other  requirement, note that
  $t-\gamma_n(t)< r_n\leq (1-\gamma_n^*)\xi^2/C^2$ yields 
  \begin{displaymath}
    \min_{0\leq k<n}\abs*{\bn[k]_{\gamma_n}}>\xi\sqrt{1-\gamma^*_n}>
    C\sqrt{t-\gamma_n(t)},\quad\text{for $t\in I_n$}.\qedhere
  \end{displaymath}
\end{proof}

\subsection{Proof of Theorem~\ref{prop:8}}
\label{sec:3.5}

  Compared to Theorem~\ref{prop:7} in the proof of Theorem~\ref{prop:8}   we
  consider an even larger 
  set $[0,\infty)\setminus \dtilde{A}(C,s)$, where
  \begin{multline*}
    \dtilde{A}(C,s)=\set{t>0}{
      \exists n\geq1,\, s t<\gamma_n(t)=\gamma_n^*(t),\, \\
      \min_{0\leq k<n} \abs*{\bn[k]_{\gamma_n(t)}}>C\sqrt{t-\gamma_n(t)},\\
      \max_{u\in[\gamma_n(t),t]}\abs*{\beta_u-\beta_{\gamma_n(t)}}
      < 
      \sqrt{t-\gamma_n(t)}
    }\subset
    \tilde{A}(C,s)\subset A(C,s). 
  \end{multline*}
  Here we also require that  the fluctuation of
  $\beta$ between $\gamma_n(t)$ and $t$ is not too big. 

  We will prove the next proposition below.
  \begin{proposition}\label{prop:100}
    Let $C>1$, and $s\in(0,1)$. Then  almost surely on the event
    $\event{X<1}$, the closed set $[0,\infty)\setminus\dtilde{A} (C,s)$ is
    porous at 1.   
  \end{proposition}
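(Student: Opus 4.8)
The plan is to follow the blueprint of the proof of Proposition~\ref{prop:7new}: working on the event $\event{X<1}$, intersected with the almost sure event on which Malric's density theorem \eqref{eq:density} holds (so $\gamma_n^*\uparrow 1$), on which the conclusions of Lemma~\ref{l1} and Corollary~\ref{cor:11} hold (so $Z_n\downarrow 0$) and on which $\gamma_0<1$, I will produce, for infinitely many $n$, an interval lying to the left of $1$, contained in $\dtilde{A}(C,s)$, and of length at least a fixed positive multiple of $1-\gamma_n^*$; porosity of $[0,\infty)\setminus\dtilde{A}(C,s)$ at $1$ then follows at once by taking $\eps=1-\gamma_n^*$.

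First I would extract the subsequence. From $X=\liminf_n Z_{n+1}/Z_n<1$ there is an $\omega$-dependent $q<1$ with $Z_{n+1}<qZ_n$ for infinitely many $n$; since $Z_{n+1}=\min(Z_n,\abs*{\bn[n]_1})$, for such $n$ one has $Z_{n+1}=\abs*{\bn[n]_1}$ and $Z_n=\min_{0\le k<n}\abs*{\bn[k]_1}$. Along this subsequence one also wants $\gamma_n=\gamma_n^*$, so that $\gamma_n^*$ is a zero of $\bn[n]$; this holds at every index where $\gamma_n^*$ strictly increases, hence infinitely often, and checking that it can be arranged simultaneously with $Z_{n+1}<qZ_n$ is part of the bookkeeping. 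Fixing such an $n$: by definition of $\gamma_n^*$ none of $\bn[0],\dots,\bn[n]$ vanishes on $(\gamma_n^*,1]$, so the computation in the proof of Lemma~\ref{l1} applies with $a=\gamma_n^*$ and $I=(\gamma_n^*,1]$ \emph{without its fluctuation hypothesis} (the absence of zeros being known outright), giving $\bn[k]_t-\bn[k]_{\gamma_n^*}=\hn[k](\gamma_n^*)\zjel{\beta_t-\beta_{\gamma_n^*}}$ for $0\le k\le n$ and $t\in[\gamma_n^*,1]$. Taking $k=n$, $t=1$ yields $\abs*{\beta_1-\beta_{\gamma_n^*}}=\abs*{\bn[n]_1}=Z_{n+1}$, and for $k<n$ one gets $\bigl|\,\abs*{\bn[k]_{\gamma_n^*}}-\abs*{\bn[k]_1}\,\bigr|\le Z_{n+1}$, hence $(1-q)Z_n\le\min_{0\le k<n}\abs*{\bn[k]_{\gamma_n^*}}\le(1+q)Z_n$; moreover $\max_{u\in[\gamma_n^*,t]}\abs*{\beta_u-\beta_{\gamma_n^*}}=\max_{u\in[\gamma_n^*,t]}\abs*{\bn[n]_u}$ for $t\le 1$.

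Now set $\rho_n=\zfrac{\min_{0\le k<n}\abs*{\bn[k]_{\gamma_n^*}}}{C}^2$ and look at $t\in(\gamma_n^*,\gamma_n^*+\rho_n)\cap(\gamma_n^*,1)$. For such $t$ the requirements $st<\gamma_n(t)=\gamma_n^*(t)=\gamma_n^*$ and $\min_{0\le k<n}\abs*{\bn[k]_{\gamma_n^*}}>C\sqrt{t-\gamma_n^*}$ of the definition of $\dtilde{A}(C,s)$ are met (the first because $\gamma_n^*>s$ eventually and $t<1$), so $t\in\dtilde{A}(C,s)$ as soon as the fluctuation bound $\max_{u\in[\gamma_n^*,t]}\abs*{\bn[n]_u}<\sqrt{t-\gamma_n^*}$ holds. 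This is exactly where $C>1$ is used: that bound together with $\min_{k<n}\abs*{\bn[k]_{\gamma_n^*}}>C\sqrt{t-\gamma_n^*}$ re-confirms the hypothesis of Lemma~\ref{l1} on $[\gamma_n^*,t]$, so the identification of $\max\abs*{\bn[n]}$ with $\max\abs*{\beta-\beta_{\gamma_n^*}}$ stays valid throughout the relevant range. Since $\gamma_n^*$ is the last zero of $\bn[n]$ before $1$, the process $u\mapsto\bn[n]_{\gamma_n^*+u}$ is the initial portion, of length $>1-\gamma_n^*$, of a Brownian excursion, which near its start behaves like a three-dimensional Bessel process; a scaling argument then gives that, with probability bounded away from $0$, it stays below $\tfrac{1}{\sqrt{2}}\sqrt{u}$ on $[0,u_n^{**}]$ with $u_n^{**}=\rho_n\wedge(1-\gamma_n^*)$. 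On that event $\set*{t\in(\gamma_n^*,\gamma_n^*+u_n^{**})}{\max_{[\gamma_n^*,t]}\abs*{\bn[n]}<\sqrt{t-\gamma_n^*}}$ contains the interval $(\gamma_n^*+u_n^{**}/2,\gamma_n^*+u_n^{**})$, which thus lies in $\dtilde{A}(C,s)\cap(\gamma_n^*,1)$ and has length $u_n^{**}/2$.

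The two remaining points are the hard part. First, the Bessel/excursion event above has only positive, not full, conditional probability, so one must make it occur along infinitely many of the good indices $n$; the dependence between the excursions of the different $\bn[n]$ near their last zeros before $1$ is the main technical obstacle, to be handled by a Borel--Cantelli / ergodic-averaging type argument. Second, porosity demands $u_n^{**}\ge c(1-\gamma_n^*)$ for a fixed $c>0$, i.e.\ $\rho_n\gtrsim 1-\gamma_n^*$, which is $\min_{k<n}\abs*{\bn[k]_{\gamma_n^*}}\gtrsim C\sqrt{1-\gamma_n^*}$, i.e.\ $Y_n\gtrsim C$, for infinitely many $n$; this forces one to show, from $X<1$ alone and without using Theorem~\ref{thm:XY} (whose proof relies on the present proposition), that $\limsup_n Y_n=\infty$ on $\event{X<1}$. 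I would derive this by ruling out $Y_n\to 0$ via the identity $Z_{n+1}=\abs*{\beta_1-\beta_{\gamma_n^*}}$ together with excursion estimates for $\bn[n]$ near $\gamma_n^*$, and then invoking the $\set{0,\infty}$-dichotomy of Proposition~\ref{prop:Y>0}. Granting both, the interval $(\gamma_n^*+u_n^{**}/2,\gamma_n^*+u_n^{**})\subset\dtilde{A}(C,s)\cap(1-\eps_n,1)$ with $\eps_n=1-\gamma_n^*\to 0$ gives $f(1,\eps_n)/\eps_n\ge \tfrac{c}{2}>0$ along a subsequence, which is the claimed porosity.
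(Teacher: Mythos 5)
Your skeleton (produce, for infinitely many $n$, an interval inside $\dtilde{A}(C,s)$ near $1$ whose length is a fixed fraction of the window) is the right one, but the two points you defer to the end are not bookkeeping --- they are the whole difficulty, and neither is filled. First, your construction needs the excursion of $\bn[n]$ to the right of $\gamma_n^*$ to stay below $\sqrt{u}$ on a window of length comparable to $1-\gamma_n^*$, for infinitely many of the good indices $n$. That event concerns the piece of $\beta$ on $[\gamma_n^*,1]$, which is $\B_1$--measurable, i.e.\ determined by exactly the same information that selects the good indices; there is no fresh randomness to feed a Borel--Cantelli argument, and ``ergodic averaging'' cannot be invoked since ergodicity is what is ultimately at stake. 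Second, to get an interval of length proportional to $1-\gamma_n^*$ you need $\rho_n\gtrsim 1-\gamma_n^*$, i.e.\ $\limsup_n Y_n=\infty$ on $\event{X<1}$; but the only comparison of $X$ and $Y$ available in the paper is Theorem~\ref{thm:XY}, whose proof uses the present proposition, and your sketched direct derivation (``ruling out $Y_n\to0$ via excursion estimates'') is not an argument. As it stands the proposal has a genuine gap on both counts.

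The paper resolves both problems at once by building the good intervals to the \emph{right} of time $1$, which your left-of-$1$ setup cannot imitate. From $Z_{n_k}>(1+\xi)\abs*{\bn[n_k]_1}$ one knows that $\bn[n_k]$ is very close to $0$ at time $1$ while all earlier iterates are at distance at least $(1+\xi)\abs{x_k}$; Lemma~\ref{l1} then transfers the \emph{future} increment $B_t=\beta_{1+t}-\beta_1$ to all of $\bn[0],\dots,\bn[n_k]$ on $[1,1+2x_k^2]$. The event that $\Theta_{x_k}B$ lies in a fixed set $D(\xi,C)$ of positive Wiener measure (forcing $\bn[n_k]$ to have a last zero $\tgamma_k\in(1,1+x_k^2]$ and to stay small on $[\tgamma_k,1+2x_k^2]$) is then upgraded to ``infinitely often, almost surely'' by Corollary~\ref{cor:14}, precisely because $B$ is independent of $\B_1$ while $x_k$ and $\xi$ are $\B_1$--measurable. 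Moreover the needed lower bound $\min_{\ell<n_k}\abs*{\bn[\ell]_{\tgamma_k}}>\xi\abs{x_k}\geq C\sqrt{t-\tgamma_k}$ comes for free from the gap $(1+\xi)\abs{x_k}$ at time $1$, so no analogue of $\limsup_n Y_n=\infty$ is required: the produced interval has length $\tfrac12((\xi\wedge C)/C)^2x_k^2$, automatically a fixed fraction of the window $2x_k^2$. To salvage your approach you would have to prove both deferred claims from scratch; the right-of-$1$ construction is what makes the proposition provable from $X<1$ alone.
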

  
  This result implies that if $X<1$ almost surely, then for any $C>0$,
  $s\in(0,1)$ the random closed set
  $[0,\infty)\setminus\dtilde{A} (C,s)$ is porous at 1 
  almost surely, and so is the smaller set $[0,\infty)\setminus A(C,s)$. Then
  the strong mixing of $\lT$ follows from Corollary~\ref{cor:8}.

  To complete the proof of Theorem~\ref{prop:8}, it remains to show that $X=0$
  almost surely on the event $\event{X<1}$. This is the content of
  next proposition. In order to prove Theorem~\ref{thm:XY} we introduce a new
  parameter $L>0$. 
  \begin{multline*}
    \dtilde{A}_L(C,s)=\set{t>0}{
      \exists n\geq1,\, s t<\gamma_n(t)=\gamma_n^*(t),\,\\ 
      \min_{0\leq k<n} \abs*{\bn[k]_{\gamma_n(t)}}>C\sqrt{t-\gamma_n(t)},\,
      \max_{u\in[\gamma_n(t),t]}\abs*{\beta_u-\beta_{\gamma_n(t)}}
      <
      L\sqrt{t-\gamma_n(t)}
    }
  \end{multline*}
  Then $\dtilde{A}(C,s)=\dtilde{A}_1(C,s)$.
  \begin{proposition}\label{prop:X<1} 
    Fix $L\geq 1$ and set
    \begin{displaymath}
      \dtilde{A}_L(s)=\bigcap_{C>0} \dtilde{A}_L(C,s),
      \quad\text{for $s\in(0,1)$ and}\quad
      \dtilde{A}_L=\bigcap_{s\in(0,1)} \dtilde{A}_L(s).
    \end{displaymath}
    Then the events $\event{X=0}$, $\event{X<1}$, $\event*{1\in\dtilde{A}_L}$
    and $\event*{1\in\dtilde{A}_L(s)}$,  
    $s\in(0,1)$ are equal up to null sets. 
  \end{proposition}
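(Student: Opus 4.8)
The plan is to mimic the proof of Proposition~\ref{prop:Y>0}. Fix $s\in(0,1)$; I will establish the circular chain
\begin{displaymath}
  \event*{X<1}\subset\event*{1\in\dtilde{A}_L}\subset\event*{1\in\dtilde{A}_L(s)}\subset\event*{X=0}\subset\event*{X<1},
\end{displaymath}
each inclusion holding up to a null set (two of them trivially and exactly), which forces all four events to coincide up to null sets, and, letting $s$ vary, also all the $\event*{1\in\dtilde{A}_L(s)}$. The second inclusion is the definition $\dtilde{A}_L\subset\dtilde{A}_L(s)$ and the fourth is trivial, so the content is the first and the third. Throughout I use the monotonicities $\dtilde{A}_L(C,s)\subset\dtilde{A}_L(C',s')$ for $C\geq C'$ and $s\geq s'$ (a stronger lower bound on the iterated paths and the stronger constraint $st<\gamma_n(t)$ each shrink the set), so that the uncountable intersections defining $\dtilde{A}_L(s)$ and $\dtilde{A}_L$ may be replaced by countable cofinal ones such as $\dtilde{A}_L(s)=\bigcap_{m\geq1}\dtilde{A}_L(m,s)$ and $\dtilde{A}_L=\bigcap_{k\geq1}\dtilde{A}_L(k,1-1/k)$, making the ``up to null sets'' transfers legitimate.

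For the inclusion $\event*{1\in\dtilde{A}_L(s)}\subset\event*{X=0}$ I would work deterministically on the almost sure event of Lemma~\ref{l1} intersected with the almost sure event that $Z_n\in(0,\infty)$ for every $n\geq1$ (true a.s. since each $\abs*{\bn[k]_1}$ is a nondegenerate Gaussian, whence also $\gamma_n<1$ a.s.). Fix $C>L$; note then $C>1$, so Proposition~\ref{prop:100} will also be applicable. By definition of $\dtilde{A}_L(C,s)$ at $t=1$, on $\event*{1\in\dtilde{A}_L(C,s)}$ there is a random index $n\geq1$ with $s<\gamma_n=\gamma_n^*$, with $\min_{0\leq k<n}\abs*{\bn[k]_{\gamma_n}}>C\sqrt{1-\gamma_n}$, and with $\max_{u\in[\gamma_n,1]}\abs*{\beta_u-\beta_{\gamma_n}}<L\sqrt{1-\gamma_n}$. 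Since $C\geq L$, the hypothesis \eqref{l1:cond} of Lemma~\ref{l1} holds with $I=[\gamma_n,1]$ and $a=\gamma_n$; as $\bn[n]_{\gamma_n}=0$, part~(\ref{l1:it2}) then gives $\abs*{\bn[n]_1}=\abs*{\beta_1-\beta_{\gamma_n}}<L\sqrt{1-\gamma_n}$, while for $0\leq k<n$ the triangle inequality gives $\abs*{\bn[k]_1}\geq\abs*{\bn[k]_{\gamma_n}}-\abs*{\beta_1-\beta_{\gamma_n}}>(C-L)\sqrt{1-\gamma_n}$. Hence $Z_n>(C-L)\sqrt{1-\gamma_n}$ whereas $Z_{n+1}\leq\abs*{\bn[n]_1}<L\sqrt{1-\gamma_n}$, so $Z_{n+1}/Z_n<L/(C-L)$. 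Intersecting over $C>L$ (which gives $\dtilde{A}_L(s)$), this shows $\inf_{n\geq1}Z_{n+1}/Z_n=0$ on $\event*{1\in\dtilde{A}_L(s)}$; and since the ratios $Z_{n+1}/Z_n$, $n\geq1$, are finite and strictly positive, an infimum equal to zero forces $\liminf_{n\to\infty}Z_{n+1}/Z_n=0$, i.e.\ $X=0$.

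For the inclusion $\event*{X<1}\subset\event*{1\in\dtilde{A}_L}$ I would argue as in Proposition~\ref{prop:Y>0}. First, the law of the random closed set $[0,\infty)\setminus\dtilde{A}_L(C,s)$ is invariant under scaling: since $\lT\Theta_x=\Theta_x\lT$ and the last-zero condition $\gamma_n(t)=\gamma_n^*(t)$, the lower bound on the iterated paths and the fluctuation bound on $\beta$ all transform covariantly under $\Theta_x$, one gets $t\in\dtilde{A}_L(C,s)(\omega)\iff x^{-2}t\in\dtilde{A}_L(C,s)(\Theta_x\omega)$, just as in \eqref{eq:TxA}. Next, since $\dtilde{A}_L(C,s)\supseteq\dtilde{A}(C,s)=\dtilde{A}_1(C,s)$ for $L\geq1$, Proposition~\ref{prop:100} (valid because $C>L\geq1$) shows that the larger set $[0,\infty)\setminus\dtilde{A}(C,s)$ is porous at $1$ almost surely on $\event*{X<1}$, hence so is its subset $[0,\infty)\setminus\dtilde{A}_L(C,s)$, porosity at a point being inherited by subsets. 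Lemma~\ref{l7} then yields, up to null sets, $\event*{X<1}\subset\event*{\text{$[0,\infty)\setminus\dtilde{A}_L(C,s)$ is porous at $1$}}=\event*{1\in\dtilde{A}_L(C,s)}$ for every $C>L$ and $s\in(0,1)$; intersecting over the countable family $\dtilde{A}_L=\bigcap_k\dtilde{A}_L(k,1-1/k)$ gives $\event*{X<1}\subset\event*{1\in\dtilde{A}_L}$ up to a countable union of null sets. This closes the chain.

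The only delicate point inside this argument is the bookkeeping in the use of Lemma~\ref{l1}: one must feed it the particular random $n$ witnessing membership in $\dtilde{A}_L(C,s)$, use the displacement identity of part~(\ref{l1:it2}) at $k=n$ to bound $\abs*{\bn[n]_1}$ (hence $Z_{n+1}$) from above through the fluctuation hypothesis, and at $k<n$ to bound $Z_n$ from below; everything else is the routine transfer of inclusions across Lemma~\ref{l7} and countable intersections. The genuinely hard input, the porosity of $[0,\infty)\setminus\dtilde{A}(C,s)$ at $1$ on $\event*{X<1}$, is isolated in Proposition~\ref{prop:100} and is taken for granted here.
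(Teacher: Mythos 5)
Your proof is correct and follows essentially the same route as the paper's: Lemma~\ref{l1} applied at the witnessing index yields $Z_{n+1}/Z_n<L/(C-L)$ and hence $X=0$ on $\event*{1\in\dtilde{A}_L(s)}$, while Proposition~\ref{prop:100}, scaling invariance and Lemma~\ref{l7} close the same circular chain of inclusions via the countable cofinal intersection $\bigcap_k\dtilde{A}_L(k,1-1/k)$. Your explicit observation that porosity at a point passes to subsets --- needed because Proposition~\ref{prop:100} is stated only for $\dtilde{A}_1(C,s)$ while the chain requires it for $\dtilde{A}_L(C,s)$ --- is a detail the paper's own proof elides.
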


\begin{proof}[Proof of Proposition~\ref{prop:X<1}]
    Fix $s\in(0,1)$ $L\geq1$ and let $C>L$. 
    Assume that $1\in \dtilde{A}_L(C,s)$. Let $n>0$ be an index which
    witnesses the containment. Then, as $C>L$ we can apply Lemma~\ref{l1} to
    see that the  absolute increments of $\bn[0],\dots,\bn[n]$ between
    $\gamma_n$ and $1$  are the same.
    This implies that 
    \begin{displaymath}
      \abs*{\bn[k]_1}\geq
      \abs*{\bn[k]_{\gamma_n}}-\abs*{\bn[k]_1-\bn[k]_{\gamma_n}}=
      \abs*{\bn[k]_{\gamma_n}}-\abs*{\beta_1-\beta_{\gamma_n}}, \quad\text{for
        $0\leq k\leq n$},
    \end{displaymath}
    hence
    \begin{displaymath}
      Z_n\geq \min_{0\leq
        k<n}\abs*{\bn[k]_{\gamma_n}}-\abs*{\beta_1-\beta_{\gamma_n}}>
    C\sqrt{1-\gamma_n}-L\sqrt{1-\gamma_n}
    \end{displaymath}
    whereas
    \begin{displaymath}
      Z_{n+1}\leq
      \abs*{\bn_1}=\abs*{\bn_1-\bn_{\gamma_n}}=
      \abs*{\beta_1-\beta_{\gamma_n}}
      <L\sqrt{1-\gamma_n}. 
    \end{displaymath}
    Thus
    \begin{displaymath}
      \inf_{n\geq 0} \frac{Z_{n+1}}{Z_n}\leq \frac{L}{C-L},\quad 
      \text{on $\event{1\in\dtilde{A}_L(C,s)}$ almost surely}, 
    \end{displaymath}
    and
    \begin{equation}\label{eq:frac small}
      \inf_{n\geq 0}\frac{Z_{n+1}}{Z_n}=0 
      ,\quad 
      \text{on 
        $\event{1\in\dtilde{A}_L(s)}%
        $ almost surely}.
    \end{equation}
    But, $Z_{n+1}/Z_{n}>0$ almost surely for all $n$, 
    hence $X=\liminf_{n\to\infty} Z_{n+1}/Z_n=0$ almost surely on
    $\event*{1\in\dtilde{A}_L(s)}$. This proves $\event*{1\in
      \dtilde{A}_L(s)}\subset\event{X=1}$. 

    Next, the law of the random closed set $[0,\infty)\setminus\dtilde{A}_L(C,s)$
    is clearly invariant under scaling, hence by Proposition
    \ref{prop:100} and Lemma~\ref{l7}
    \begin{equation}\label{eq:p>P{X<1}}
      \event{X<1} \subset 
      \event{\text{$[0,\infty)\setminus\dtilde{A}_L(C,s)$ is  porous at 1}}
      =\event{1\in \dtilde{A}_L(C,s)},
    \end{equation}
    each relation holding up to a null set.

    The inclusion  $\dtilde{A}_L(C',s')\subset\dtilde{A}_L(C,s)$ for
    $C'\geq C>0$ and 
    $0<s\leq s'<1$ yields
    \begin{displaymath}
      \dtilde{A}_L=\bigcap_{k=1}^\infty \dtilde{A}_L(k,1-1/k).
    \end{displaymath}
    Hence,
    $\event*{X<1}\subset\event*{1\in\dtilde{A}_L}\subset\event*{1\in\dtilde{A}_L(s)}$ 
    almost surely, which 
    together with $\event*{1\in\dtilde{A}_L(s)}\subset\event{X=0}$ completes
    the proof.  
\end{proof}

To prove Proposition \ref{prop:100} we need a Corollary of the Blumenthal $0-1$ law.

\begin{corollary}\label{cor:14}%
  Let $(x_n)$ be a sequence of non-zero numbers tending to zero, $\P$ the
  Wiener measure on $\cC[0,\infty)$
  and $D\subset \cC[0,\infty)$ be a Borel set such that
  $\P{D}>0$.

  Then $\P{\Theta_{x_n}^{-1}( D)\io}=1$.
\end{corollary}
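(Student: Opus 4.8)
The plan is to reduce to \emph{cylinder} events and to exploit two facts: after rescaling by $\Theta_{x_n}$, a cylinder event supported on a fixed time horizon is pushed into an arbitrarily short initial $\sigma$-field, where the Blumenthal $0$--$1$ law forces its rescaled $\limsup$ to have probability $0$ or $1$; and the approximation error incurred in passing to cylinder events can be made summable along a well-chosen subsequence, so that the first Borel--Cantelli lemma removes it. First I would reduce to $x_n>0$ for all $n$: passing to a subsequence only shrinks $\limsup_n\Theta_{x_n}^{-1}(D)$, so it suffices to argue along any subsequence; restricting to one on which $x_n$ has constant sign and using $\Theta_{-x}=R\circ\Theta_{|x|}$ with $R\omega=-\omega$ a $\P$-preserving map, we are reduced to a sequence $\Theta_{|x_n|}^{-1}(D')$ with $D'\in\{D,-D\}$ and $\P{D'}=\P{D}>0$. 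Assume then $x_n>0$, $x_n\to0$, and write $\mathcal F_t=\sigma(\beta_s:0\le s\le t)$; the events in $\bigcup_{T\ge0}\mathcal F_T$ form an algebra generating $\B(\cC[0,\infty))$, and $\bigcap_{t>0}\mathcal F_t$ is $\P$-trivial by Blumenthal.

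For the construction: for each $k\ge1$, by the standard approximation theorem pick $T_k$ and a cylinder event $U_k\in\mathcal F_{T_k}$ with $\P{D\triangle U_k}\le 4^{-k}$, and then pick $n_1<n_2<\cdots$ with $x_{n_k}^2 T_k\le 2^{-k}$ (possible since $x_n\to0$). Put $E_k=\Theta_{x_{n_k}}^{-1}(U_k)$ and $F_k=\Theta_{x_{n_k}}^{-1}(D\triangle U_k)$. Since $(\Theta_{x_{n_k}}\omega)(s)=x_{n_k}^{-1}\omega(x_{n_k}^2 s)$ for $s\le T_k$ depends only on $\omega$ on $[0,x_{n_k}^2 T_k]$, we get $E_k\in\mathcal F_{x_{n_k}^2 T_k}\subseteq\mathcal F_{2^{-k}}$, hence $\bigcup_{k\ge K}E_k\in\mathcal F_{2^{-K}}$ and $\limsup_k E_k\in\bigcap_K\mathcal F_{2^{-K}}=\bigcap_{t>0}\mathcal F_t$; so $\P{\limsup_k E_k}\in\{0,1\}$. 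Since $\Theta$ preserves $\P$ we have $\P{E_k}=\P{U_k}\to\P{D}$, and $\P{\bigcup_{k\ge K}E_k}\ge\P{E_K}$ gives $\P{\limsup_k E_k}=\lim_K\P{\bigcup_{k\ge K}E_k}\ge\P{D}>0$, so $\P{\limsup_k E_k}=1$. On the other hand $\sum_k\P{F_k}=\sum_k\P{D\triangle U_k}<\infty$, so $\P{\limsup_k F_k}=0$ by Borel--Cantelli. Finally $E_k\setminus F_k=\Theta_{x_{n_k}}^{-1}(U_k\cap D)\subseteq\Theta_{x_{n_k}}^{-1}(D)$, whence $\limsup_k\Theta_{x_{n_k}}^{-1}(D)\supseteq\limsup_k(E_k\setminus F_k)\supseteq(\limsup_k E_k)\setminus(\limsup_k F_k)$, a set of probability $\ge 1-0=1$; as it lies inside $\limsup_n\Theta_{x_n}^{-1}(D)$, the statement follows.

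The delicate point, and the reason a one-shot approximation fails, is that $\P{D\triangle U}$ being small gives \emph{no} bound on $\P{\limsup_n\Theta_{x_n}^{-1}(D\triangle U)}$ — this can equal $1$ for an arbitrarily small error set, since the scaled $\limsup$ of any positive-probability cylinder set is $1$. The remedy, and the main thing to get right, is the simultaneous balancing of $x_{n_k}^2 T_k\to0$ (so $E_k$ stays in a short initial $\sigma$-field and Blumenthal applies) with $\sum_k\P{D\triangle U_k}<\infty$ (so Borel--Cantelli kills $\limsup_k F_k$); both are achievable because $T_k$ may be taken as large as one likes \emph{before} choosing $n_k$ large enough. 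The remaining ingredients — the inclusion $\limsup_k A_k\subseteq\limsup_k B_k\cup\limsup_k(A_k\setminus B_k)$ and the measurability bookkeeping for $\Theta_{x_n}^{-1}$ — are routine.
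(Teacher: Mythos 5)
Your proof is correct and is essentially the paper's argument: approximate $D$ by events $U_k$ in finite-horizon $\sigma$-fields with summable symmetric-difference errors, pass to a subsequence so the rescaled horizons $x_{n_k}^2T_k$ shrink to $0$, apply Blumenthal's $0$--$1$ law to the limsup of the rescaled approximants, kill the error terms by Borel--Cantelli, and bound the probability below by $\P{D}>0$ via a Fatou-type inequality. The only cosmetic differences are your (unnecessary but harmless) reduction to $x_n>0$ and your more explicit set-theoretic bookkeeping at the end.
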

\begin{proof}
  Recall that the canonical
  process on $\cC[0,\infty)$ was denoted by $\beta$. We also use the notation 
  $\B_t=\sigma\set{\beta_s}{s\leq t}$.

  We approximate $D$ with $D_n\in \B_{t_n}$ such that $\sum \P{D\triangle
    D_n}<\infty$, where $\triangle$ denotes the symmetric difference
  operator. Passing to a subsequence if necessary, we may assume that  
  $t_nx_n^2\to0$. Then, since $\Theta_{x_n}^{-1} (D_n)\in\B_{t_nx_n^2}$, we have
  that $\event{\Theta_{x_n}^{-1}(D_n),\io}\in\cap_{s>0}\B_s$, and
  the Blumenthal $0-1$ law ensures  that
  $\P{\Theta_{x_n}^{-1}(D_n),\io}\in\smallset{0,1}$. 

  But $\sum \P{\Theta_{x_n}^{-1}(D)\triangle \Theta_{x_n}^{-1}(D_n)}<\infty$ since
  $\Theta_{x_n}$ preserves $\P$. Borel--Cantelli lemma shows that, almost
  surely, $\Theta_{x_n}^{-1}(D)\triangle \Theta_{x_n}^{-1}(D_n)$ occurs for finitely
  many $n$. Hence $\P{\Theta_{x_n}^{-1}(D),\io}\in\smallset{0,1}$.

  Fatou lemma applied to the indicator functions of $\Theta_{x_n}^{-1} (D)^c$
  yields
  \begin{displaymath}
    \P{\Theta_{x_n}^{-1}(D),\io}\geq
    \limsup_{n\to\infty}\P{\Theta_{x_n}^{-1}(D)}=\P{D}>0.
  \end{displaymath}
  Hence $\P{\Theta_{x_n}^{-1}(D),\io}=1$.
\end{proof}

\begin{proof}[Proof of Proposition~\ref{prop:100}]
  We work on the event $\event{X<1}$.
  Set $\xi=(1/X -1)/2$. 
  Then $1<\xi+1  <1/X$
  and  
  \begin{displaymath}
    1< 1+\xi <\limsup_{n\to\infty} \frac{Z_n}{Z_{n+1}}=\limsup_{n\to\infty}
    \frac{Z_n}{\abs*{\bn_1}}. 
  \end{displaymath}
  Hence
  \begin{displaymath}
    \min_{0\leq k<n} \abs*{\bn[k]_1}=Z_n> (1+\xi) \abs*{\bn_1},\quad\text{for
      infinitely many $n$}.
  \end{displaymath}
  Let $n_1<n_2<\ldots$ the enumeration of those indices, and set
  $x_k=\hn[n_k](1)\bn[n_k]_1$ for $k\geq 1$. The inequality
  $\abs*{\bn[n_k]_1}<(1+\xi)^{-1} \abs*{\bn[n_{k-1}]_1}$ shows that $x_k\to0$.

  Call $B$ the Brownian motion defined by $B_t=\beta_{t+1}-\beta_1$ and for
  real numbers $\delta,C>0$  set
  \begin{multline*}
    D(\delta,C)=\set{w\in \cC[0,\infty)}{
      \text{$\sup_{t\leq 2}\abs{w(t)}< 1+\delta$;}\\
      \text{$w+1$ has a zero in $[0,1]$, but no zero in $(1,2]$;
      }\\
      \max_{t\in[\gamma,2]}\abs{w(t)+1}\leq\frac{\delta\wedge C}{2C} 
      \text{, where $\gamma$ is the last zero of $w+1$ in $[0,2]$} 
    }.
  \end{multline*}
  For each $\delta,C>0$ the Wiener measure puts positive, although possibly
  very small, probability on $D(\delta,C)$. 
  Then Corollary~\ref{cor:14} yields that the Brownian motion $B$ takes values
  in the random sets $\Theta_{x_k}^{-1} D(\xi,C)$ for infinitely many $k$ on
  $\event{\xi>0}=\event{X<1}$ almost
  surely; since the random variables $x_k$, $\xi$ are $\B_1$-measurable, and $B$
  is independent of $\B_1$.  

  \smallskip
  For $k\geq 1$ let $\tgamma_k=\gamma_{n_k}(1+x_k^2)$, that is, the
  last zero of $\bn[n_k]$ before $1+x_k^2$ and set
  \begin{displaymath}
    I_k=(\tgamma_k +\tfrac12r_k,\tgamma_k+r_k),\quad\text{where}\quad
    r_k=\zfrac{\xi\wedge C}{C}^2 x_k^2. 
  \end{displaymath}
  This interval is similar to the one used in the proof of
  Proposition~\ref{prop:7new}, but now we use only the  right half of the
  interval $(\tgamma_k,\tgamma_k+r_k)$.  
  
  Next we show that 
  \begin{equation}
    \label{eq:B}
    B\in\Theta_{x_k}^{-1} D(\xi,C),\quad\text{and}\quad 
    s\leq (1+x_k^2)^{-1}   
  \end{equation}
  implies 
  \begin{equation}
    \label{eq:Ik}
    I_k\subset \dtilde{A}(C,s)\cap (1,1+2x_k^2).
  \end{equation}
  By definition
  $r_k/(4x_k^2)$, the ratio of the lengths of $I_k$ and $(1,1+2x_k^2)$, does
  not depend on $k$.   Then the  porosity of $[0,\infty)\setminus
  \dtilde{A}(C,s)$ at 1 follows 
  for almost every point of $\event{X<1}$, as we have seen that 
  \eqref{eq:B} holds for infinitely many $k$ almost surely on $\event{X<1}$.

  So assume that \eqref{eq:B} holds for $k$  at a given $\omega$. 
  The key observations are that then 
  \begin{align}\label{eq:beta B}
    \bn[\ell]_{1+t}-\bn[\ell]_1&=
    \hn[\ell]_1 B_t
    ,
    &&\text{for $0\leq \ell\leq n_k$,  
      $0\leq t\leq 2x_k^2$},\\
    \label{eq:gamma star}
    \gamma_\ell(t)&<1,
    &&\text{for $0\leq \ell<n_k$ and $1\leq t\leq
      1+2x_k^2$},\\
    \label{eq:gamma n_k}
    \gamma_{n_k}(t)&=\tgamma_k>1,
    &&\text{for $t\in[\tgamma_k,1+2x_k^2]$}.
  \end{align}
  First, we prove \eqref{eq:beta B}--\eqref{eq:gamma n_k} and then with their
  help we derive  $I_k\subset \dtilde{A}(C,s)$.

  To get \eqref{eq:beta B} and \eqref{eq:gamma star} we apply Lemma \ref{l1}
  to  $I=[1,1+2x_k^2]$, $n=n_k$ and $a=1$. This can be done since we
  have
  \begin{align}\label{choice of nk}
    \min_{0\leq \ell<n_k}\abs*{\bn[\ell]_1} &>(1+\xi) \abs*{x_k},\quad\text{by the
      choice of $n_k$},\\
    \label{sup beta}
    \max_{t\in[1,1+2x_k^2]} \abs*{\beta_t-\beta_1} &< (1+\xi)\abs*{x_k},\quad
    \text{since $\Theta_{x_k} B\in D(\xi,C)$ by \eqref{eq:B}}.
  \end{align}
  (\ref{l1:it1}) of Lemma \ref{l1} is exactly \eqref{eq:gamma star}, while
  (\ref{l1:it2}) 
  of the same Lemma gives \eqref{eq:beta B} if we note that
  $B_t=\beta_{1+t}-\beta_1$ by definition.

  \eqref{eq:gamma n_k} claims two things: $\bn[n_k]$ has a zero 
  in $(1,1+x_k^2]$, but has no zero in $(1+x_k^2,1+2x_k^2]$.  Write
  \eqref{eq:beta B} with $\ell=n_k$: 
  \begin{equation*}\label{eq:bnk}
    \bn[n_k]_{1+t}=\bn[n_k]_{1}+\hn[n_k](1) B_t=\hn[n_k](1)(x_k+B_t),
    \quad\text{for $0\leq t\leq 2x_k^2$}. 
  \end{equation*}
  Next, we use that $\Theta_{x_k} B\in D(\xi,C)$, whence $1+\Theta_{x_k}B $
  has a zero in $[0,1]$ 
  but no zero in $(1,2]$. Then the relation
  \begin{equation}\label{eq:Theta B}
    x_k\br{1+(\Theta_{x_k} B)_v}=x_k+%
    B_{x_k^2 v}=
    \hn[n_k](1) %
    \bn[n_k]_{1+x_k^2 v}
  \end{equation}
  justifies \eqref{eq:gamma n_k}.

  \bigskip

  To finish the proof,   it remains to show that  $I_k\subset
  \dtilde{A}(C,s)$, since by \eqref{eq:gamma n_k} $\tgamma_k$ the last zero of
  $\bn[n_k]$ before $1+x_k^2$ is greater than 1, so $I_k\subset (1,1+2x_k^2)$
  holds.  

  Fix $t\in I_k$.  We need    to check the next three properties.
  \begin{nlist}
  \item  \textit{$st<\gamma_{n_k}(t)=\gamma_{n_k}^*(t)$.}
    
    \medskip
    
    By \eqref{eq:gamma n_k} $\gamma_{n_k}(t)=\tgamma_k>1$ and by the
    definition of $I_k$ we have 
    $1<\tgamma_k<t<\tgamma_k+r_k\leq \tgamma_k+x_k^2$. 
    Hence, 
    \begin{displaymath}
      \gamma_{n_k}(t)= \tgamma_k > \frac{\tgamma_k}{\tgamma_k+x_k^2}t
      > \frac{1}{1+x_k^2} t\geq   st,
    \end{displaymath}
    where we used $s\leq(1+x_k^2)^{-1}$, the second part of \eqref{eq:B}. 
    
    By \eqref{eq:gamma star}, $\gamma_{n_k}(t)=\gamma_{n_k}^*(t)$, as  $t\in
    I_k\subset[1,1+2x_k^2]$. 

    \bigskip

  \item \textit{$\displaystyle\min_{0\leq \ell<n_k}
      \abs*{\bn[\ell]_{\tgamma_k}}>    C\sqrt{t-\tgamma_k}$.}

    \medskip
    
    Since $x_k=\hn[n_k](1)\bn[n_k]_1$,  
    $\bn[n_k]_{\tgamma_k}=0$ and $\tgamma_k\in[1,1+x_k^2]$, 
    \eqref{eq:beta B} yields 
    \begin{displaymath}
      \max_{0\leq\ell<n_k}\abs*{\bn[\ell]_{\tgamma_k}-\bn[\ell]_1}=
      \abs*{\bn[n_k]_{\tgamma_k}-\bn[n_k]_1}=\abs*{\bn[n_k]_1}=\abs*{x_k}.
    \end{displaymath}
    Then, by the triangle inequality and \eqref{choice of nk} 
    \begin{align*}
      \min_{0\leq\ell<n_k}\abs*{\bn[\ell]_{\tgamma_k}}&\geq
      \min_{0\leq\ell<n_k}\abs*{\bn[\ell]_1}-
      \max_{0\leq\ell<n_k}\abs*{\bn[\ell]_{\tgamma_k}-\bn[\ell]_1}
      \\ &>
      (1+\xi) \abs*{x_k}-\abs*{x_k} 
      = \xi \abs*{x_k}.
    \end{align*}
    On the other hand $\sqrt{t-\tgamma_k}<\sqrt{r_k}\leq \abs*{x_k} \xi/C$,
    hence 
    \begin{displaymath}
      \min_{0\leq\ell<n_k}\abs*{\bn[\ell]_{\tgamma_k}}>
      \xi \abs*{x_k}\geq C\sqrt{t-\tgamma_{k}}.
    \end{displaymath}

    \smallskip

  \item \textit{$\displaystyle\max_{u\in [\tgamma_k,t]} 
      \abs*{\beta_u-\beta_{\tgamma_k}}<
      \sqrt{t-\tgamma_k}$. 
    }

    \medskip

    $1+\Theta_{x_k} B$ has  a zero in
    $[0,1]$ but no zero in $(1,2]$, since $\Theta_{x_k} B\in D(\xi,C)$. Denote
    as above by $\gamma$ its last zero in 
    $[0,1]$. Then by relation \eqref{eq:Theta B} we have that
    $\tgamma_k=1+x_k^2\gamma$ and 
    \begin{displaymath}
      \max_{u\in[\tgamma_k,1+2x_k^2]} \abs*{\bn[n_k]_u}= \abs{x_k}
      \max_{v\in[\gamma,2]}\abs*{1+(\Theta_{x_k}B)_v}\leq \abs{x_k}\frac{\xi\wedge
        C}{2C}= \frac{\sqrt{r_k}}2.
    \end{displaymath}
    Writing \eqref{eq:beta B} with $\ell=n_k$ and using that
    $\bn[n_k]_{\tgamma_k}=0$ and $t<1+2x_k^2$ we obtain 
    \begin{displaymath}
      \max_{u\in[\tgamma_k,t]}\abs*{\beta_u-\beta_{\tgamma_k}}=
      \max_{u\in[\tgamma_k,t]}\abs*{\bn[n_k]_u}\leq 
      \max_{u\in[\tgamma_k,1+2x_k^2]}\abs*{\bn[n_k]_u}\leq \frac{\sqrt{r_k}}2.
    \end{displaymath}

    By the definition of $I_k$ we have $t-\tgamma_k > \tfrac12 r_k$.
    Hence 
    \begin{displaymath}
      \max_{u\in [\tgamma_k,t]}\abs*{\beta_u-\beta_{\tgamma_k}}
      \leq \frac{\sqrt{r_k}}{2}<
      \frac{\sqrt{r_k}}{2}\sqrt{\frac{t-\tgamma_k}{\tfrac12 r_k}}<
      \sqrt{t-\tgamma_k}.\qedhere
    \end{displaymath}  
  \end{nlist}
\end{proof}

\subsection{Proof of Theorem~\ref{thm:XY}}
\label{sec:3.6}
In this subsection we prove the equality of the events
$\event{X=0}$, $\event{Y=\infty}$ and
$\event{1\in A}$ up to null sets, where
\begin{displaymath}
  A=\bigcap_{s\in(0,1)}A(s),\quad\text{with}\quad A(s)=\bigcap_{C>0} A(C,s).
\end{displaymath}
We keep the notation introduced in Propositions~\ref{prop:Y>0} and
\ref{prop:X<1} for $\dtilde{A}_L(s)$, $\dtilde{A}_L$ and $\tilde{A}$.

\smallskip 

Recall that  $\dtilde{A}_L\subset\tilde{A}\subset{A}$ by definition for any
$L\geq1$.  Then by Propositions~\ref{prop:Y>0} and \ref{prop:X<1} we have  
\begin{equation}
  \label{eq:chain}
  \event{X=0}=\event*{1\in\dtilde{A}_L}
  \subset\event*{1\in\tilde{A}}=
  \event{Y=\infty}\subset \event{1\in A}.
\end{equation}

For $C>0$ let 
\begin{equation*}
  \tau_C=\inf\set{t\geq \tfrac12}{\exists n\geq0, \,\bn_t=0\,\min_{0\leq
      k<n}\abs*{\bn[k]_t}\geq C\sqrt{(1-t)\vee0}}.
\end{equation*}
We show below that 
\begin{equation}\label{eq:chain1}
  \event{1\in A}\subset
  \bigcap_{C>0}\event{\tau_C<1},\quad\text{up to null a set},
\end{equation}
and
\begin{equation}
  \label{eq:P=}
  \P{\bigcap_{C>0}\event{\tau_C<1}}\leq\P{X=0}. 
\end{equation}
Then the claim follows by concatenating \eqref{eq:chain} and
\eqref{eq:chain1}, and observing that the largest and the smallest events in
the obtained chain of almost inclusions has the same probability by
\eqref{eq:P=}.  

We start with \eqref{eq:chain1}. 
If $1\in A$ then $1\in A(C,s)$ for every $s\in(0,1)$, especially
for $s_0=\gamma_0\vee 1/2$, where $\gamma_0$ is the last zero of
$\beta$ before 1, we have $1\in A(C,s_0)$. 
Then, by the definition of $A(C,s_0)$ there is an integer $n\geq0$
and a real number $\gamma\in(s_0,1)$ such that $\bn_\gamma=0$ and $\min_{0\leq
  k<n} \abs*{\bn[k]_1}>C\sqrt{1-\gamma}$. The integer $n$ cannot be zero since
$\bn[0]=\beta$ has no zero in $(s_0,1)$. Thus $\tau_C\leq \gamma<1$, which
shows  the inclusion.

Next, we turn to \eqref{eq:chain1}. Fix $C>L\geq1$ and let
\begin{displaymath}
 \gamma=\sup\set{s\in[\tau_C,1]}{\beta_s=\beta_{\tau_C}}. 
\end{displaymath}
Let us show that 
\begin{equation}\label{eq:A(C,L)}
  \event{\tau_C<1\text{ and }%
    \max_{\tau_C\leq t\leq1}\abs*{\beta_t-\beta_{\tau_C}}< %
    L\sqrt{1-\gamma}}
  \subset
  \event{1\in \dtilde{A}_L(C,\tfrac12)}.
\end{equation}
Indeed, on the event on the left hand side of \eqref{eq:A(C,L)} 
there exists  a random index $n$ such that $\bn_{\tau_C}=0$ and
\begin{displaymath}
  \min_{0\leq k\leq n-1}\abs*{\bn[k]_{\tau_C}}\geq 
  C\sqrt{1-\tau_C}> L\sqrt{1-\gamma} > 
  \max_{{\tau_C}\leq t\leq 1}\abs*{\beta_t-\beta_{\tau_C}}.  
\end{displaymath}
Then we can apply Lemma~\ref{l1} with $I=[\tau_C,1]$, $a=\tau_C$ and
$n=n$. We obtain that 
$\bn[k]$ has no zero in $[\tau_C,1]$ for $k=0,\dots,n-1$, and 
the absolute  increments
$\abs*{\bn[k]_t-\bn[k]_{\tau_C}}$,  are the same
for $k=0,\dots,n$
 and $t\in[\tau_C,1]$.
In particular, $\bn[k]_\gamma=\bn[k]_{\tau_C}$ for every $0\leq k\leq n$, 
$\gamma$ is the last zero of $\bn$ in $[\tau_C,1]$ and
$\gamma=\gamma_n=\gamma_n^*$. 
Moreover,
\begin{displaymath}
  \min_{0\leq k< n}\abs*{\bn[k]_{\gamma_n^*}}=\min_{0\leq k< n}\abs*{\bn[k]_{\tau_C}}
  \geq C\sqrt{1-\tau_C}>C\sqrt{1-\gamma_n^*}.
\end{displaymath}
So  $n$ and $\gamma_n^*$ witnesses that $1\in\dtilde{A}_L(C,\frac12)$, since 
we also have that 
\begin{displaymath}
  \max_{t\in[\gamma_n^*,1]}\abs*{\beta_t-\beta_{\gamma_n^*}}\leq
  \max_{t\in[\tau_C,1]}\abs*{\beta_t-\beta_{\tau_C}}< L\sqrt{1-\gamma_n^*}.
\end{displaymath}

\medskip

From \eqref{eq:A(C,L)}, by the strong Markov property and the scaling
invariance of $\beta$, we obtain
\begin{displaymath}
  \P{\tau_C<1}\times  
  \P{\max_{t\in[0,1]}\abs*{\beta_t}\leq L\sqrt{1-\gamma_0}}\leq  
  \P{1\in\dtilde{A}_L(C,\tfrac12)}.
\end{displaymath}
Letting $C$  go to infinity and using Proposition~\ref{prop:X<1}, this yields
\begin{align*}
  \P{\bigcap_{C>0} \event{\tau_C<1}}\times 
    \P{\max_{t\in[0,1]}\abs*{\beta_t}\leq L\sqrt{1-\gamma_0}}&\leq 
  \P{1\in\dtilde{A}_L(\tfrac12)} %
  \\
  &=\P{X=0}. %
\end{align*}
This is true for all $L\geq 1$. Thus \eqref{eq:P=} is obtained by letting $L$ go
to infinity.

\subsection{Proof of Theorem \ref{thm:tightness}} 

\label{sec:3.7}

In this subsection we prove that the tightness of $\set{x\nu(x)}{x\in(0,1)}$
and $\set{nZ_n}{n\geq1}$ are equivalent and both implies $X<1$ almost surely.

\medskip

Fix $K>0$. By definition $\event{(K/n)\nu(K/n)> K}=\event{nZ_{n}\geq K}$ for
any $n\geq1$. For small $x>0$
values there is $n$ such that $K/n<x<2K/n$ and $x\nu(x)\leq (2K/n)\nu(K/n)$
by the monotonicity of $\nu$. But, then
$\event{x\nu(x)>2K}\subset\event{nZ_{n}>K}$. Hence
\begin{displaymath}
  \limsup_{x\to 0^+} \P{x\nu(x)> 2K}\leq \limsup_{n\to\infty}\P{nZ_n\geq K}\leq 
  \limsup_{x\to0^+} \P{x\nu(x)> K}.
\end{displaymath}
So the tightness of the two families are equivalent and it is enough to prove
that when $\set{x\nu(x)}{x\in(0,1)}$ is tight then $X<1$ almost surely.

We have the next easy lemma,
whose proof is sketched at the end of this subsection. %
\begin{lemma}\label{l12}
  \begin{displaymath}
    X=\liminf_{n\to\infty}\frac{Z_{n+1}}{Z_n} = 
    \liminf_{x\to0^+}%
    \frac{\abs*{\bn[\nu(x)]_1}}{x}.    
  \end{displaymath}
\end{lemma}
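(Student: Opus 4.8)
The plan is to rewrite both limits inferior as one and the same $\liminf$ taken over the \emph{record indices} of the sequence $(\abs*{\bn_1})_{n\ge0}$. First I would record a few elementary facts: $(Z_n)_{n\ge1}$ is non-increasing, $Z_n>0$ for every $n$ (since $\bn[k]_1\neq0$ almost surely for each $k$), and $Z_n\to0$ almost surely by Corollary~\ref{cor:11}; consequently $Z_n$ strictly decreases infinitely often. Let $0=m_0<m_1<m_2<\dots$ enumerate the indices $n$ with $\abs*{\bn_1}<Z_n$ (with the convention $Z_0=+\infty$, so that $m_0=0$); the list is infinite by the previous remark. The one bookkeeping fact I would establish is that between two consecutive records the sequence $\abs*{\bn_1}$ never dips below the earlier record; from it one reads off $Z_{m_j}=\abs*{\bn[m_{j-1}]_1}$ for $j\ge1$, $Z_{m_j+1}=\abs*{\bn[m_j]_1}$ for $j\ge0$, while $Z_{n+1}=Z_n$ at every non-record index $n$.

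Granting this, the $X$-side is immediate: the ratio $Z_{n+1}/Z_n$ is $\le1$ for every $n$ and equals $1$ precisely at the non-record indices, so discarding those indices (all equal to the maximal value $1$, infinitely many record terms remaining) leaves the $\liminf$ unchanged, and therefore
\begin{align*}
  X&=\liminf_{n\to\infty}\frac{Z_{n+1}}{Z_n}
   =\liminf_{j\to\infty}\frac{Z_{m_j+1}}{Z_{m_j}}\\
   &=\liminf_{j\to\infty}\frac{\abs*{\bn[m_j]_1}}{\abs*{\bn[m_{j-1}]_1}}.
\end{align*}

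For the $\nu$-side, I would use that the values $\abs*{\bn[m_j]_1}$, $j\ge0$, decrease strictly from $\abs*{\beta_1}$ down to $0$, so that the half-open intervals $J_j=\bigl(\abs*{\bn[m_j]_1},\,\abs*{\bn[m_{j-1}]_1}\bigr]$, $j\ge1$, partition $\bigl(0,\abs*{\beta_1}\bigr]$. For $x\in J_j$ one checks that $\nu(x)=m_j$: if $n<m_j$ then $\abs*{\bn_1}\ge\abs*{\bn[m_{j-1}]_1}\ge x$, whereas $\abs*{\bn[m_j]_1}<x$. Hence on $J_j$ the function $x\mapsto\abs*{\bn[\nu(x)]_1}/x$ equals $\abs*{\bn[m_j]_1}/x$, which is decreasing, with infimum $\abs*{\bn[m_j]_1}/\abs*{\bn[m_{j-1}]_1}$ attained at the right endpoint. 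Since $\abs*{\bn[m_j]_1}\to0$, the $J_j$ exhaust a punctured neighbourhood of $0$; taking the infimum over $x\in(0,\delta)$ and letting $\delta\to0^+$ yields
\begin{displaymath}
  \liminf_{x\to0^+}\frac{\abs*{\bn[\nu(x)]_1}}{x}
  =\liminf_{j\to\infty}\frac{\abs*{\bn[m_j]_1}}{\abs*{\bn[m_{j-1}]_1}},
\end{displaymath}
the same quantity as on the $X$-side, which proves the lemma.

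There is no genuine obstacle here; the statement is pure bookkeeping. The only spots needing a careful line or two are: the infinitude of the record set, resting entirely on $Z_n\downarrow0$ (Corollary~\ref{cor:11}) together with $Z_n>0$; the identity $Z_{m_j}=\abs*{\bn[m_{j-1}]_1}$, i.e.\ that the running minimum is always attained at the last preceding record; and the one-sided limit $x\to0^+$, where one must note that the infimum of $x\mapsto\abs*{\bn[\nu(x)]_1}/x$ over a sub-piece of $J_j$ cannot drop below its endpoint value $\abs*{\bn[m_j]_1}/\abs*{\bn[m_{j-1}]_1}$.
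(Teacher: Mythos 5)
Your argument is correct; every step checks out, including the two delicate points (the identity $Z_{m_j}=\abs*{\bn[m_{j-1}]_1}$ and the handling of the boundary interval when passing to the one-sided limit $x\to0^+$). It is, however, organized differently from the paper's proof. The paper first observes that the value of the path at the hitting index is the new running minimum, and then reduces the whole lemma to a purely deterministic claim: for any non-increasing null sequence $(a_k)$ of positive numbers and $n(x)=\inf\set{k\geq1}{a_k<x}$ one has $\liminf_k a_{k+1}/a_k=\liminf_{x\to0^+}a_{n(x)}/x$. This claim is proved by a two-inequality squeeze, obtaining ``$\leq$'' from $a_{n(x)}/x\geq a_{n(x)}/a_{n(x)-1}$ and ``$\geq$'' by evaluating at the special points $x=a_k$; no record structure is ever made explicit. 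You instead introduce the record indices $m_j$ of $(\abs*{\bn_1})$ and identify \emph{both} sides exactly with the single quantity $\liminf_j \abs*{\bn[m_j]_1}/\abs*{\bn[m_{j-1}]_1}$, partitioning $(0,\abs*{\beta_1}]$ into the intervals on which $\nu$ is constant. The paper's route is shorter and isolates a reusable deterministic lemma; yours is more explicit about where the liminf actually lives (only the record ratios matter), which makes the mechanism transparent and is the version one would want if one later needed to control the record times themselves. Both are complete proofs of the same elementary bookkeeping fact.
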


Then we have that 
\begin{displaymath}
  \I{X> 1-\delta}\leq \liminf_{x\to 0+} \I{\abs*{\bn[\nu(x)]_1}/x> 1-\delta}. 
\end{displaymath}
Hence, by Fatou lemma
\begin{align*}
  \P{X>1-\delta}\leq\liminf_{x\to0+}\P{\abs*{\bn[\nu(x)]_1}>x(1-\delta)}.
\end{align*}
Let $x\in(0,1)$ and $K>0$. Since on the event 
\begin{displaymath}
  \event{\nu(x)\leq \frac Kx}\cap \event{\abs*{\bn[\nu(x)]_1}>x(1-\delta)}
\end{displaymath}
at least one of the standard
normal variables $\bn[k]_1$, $0\leq k\leq K/x$ takes values in a set of size
$2x\delta$, namely in $(-x,-x(1-\delta))\cup (x(1-\delta),x)$, 
\begin{align*}
  \P{\frac{\abs*{\bn[\nu(x)]_1}}x>1-\delta}  \hspace{-2em}&\\&\leq
  \P{\nu(x)>\frac K{x}}+
  \zjel{\frac{K}{x}+1}\P{1-\delta<\frac{\abs{\beta_1}}{x}<1}\\ &\leq 
  \P{x\nu(x)>K}+(K+1)\delta.
\end{align*}
In the last step we used that the standard normal density is bounded by
$1/\sqrt{2\pi}$, whence $\P{1-\delta<\frac{\abs{\beta_1}}{x}<1}\leq \delta x$. 

By the tightness assumption for any $\eps>0$ there exists  $K_\eps$ such that
$\sup_{x\in(0,1)}\P{x\nu(x)>K_\eps}\leq \eps$. 
Hence, 
\begin{displaymath}
  \P{X=1}=\lim_{\delta\to0+} \P{X>1-\delta}\leq 
  \lim_{\delta\to0+}\eps+ (K_\eps+1) \delta=\eps.
\end{displaymath}
Since, this is true for all $\eps>0$, we get that $\P{X=1}=0$ and the proof of
Theorem \ref{thm:tightness} is complete.\qed

\begin{proof}[Proof of Lemma~\ref{l12}] Since
  $Z_{\nu(x)}=\abs*{\bn[\nu(x)]_1}$ Lemma \ref{l12} 
  is a particular case of the following claim: if $(a_n)$ is a decreasing
  sequence of positive numbers tending to zero then  
  \begin{displaymath}
    \liminf_{k\to\infty}\frac{a_{k+1}}{a_k}=\liminf_{x\to0^+}\frac{a_{n(x)}}x,
  \end{displaymath}
  where $n(x)=\inf\set{k\geq 1}{a_k<x}$. First, for  $x<a_1$ the relation 
  $a_{n(x)-1}\geq x>a_{n(x)}$  gives 
  \begin{displaymath}
    \frac{a_{n(x)}}{a_{n(x)-1}}\leq \frac{a_{n(x)}}x
  \end{displaymath}
  and
  \begin{displaymath}
    \liminf_{k\to\infty}\frac{a_{k+1}}{a_k}\leq \liminf_{x\to0^+}\frac{a_{n(x)}}x.  
  \end{displaymath}

  For the opposite direction, for every $k\geq 0$, $a_{n(a_k)}<a_k$, therefore
  $a_{n(a_k)}\leq a_{k+1}$ as $(a_n)$ is non-increasing. Since $a_k\to0$ as
  $k\to\infty$, one gets
  \begin{displaymath}
    \liminf_{x\to 0^+} \frac{a_{n(x)}}x\leq 
    \liminf_{k\to\infty} \frac{a_{n(a_k)}}{a_k}\leq 
    \liminf_{k\to\infty} \frac{a_{k+1}}{a_k}.\qedhere
  \end{displaymath}
\end{proof}

\begin{acknowledgement}
  Crucial part of this work was done while visiting the University of
  Strasbourg, in February of 2011. I am very grateful to IRMA and especially to
  professor Michel \'Emery for their invitation and for their hospitality.

  Conversations with Christophe Leuridan and Jean Brossard, and a few days later
  with Marc Yor and Marc Malric inspired the first formulation of
  Theorem~\ref{prop:7} and \ref{prop:8}.  

  The author is grateful to the anonymous referee for his detailed reports and
  suggestions that improved the presentation of the results
  significantly. Especially, the referee proposed a simpler argument for
  stronger result in Theorem~\ref{thm:tightness}, pointed out a sloppiness in
  the proof of Theorem~\ref{prop:8}, suggested a better
  formulation of Theorems~\ref{prop:7} and \ref{prop:8} and one of his/her
  remarks motivated Theorem~\ref{thm:XY}. 

  The original proof of Corollary \ref{cor:14} was based on the
  Erd\H{o}s--R\'enyi generalization of the Borel--Cantelli lemma, see
  \cite{MR787923}. The somewhat shorter proof in the text was proposed by the
  referee.  
\end{acknowledgement}

\def\doi#1{doi:~\href{http://dx.doi.org/#1}{\nolinkurl{#1}}}
\def\eprint#1{arXiv:~\href{http://arxiv.org/abs/#1}{\nolinkurl{#1}}}

\end{document}